\def\rr{{\mathbb R}}
\def\rn{{{\rr}^n}}
\def\zz{{\mathbb Z}}
\def\nn{{\mathbb N}}
\def\ca{{\mathcal A}}
\def\cb{{\mathcal B}}
\def\cd{{\mathcal D}}
\def\ce{{\mathcal E}}
\def\cf{{\mathcal F}}
\def\cm{{\mathcal M}}
\def\fz{\infty}
\def\az{\alpha}
\def\bz{\beta}
\def\lz{\lambda}
\def\tz{\theta}
\def\vz{\varphi}
\def\lf{\left}
\def\r{\right}
\def\hs{\hspace{0.25cm}}
\def\ls{\lesssim}
\def\gs{\gtrsim}
\def\noz{\nonumber}
\def\wz{\widetilde}
\def\loc{{\mathop\mathrm{\,loc\,}}}
\def\supp{\mathop\mathrm{\,supp\,}}
\def\rect{\mathop\mathrm{rect\,}}
\def\mcm{\mathcal{M}}
\def\mscX{\mathscr{X}}
\def\modp{\textup{Mod}_p}
\def\Modpq{\textup{Mod}_p^q}
\def\modpq{\Modpq}
\def\mpq{\mcm_p^q(\mscX)}
\def\mpsqs{\cm_{p^\ast}^{q^\ast}(\mscX)}
\def\nmpq{NM_p^q(\mscX)}
\def\prenmpq{\widetilde{NM_p^q}(\mscX)}
\def\nmpqrn{NM_p^q(\rn)}
\def\hmpq{HM_p^q(\mscX)}
\def\hmpsqs{HM_{p^\ast}^{q^\ast}(\mscX)}
\def\hmpqrn{HM_p^q(\rn)}
\def\wmpq{WM^q_p(\rn)}
\def\bint{{\ifinner\rlap{\bf\kern.35em--}
\int\else\rlap{\bf\kern.45em--}\int\fi}\ignorespaces}
\def\bbint{{\ifinner\rlap{\bf\kern.35em--}
\hspace{0.078cm}\int\else\rlap{\bf\kern.45em--}\int\fi}\ignorespaces}
\newtheorem{thm}{Theorem}[section]
\newtheorem{lem}[thm]{Lemma}
\newtheorem{cor}[thm]{Corollary}
\newtheorem{prop}[thm]{Proposition}
\theoremstyle{definition}
\newtheorem{rem}[thm]{Remark}
\newtheorem{defn}[thm]{Definition}
\numberwithin{equation}{section}
\numberwithin{equation}{section}
\begin{document}

\arraycolsep=1pt

\title{\bf\Large Morrey-Sobolev Spaces on Metric Measure Spaces
\footnotetext{\hspace{-0.35cm} 2010 \emph{Mathematics Subject Classification}. Primary 46E35; Secondary 42B25,
42B35, 30L99.
\endgraf {\it Key words and phrases}. Sobolev space, Morrey space, upper gradient,
Haj\l asz gradient, metric measure space, maximal operator
\endgraf Dachun Yang is supported by the National
Natural Science Foundation of China (Grant No. 11171027).
Wen Yuan is supported by the National
Natural Science Foundation  of China (Grant No. 11101038)
and the Alexander von Humboldt Foundation. This project is also partially supported
by the Specialized Research Fund for the Doctoral Program of Higher Education
of China (Grant No. 20120003110003) and the Fundamental Research Funds for Central
Universities of China (Grant No. 2012LYB26).
}}
\author{Yufeng Lu, Dachun Yang\footnote{Corresponding
author.}\ \ and Wen Yuan}
\date{ }
\maketitle

\vspace{-0.5cm}

\begin{center}
\begin{minipage}{13cm}
{\small {\bf Abstract}\quad
In this article, the authors introduce the Newton-Morrey-Sobolev
space on a metric measure space  $(\mathscr{X},d,\mu)$.
The embedding of the Newton-Morrey-Sobolev space
into the H\"{o}lder space is obtained if  $\mathscr{X}$
supports a weak Poincar\'{e} inequality and the measure $\mu$
is doubling and satisfies a lower bounded condition.
Moreover, in the Ahlfors $Q$-regular case, a Rellich-Kondrachov
type embedding theorem is also obtained.
Using the Haj\l asz gradient,
the authors also introduce the Haj\l asz-Morrey-Sobolev spaces, and prove
that the Newton-Morrey-Sobolev space coincides with
the Haj\l asz-Morrey-Sobolev space
when $\mu$ is doubling and $\mathscr{X}$ supports a weak Poincar\'{e} inequality.
In particular, on the Euclidean space ${\mathbb R}^n$, the authors obtain the coincidence
among the Newton-Morrey-Sobolev space, the Haj\l asz-Morrey-Sobolev space
and the classical Morrey-Sobolev space.
Finally, when $(\mathscr{X},d)$ is geometrically doubling and $\mu$
a non-negative Radon measure, the boundedness of some modified (fractional) maximal operators on modified
Morrey spaces is presented; as an application, when $\mu$
is doubling and satisfies some measure decay property,
the authors further obtain the boundedness of some
(fractional) maximal operators on Morrey spaces, Newton-Morrey-Sobolev spaces
and Haj\l asz-Morrey-Sobolev spaces.
}
\end{minipage}
\end{center}

\vspace{0.2cm}

\section{Introduction\label{s1}}

\hskip\parindent In 1996, via introducing the notion of Haj\l asz gradients,
Haj\l asz \cite{ha} obtained an equivalent
characterization of the classical Sobolev space on $\rr^n$, which
becomes an effective way to define Sobolev spaces on metric spaces.
From then on, several different approaches to introduce Sobolev spaces on
metric measure spaces were developed;
see, for example, \cite{km,fhk,Sh,hk00,h03,hu03,y03,ks}.

Throughout the paper, $(\mscX,d,\mu)$ denotes a metric measure space
with a \emph{non-trivial Borel regular measure} $\mu$, which
is finite on bounded sets and positive on open sets.
Let $f$ be a measurable function on $\mscX$. Recall that
a non-negative function $g$ on $\mscX$ is called a
\textit{Haj\l asz gradient of $f$}
if there exists a set $E\subset X$ such that $\mu(E)=0$
and, for all $x,\ y\in \mscX\setminus E$,
\begin{equation*}
|f(x)-f(y)|\leq d(x,y)[g(x)+g(y)].
\end{equation*}
The \textit{Haj\l asz-Sobolev space} $M^{1,p}(\mscX)$ with $p\in[1,\fz]$ is
then defined to be the space of all measurable functions $f\in L^p(\mscX)$ which have
Haj\l asz gradients $g\in L^p(\mscX)$. The \emph{norm} of this space is defined by
$$\|f\|_{M^{1,p}(\mscX)}:= \|f\|_{L^p(\mscX)}+\inf\|g\|_{L^p(\mscX)},$$
where the infimum is taken over all Haj\l asz gradients $g$ of $f$.
It was proved in \cite{ha} that, when $\mscX=\rn$ and $p\in(1,\fz],$
$M^{1,p}(\rn)$ coincides with the classical Sobolev space $W^{1,p}(\rn)$.

Over a decade ago, based on the notions of upper
gradients and weak upper gradients, Shanmugalingam  \cite{Sh,s01}
introduced another type of Sobolev spaces on metric measure spaces,
which are called Newtonian spaces or Newton-Sobolev spaces.
These spaces were also proved to coincide with the Haj\l asz-Sobolev spaces
if $\mscX$ supports some Poincar\'e inequality and the measure is doubling.
Now we recall their definitions.

Recall that we call
$\gamma$ a \emph{curve} if it is a continuous
mapping from an interval into $\mscX$. A curve $\gamma$ is said to be
\emph{rectifiable} if its length is finite. All rectifiable curve can be
arc-length parameterized. Without loss of generality, we may assume that
\emph{all curves appearing in this article are
always treated as arc-length parameterized}.

Let $p\in[1,\infty)$ and $\Gamma$
be a family of non-constant
rectifiable curves on $\mscX$. Recall that the
\emph{admissible class} $F(\Gamma)$ for $\Gamma$ is defined by
\begin{equation}\label{f_gamma}
F(\Gamma):=\lf\{\rho\in[0,\infty]:\ \rho\textup{ is Borel measurable and }
\int_\gamma\rho(s)\,ds\geq1\textup{ for all }\gamma\in\Gamma\r\}.
\end{equation}
If $\Gamma$ contains a constant curve, then $F(\Gamma)=\emptyset$.
The \emph{$p$-modulus} of $\Gamma$ is then defined by
$$\modp(\Gamma):=\inf_{\rho\in F(\Gamma)}\|\rho\|_{L^p(\mscX)}^p,$$
where the infimum is taken over all admissible functions $\rho$ in $F(\Gamma)$.
We let the infimum over the empty set always be infinity.
Let $f$ be a measurable function on $\mscX$. A non-negative function
$g$ is called an \emph{upper gradient} of $f$ if, for any curve
$\gamma\in \Gamma_{\rect}$,
\begin{equation}\label{ineq upper gradient}
|f\circ\gamma(0)-f\circ\gamma(l(\gamma))|\leq\int_\gamma g(s)\,ds,
\end{equation}
where $\Gamma_{\rect}$ is the \emph{class of all non-constant rectifiable
curves in $\mscX$}. Moreover, if the inequality
\eqref{ineq upper gradient} holds for all the curves except for a family of
curves of $p$-modulus zero, then we call $g$ a \emph{$p$-weak upper
gradient} of $f$.
The notion of $p$-weak upper gradient was introduced by Heinonen and Koskela
in \cite{hk}; see also \cite{he} and \cite{Sh,s01}.

For all  $p\in[1,\fz)$, denote by the \emph{symbol} $\wz{N}^{1,p}(\mscX)$
the space of all measurable functions $f\in L^p(\mscX)$ which have
$p$-weak upper gradients $g\in L^p(\mscX)$ and, for all $f\in \wz{N}^{1,p}
(\mscX)$, let
$$
\|f\|_{\wz{N}^{1,p}(\mscX)}:= \|f\|_{L^p(\mscX)}+\inf\|g\|_{L^p(\mscX)},
$$
where the infimum is taken over all $p$-weak upper gradients $g$ of $f$.
The \textit{Newton-Sobolev space} ${N}^{1,p}(\mscX)$ is then defined to be the
quotient space ${N}^{1,p}(\mscX):=\wz{N}^{1,p}(\mscX)/\sim$ with the norm
$\|\cdot\|_{{N}^{1,p}(\mscX)}:=\|\cdot\|_{\wz{N}^{1,p}(\mscX)}$, where
$\sim$ is an \emph{equivalence relation} defined by setting, for all $f_1,f_2\in \wz{N}^{1,p}(\mscX)$,
$f_1\sim f_2$ if $\|f_1-f_2\|_{\wz{N}^{1,p}(\mscX)}=0$. It was proved in
\cite[Theorem 4.9]{Sh}
that the Newton-Sobolev space coincides with the Haj\l asz-Sobolev space
if $(X,\mu)$ supports some Poincar\'e inequality and the measure $\mu$ is doubling.
We refer the reader to \cite{Sh, he, bbs, gks, bb} for more properties about
these spaces.

Recently, there were some attempts to study Newtonian type spaces
in more general settings. Durand-Cartagena in \cite{Du} introduced and
studied the Newtonian space $N^{1,\infty}(\mscX)$ in the limit case
$p=\fz$.
Tuominen \cite{Tu} considered Newtonian type spaces
associated with Orlicz spaces by replacing the Lebesgue norm in the
definition of ${N}^{1,p}(\mscX)$ with Orlicz norms.
Using Lorentz spaces instead of
Lebesgue spaces, Costea and Miranda \cite{CJ} introduced Newtonian type spaces related
to Lorentz spaces. Mal\'{y} \cite{maly1442,maly1448}
studied the Newtonian type spaces  associated with
a general \emph{quasi-Banach function lattice} $X$, namely,
a quasi-Banach function space $X$ satisfying that, if $f\in X$
and $|g|\le |f|$ almost everywhere, then $g\in X$ and $\|g\|_X\le \|f\|_X$.

Let $0<p\le q\le \fz$. Recall that the \emph{Morrey space}
$\mcm_p^q(\mscX)$ (see \cite{morrey})  is defined to be
the space of all measurable functions $f$ on $\mscX$ such that
\begin{equation}\label{a}
\left\|f\right\|_{\mcm_p^q(\mscX)}:=\sup_{B\subset\mscX}
[\mu(B)]^{1/q-1/p}\lf[\int_B|f(x)|^p\,d\mu(x)\r]^{1/p}<\fz,
\end{equation}
where the supremum is taken over all balls in $\mscX$.
In recent years, Morrey spaces and the Morrey versions of many classical
function spaces such as Hardy spaces and
Besov spaces, namely, the spaces defined via replacing
Lebesgue norms by Morrey norms in their norms,
attract more and more attentions and have proved useful
in the study of partial differential equations and harmonic analysis; see, for
example, \cite{ad04,ad11,ad12,ad,n06,mnos10,
koya94,m03,ysy10} and their references.

The main purpose of this article is to develop a
theory of  Newtonian type spaces based on Morrey spaces, namely,
Newton-Morrey-Sobolev spaces, as well as
the Haj\l asz-Morrey-Sobolev spaces on metric measure spaces.

We begin with the following generalized modulus based on Morrey spaces.

\begin{defn}\label{def mod}
Let $1\le p\le q<\fz$ and
$\Gamma$ be a collection of rectifiable curves.
The \textit{Morrey-modulus} of $\Gamma$
is defined by
\begin{equation*}
\Modpq(\Gamma):=\inf_{\rho\in F(\Gamma)}\|\rho\|_{\mpq}^p,
\end{equation*}
where $F(\Gamma)$ is defined as in \eqref{f_gamma}.
\end{defn}

\begin{defn}
Let $f$ be a measurable function and $g$ a non-negative
Borel measurable function.
If the inequality
\eqref{ineq upper gradient} holds true for all non-constant rectifiable
curves in $\mscX$ except a family of
curves of Morrey-modulus zero, then
$g$ is called a \textit{$\Modpq$-weak upper gradient} of $f$.
\end{defn}

Via these $\Modpq$-weak upper gradients, the
Newton-Morrey-Sobolev space is introduced as follows.

\begin{defn}\label{def nm space}
Let $1\le p\le q<\fz$. The \emph{space}
$\widetilde{NM_p^q}(\mscX)$ is defined to be
the set of all $\mu$-measurable functions $f$ such that $\|f\|_{\prenmpq}<\infty$,
where
\begin{equation*}
\|f\|_{\widetilde{NM_p^q}(\mscX)}:=\|f\|_{\mpq}+\inf\|g\|_{\mpq}
\end{equation*}
with the infimum being taken over all $\Modpq$-weak upper gradients $g$
of $f$.
The \emph{Newton-Morrey-Sobolev space} $\nmpq$ is then
defined as the quotient space
$$\prenmpq\big/\left\{f\in \widetilde{NM_p^q}(\mscX):\
\|f\|_{\widetilde{NM_p^q}(\mscX)}=0\right\}$$ with
\begin{equation*}
\|f\|_{\nmpq}:=\|f\|_{\prenmpq}.
\end{equation*}
\end{defn}

It is easy to see that $\|\cdot\|_{\nmpq}$ is a norm. Moreover, when $p=q$,
the space $\nmpq$ is just the Newton-Sobolev space $N^{1,p}(\mscX)$
introduced by Shanmugalingam \cite{Sh}.
We also remark that, since Morrey spaces are Banach function lattices, these
Newton-Morrey-Sobolev spaces are special cases of the
Newtonian type spaces associated with
quasi-Banach function lattices considered by Mal\'{y}
\cite{maly1442,maly1448}.

This article is organized as follows. In Section
\ref{s2}, we show that the Newton-Morrey-Sobolev space is non-trivial by proving
that the set of Lipschitz functions with bounded support is
contained in the Newton-Morrey-Sobolev space $\nmpq$ (see Theorem \ref{ne} below),
but not dense in some examples (see Remark \ref{not dense} below),
which is different from the Newton-Sobolev space. Moreover, in Remark \ref{rd}
below, we even show that the set of Lipschitz functions is not dense in
$NM_p^q(\rn)$ when $1<p<q<\fz$.

In Section \ref{s3}, the embedding of the Newton-Morrey-Sobolev space
into the H\"{o}lder space is obtained
when $\mscX$ supports a weak
Poincar\'{e} inequality, the measure $\mu$ is doubling and satisfies
a lower bounded condition (see Theorem \ref{t-emd} below).
Moreover, if the space
$\mscX$ is Ahlfors $Q$-regular and supports a weak
Poincar\'{e} inequality,
via proving the boundedness of some
fractional integrals on Morrey spaces,
we also obtain a Rellich-Kondrachov
type embedding theorem of the Newton-Morrey-Sobolev space
(see Theorem \ref{thm-rk-embd} below). Both embedding properties
on Newton-Morrey-Sobolev spaces generalize the corresponding results for
Newton-Sobolev spaces obtained by Shanmugalingam in \cite[Theorems 5.1 and 5.2]{Sh}.

In Section \ref{s4}, using the
Haj\l asz gradient, we introduce the Haj\l asz-Morrey-Sobolev space
on metric measure spaces and show that,
when $\mscX$ supports a weak Poincar\'{e} inequality and the measure $\mu$
is doubling,
the Newton-Morrey-Sobolev space coincides with
the Haj\l asz-Morrey-Sobolev space (see Theorem \ref{thm-h=n} below).
This generalizes the result on the relation between
Newton-Sobolev spaces and Haj\l asz-Sobolev spaces obtained by Shanmugalingam
in \cite[Theorem 4.9]{Sh}. In particular, when $\mscX=\rn$ and $1<p\le q<\fz$,
both the Newton-Morrey-Sobolev space $\nmpqrn$ and the
Haj\l asz-Morrey-Sobolev space $\hmpqrn$
are proved to coincide with the classical
Morrey-Sobolev space on $\rn$ (see Theorem \ref{t3} below).

Finally, Section \ref{s6} is devoted to the boundedness of some fractional maximal operators
on Morrey and Morrey-Sobolev spaces. We first show, in Subsection \ref{s6.1},
the boundedness of some modified maximal operators
on modified Morrey spaces over geometrically doubling metric measure spaces
(see Theorem \ref{t6.5} below). As an application,
the boundedness of related fractional maximal operators on modified Morrey spaces is obtained
(see Proposition \ref{pf} below). As further applications,
in Subsection \ref{s6.2}, we show the boundedness of
(fractional) maximal operators on Haj\l asz-Morrey-Sobolev spaces when $\mscX$
is a doubling metric measure space satisfying the  relative
$1$-annular decay property and the measure  lower bound condition
(see Theorem \ref{tf} below). If $\mscX$ supports a weak Poincar\'{e}-inequality,
and the measure is doubling and satisfies the
measure  lower bound condition, then the boundedness of discrete
(fractional) maximal operators on Newton-Morrey-Sobolev spaces is also
obtained (see Theorem \ref{thm-frac-max-bd1} below).
All these conclusions generalize the corresponding known results on Newton-Sobolev spaces
and Haj\l asz-Sobolev spaces by Heikkinen et al. in \cite{hknt,hlnt}.

At the end of this section, we make some conventions on notation.
Throughout the paper, we denote by $C$ a \emph{positive constant}
which is independent of the main parameters, but it may vary from
line to line. The \emph{symbols} $A\lesssim B$ and $A\gs B$
means $A\leq CB$ and $A\ge CB$, respectively,
where $C$ is a positive constant. If $A\lesssim B$ and
$B\lesssim A$, then we write $A\approx B$. If $E$ is a subset of
$\mscX$, we denote by $\chi_E$ its \emph{characteristic function}.

\section{Some basic properties\label{s2}}

\hskip\parindent In this section, we consider some basic properties of
Newton-Morrey-Sobolev spaces including their completeness and non-triviality.
Throughout this section, we \emph{only assume that $\mu$ is a non-trivial Borel
regular measure}.

Recall that the Newton-Morrey-Sobolev space is a special case of
the Newtonian spaces based on quasi-Banach function lattice $X$
introduced in \cite{maly1442}. The following result is a special case
of  \cite[Theorem 7.1]{maly1442}.

\begin{thm}[]\label{nmpq complete}
For all  $1\le p\le q<\fz$, the space $\nmpq$ is a Banach space.
\end{thm}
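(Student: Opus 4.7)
The plan is to reduce the statement to \cite[Theorem 7.1]{maly1442}, whose only hypothesis is that the underlying ambient space be a quasi-Banach function lattice with the Fatou property. Everything therefore rests on verifying that $\mpq$ fits into Mal\'y's framework, after which the completeness of $\nmpq$ is a direct quotation.

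First I would check that $\mpq$ is a Banach function lattice. The norm axioms for \eqref{a} are routine. For the lattice property, observe that if $|h|\le|f|$ a.e. and $f\in\mpq$, then, on every ball $B$, $\|h\|_{L^p(B)}\le\|f\|_{L^p(B)}$, so the weighted supremum defining $\|\cdot\|_{\mpq}$ preserves the inequality; hence $\|h\|_{\mpq}\le\|f\|_{\mpq}$. The Fatou property is similarly inherited from that of $L^p(B)$: if $0\le f_n\uparrow f$ a.e., monotone convergence on each $B$ gives $[\mu(B)]^{1/q-1/p}\|f_n\|_{L^p(B)}\uparrow[\mu(B)]^{1/q-1/p}\|f\|_{L^p(B)}$, and taking the supremum over $B$ yields $\|f_n\|_{\mpq}\uparrow\|f\|_{\mpq}$. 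Completeness of $\mpq$ can be proved by the standard telescoping argument: given a Cauchy sequence, pass to a subsequence whose consecutive differences have summable Morrey norms, show the resulting series converges $\mu$-a.e. by the lattice and Fatou properties, and check that the pointwise limit lies in $\mpq$ with the claimed norm convergence.

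Once $\mpq$ is identified as a Banach function lattice with the Fatou property, \cite[Theorem 7.1]{maly1442} applies verbatim and yields that $\nmpq$ is a Banach space. The heart of that general theorem is a Fuglede-type lemma for the lattice-modulus $\Modpq$: if $\rho_k\to\rho$ in $\mpq$, then a subsequence satisfies $\int_\gamma|\rho_{k_j}-\rho|\,ds\to 0$ for all curves $\gamma$ outside a family of $\Modpq$ zero. Given a Cauchy sequence $\{f_n\}\subset\nmpq$ with (almost-minimal) $\Modpq$-weak upper gradients $g_n$, one extracts a subsequence so that $f_n\to f$ and $g_n\to g$ in $\mpq$; the Fuglede lemma then lets one pass to the limit in \eqref{ineq upper gradient} outside an exceptional curve family of $\Modpq$ zero, identifying $g$ as a $\Modpq$-weak upper gradient of $f$ and giving convergence in the Newton-Morrey norm.

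The only step that could cause trouble in a self-contained treatment would be the Fuglede lemma for $\Modpq$, since the usual $L^p$ argument uses Chebyshev's inequality on $\|\cdot\|_{L^p}$ directly. However, Mal\'y's proof handles precisely this issue at the level of an abstract lattice norm (exploiting the Fatou property to pass a summable-in-norm series through the curve integral), so no separate argument is needed here.
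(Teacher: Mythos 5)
Your proposal is correct and follows essentially the same route as the paper: the paper proves Theorem \ref{nmpq complete} simply by observing (as noted in Section \ref{s1}) that $\mpq$ is a Banach function lattice, so that $\nmpq$ is a special case of Mal\'y's Newtonian spaces and completeness is exactly \cite[Theorem 7.1]{maly1442}. Your additional verification of the lattice and Fatou properties of $\mpq$, and your sketch of the Fuglede-type argument inside Mal\'y's proof, are sound elaborations of details the paper leaves to the citation.
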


The next lemma is usually called the \emph{truncation lemma}, which
shows how a $\Modpq$-weak upper gradient behaves when multiplying a
characteristic function. Its proof is similar to those of
\cite[Lemmas 4.6 and 4.7]{CJ}, the details being omitted.

\begin{lem}\label{truncation lemma}
Let $f\in\nmpq$ and $g_1,g_2\in \mpq$ be two
$\modpq$-weak upper gradients of $f$.

{\rm(i)} If $f$ is a constant on a closed set $E$, then $g:=g_1
\chi_{\mscX\setminus E}$ is also a $\modpq$-weak upper gradient of $f$.

{\rm(ii)} If $E$ is closed in $\mscX$, then
$$h:=g_1\chi_E+g_2\chi_{\mscX\setminus E}$$
is also a $\modpq$-weak upper gradient of $f$.
\end{lem}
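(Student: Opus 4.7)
The plan is to follow the pattern of the classical truncation lemma for upper gradients (as in Shanmugalingam \cite{Sh} or its Lorentz-space analogue in \cite[Lemmas 4.6 and 4.7]{CJ}), with the ordinary $p$-modulus replaced throughout by the Morrey-modulus $\Modpq$ from Definition \ref{def mod}. The strategy is to isolate an exceptional family $\Gamma_0$ of non-constant rectifiable curves, outside of which both $g_1$ and $g_2$ act as genuine upper gradients of $f$ along every subcurve and have finite line integrals, and then verify the desired upper gradient inequality for $g$ (resp.\ $h$) pointwise along any $\gamma\notin\Gamma_0$. Concretely, for $j\in\{1,2\}$ I let $\Gamma_j$ be the family of non-constant rectifiable curves $\gamma$ for which either $\int_\gamma g_j\,ds=\infty$ or \eqref{ineq upper gradient} fails for $g_j$ along some subcurve of $\gamma$; a standard saturation argument together with the countable subadditivity of $\Modpq$ yields $\Modpq(\Gamma_0)=0$ for $\Gamma_0:=\Gamma_1\cup\Gamma_2$. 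Fix an arc-length parameterized $\gamma\notin\Gamma_0$ on $[0,l(\gamma)]$.

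For part (i), if $\gamma([0,l(\gamma)])\cap E=\emptyset$ then $g\circ\gamma=g_1\circ\gamma$ along $\gamma$ and the conclusion is immediate. Otherwise, since $E$ is closed and $\gamma$ is continuous, $\gamma^{-1}(E)$ is a nonempty compact subset of $[0,l(\gamma)]$; let $a:=\min\gamma^{-1}(E)$ and $b:=\max\gamma^{-1}(E)$. Because $\gamma(a),\gamma(b)\in E$, the constancy of $f$ on $E$ gives $f(\gamma(a))=f(\gamma(b))$, and the triangle inequality reduces $|f\circ\gamma(0)-f\circ\gamma(l(\gamma))|$ to the sum of the increments of $f$ along the two subcurves $\gamma|_{[0,a]}$ and $\gamma|_{[b,l(\gamma)]}$, both of which meet $E$ only at a single endpoint. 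Applying the upper gradient inequality for $g_1$ on each subcurve and noting that $g_1\chi_E\circ\gamma=0$ a.e.\ on the corresponding subintervals, I replace $g_1$ by $g=g_1\chi_{\mscX\setminus E}$ in the resulting integrals and conclude by disjointness that $|f\circ\gamma(0)-f\circ\gamma(l(\gamma))|\le\int_\gamma g\,ds$.

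For part (ii), the slickest route is via absolute continuity. Setting $F(t):=f(\gamma(t))$, the upper gradient inequalities
\begin{equation*}
|F(s)-F(t)|\le\int_s^t g_j(\gamma(u))\,du\quad\text{for }0\le s\le t\le l(\gamma),\ j\in\{1,2\},
\end{equation*}
together with $g_j\circ\gamma\in L^1([0,l(\gamma)])$, force $F$ to be absolutely continuous on $[0,l(\gamma)]$, hence differentiable a.e.\ with $|F'(u)|\le g_j(\gamma(u))$ a.e.\ for $j=1,2$. Consequently,
\begin{equation*}
|F'(u)|\le g_1(\gamma(u))\chi_E(\gamma(u))+g_2(\gamma(u))\chi_{\mscX\setminus E}(\gamma(u))=h(\gamma(u))\quad\text{a.e.},
\end{equation*}
and integration from $0$ to $l(\gamma)$ yields the desired inequality.

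The main obstacle I foresee is the opening step, namely confirming that the saturated family $\Gamma_0$ has Morrey-modulus zero. The saturation part is purely formal: for any subcurve $\gamma'$ of $\gamma$ and any $\rho\in F(\Gamma_j)$, nonnegativity of $\rho$ gives $\int_\gamma\rho\ge\int_{\gamma'}\rho\ge 1$, so admissibility passes to the enlargement and the infimum defining $\Modpq$ does not increase; combined with the countable subadditivity of $\Modpq$ (a consequence of the pointwise bound $(\sup_n\rho_n)^p\le\sum_n\rho_n^p$), this delivers $\Modpq(\Gamma_0)=0$. Beyond these checks, no property of Morrey spaces other than Definition \ref{def mod} enters the argument, which is precisely why the proofs of \cite[Lemmas 4.6 and 4.7]{CJ} transfer with only notational changes.
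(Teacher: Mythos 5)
Your proposal is correct and takes essentially the same route as the paper, which omits the details of Lemma \ref{truncation lemma} and simply points to \cite[Lemmas 4.6 and 4.7]{CJ}: you carry out exactly that adaptation, and the only genuinely Morrey-specific ingredients --- countable subadditivity of $\Modpq$ and the fact that $\{\gamma:\ \int_\gamma g\,ds=\fz\}$ has Morrey-modulus zero via the admissible functions $g/n$ --- are verified correctly, since both use only the norm and lattice properties of $\mpq$. The remaining steps (saturation under subcurves, the first/last hitting times of the closed set $E$ in part (i), and the absolute continuity of $f\circ\gamma$ with $|(f\circ\gamma)'|\le\min\{g_1,g_2\}\circ\gamma\le h\circ\gamma$ a.e.\ in part (ii)) are the standard arguments the paper intends.
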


We also need the following conclusion.

\begin{prop}\label{mor}
Let $1\le p\le q<\fz$. For any set $E\subset\mscX$ with
finite measure, $\|\chi_E\|_{\mpq}$ is bounded by a positive
constant multiple of $[\mu(E)]^{1/q}$ with the positive
constant independent of $E$.
\end{prop}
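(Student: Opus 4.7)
The plan is to unpack the definition of the Morrey norm for the indicator function and split the supremum into two regimes according to whether the ball $B$ is larger or smaller (in measure) than $E$. Concretely, for any ball $B\subset\mscX$,
\begin{equation*}
[\mu(B)]^{1/q-1/p}\lf[\int_B |\chi_E(x)|^p\,d\mu(x)\r]^{1/p}
=[\mu(B)]^{1/q-1/p}[\mu(B\cap E)]^{1/p},
\end{equation*}
so using $\mu(B\cap E)\le \min\{\mu(B),\mu(E)\}$ I want to show the right-hand side is always at most $[\mu(E)]^{1/q}$.

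The first case is $\mu(B)\le \mu(E)$: then $\mu(B\cap E)\le \mu(B)$, whence the product above is at most $[\mu(B)]^{1/q}\le [\mu(E)]^{1/q}$. The second case is $\mu(B)>\mu(E)$: then $\mu(B\cap E)\le \mu(E)$, and since $1/q-1/p\le 0$ (this uses the assumption $p\le q$) the factor $[\mu(B)]^{1/q-1/p}$ is at most $[\mu(E)]^{1/q-1/p}$, so the product is at most $[\mu(E)]^{1/q-1/p}\cdot [\mu(E)]^{1/p}=[\mu(E)]^{1/q}$. Taking the supremum over all balls yields $\|\chi_E\|_{\mpq}\le [\mu(E)]^{1/q}$, which is the desired bound with constant $1$.

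There is no serious obstacle here; the only subtle point is that the hypothesis $p\le q$ is what makes the exponent $1/q-1/p$ nonpositive and thus allows us to exploit $\mu(B)>\mu(E)$ in the second case. If $E$ has infinite measure, the bound is vacuous; if $\mu(E)=0$, both sides are zero. No properties of $\mu$ beyond the standing assumption that it is a non-trivial Borel regular measure are needed, and the ball $B$ need not contain $E$.
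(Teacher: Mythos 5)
Your proof is correct and follows essentially the same route as the paper's: both split the supremum over balls according to the size of $\mu(B)$ relative to $\mu(E)$ (the paper uses the threshold $\mu(E)/2$ rather than $\mu(E)$, which is immaterial) and exploit $\mu(B\cap E)\le\min\{\mu(B),\mu(E)\}$ together with $1/q-1/p\le 0$. Your version is marginally sharper in that it yields the explicit constant $1$, whereas the paper only records the bound up to a positive constant multiple.
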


\begin{proof}
Notice that
\begin{eqnarray*}
\|\chi_{E}\|_{\mpq}=\sup_{B\subset \mscX} [\mu(B)]^{1/q}
\lf[\frac{\mu(B\cap E)}{\mu(B)}\r]^{1/p}.
\end{eqnarray*}
If $\mu(B)\ge \mu(E)/2$, then by $p\le q$, we have
$$[\mu(B)]^{1/q}
\lf[\frac{\mu(B\cap E)}{\mu(B)}\r]^{1/p}\ls [\mu(E)]^{1/q-1/p}
[\mu(B\cap E)]^{1/p}\ls [\mu(E)]^{1/q}.$$
If $\mu(B)\le \mu(E)/2$, then
$$[\mu(B)]^{1/q}\lf[\frac{\mu(B\cap E)}{\mu(B)}\r]^{1/p}\ls [\mu(E)]^{1/q}.$$
This finishes the proof of Proposition \ref{mor}.
\end{proof}

Recall that $NM_p^p(\mscX)=N^{1,p}(\mscX)$, which is a non-trivial space,
namely, the space $N^{1,p}(\mscX)$ contains
more than just the zero function and might be a proper subspace of $L^p(\mscX)$
if $\mscX$ has enough rectifiable paths (see \cite{Sh}).
The following conclusion shows that, even when $q>p\geq1$, $NM_p^q(\mscX)$ is also
a non-trivial space. In what follows, $\textup{Lip}_b(\mscX)$
denotes the \emph{set of all Lipschitz functions on $\mscX$ with bounded support}.

\begin{thm}\label{ne}
Let $1\le p\le q<\fz$. Then,
$$\textup{Lip}_b(\mscX)\subset\nmpq \subset {N}_{\rm loc}^{1,p}(\mscX),$$
where ${N}_{\rm loc}^{1,p}(\mscX)$
denotes the collection of functions which belong to $N^{1,p}(B)$ for any
ball $B\subset\mscX$.
\end{thm}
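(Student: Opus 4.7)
The plan is to prove the two inclusions separately, in both cases relying on Proposition \ref{mor} and the elementary comparison
\begin{equation*}
\|h\|_{L^p(B)}\le[\mu(B)]^{1/p-1/q}\|h\|_{\mpq},
\end{equation*}
valid for any $h\in\mpq$ and any ball $B\subset\mscX$ and finite since $p\le q$ and $\mu(B)<\fz$. This inequality follows immediately from testing the definition of $\|\cdot\|_{\mpq}$ on the single ball $B$.

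For the first inclusion, let $f\in\textup{Lip}_b(\mscX)$ have Lipschitz constant $L$ and support contained in some open ball $B_0$. I would show directly that $g:=L\chi_{\overline{B_0}}$ is an upper gradient of $f$, hence in particular a $\modpq$-weak upper gradient. Given a rectifiable curve $\gamma\colon[0,l]\to\mscX$ parameterized by arc-length, the composition $f\circ\gamma$ is $L$-Lipschitz on $[0,l]$ and therefore absolutely continuous. Because $f$ vanishes on the open set $\mscX\setminus\overline{B_0}$, continuity of $\gamma$ implies that $f\circ\gamma$ is locally constant near every $t$ with $\gamma(t)\notin\overline{B_0}$, so $(f\circ\gamma)'(t)=0$ at each such $t$; consequently
\begin{equation*}
|f(\gamma(0))-f(\gamma(l))|\le\int_0^l|(f\circ\gamma)'(t)|\,dt\le L\,\lf|\{t\in[0,l]:\gamma(t)\in\overline{B_0}\}\r|=\int_\gamma g\,ds.
\end{equation*}
Proposition \ref{mor} then yields $\|g\|_{\mpq}\ls L[\mu(\overline{B_0})]^{1/q}<\fz$, and since $|f|\le\|f\|_{L^\fz(\mscX)}\chi_{\overline{B_0}}$, the same proposition gives $\|f\|_{\mpq}<\fz$; hence $f\in\nmpq$.

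For the second inclusion, fix $f\in\nmpq$, a $\modpq$-weak upper gradient $g\in\mpq$ of $f$, and an arbitrary ball $B\subset\mscX$. The norm comparison above places $f|_B$ and $g|_B$ in $L^p(B)$, so it remains to check that $g|_B$ is a $p$-weak upper gradient of $f|_B$ on the metric measure space $(B,d,\mu|_B)$. Let $\Gamma_0$ denote the exceptional family in $\mscX$ on which the upper gradient inequality for the pair $(f,g)$ fails, so $\modpq(\Gamma_0)=0$. For each $\ez>0$, pick $\rho_\ez\in F(\Gamma_0)$ with $\|\rho_\ez\|_{\mpq}<\ez$; the restriction $\rho_\ez|_B$ is then admissible for the subfamily $\Gamma_0^B:=\{\gamma\in\Gamma_0:\gamma\subset B\}$ viewed as curves in $B$, and the norm comparison gives $\|\rho_\ez|_B\|_{L^p(B)}\le[\mu(B)]^{1/p-1/q}\ez\to0$ as $\ez\to0$. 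Hence the $p$-modulus of $\Gamma_0^B$ in $B$ vanishes, so the upper gradient inequality for $(f|_B,g|_B)$ holds off a $p$-modulus-zero family of curves in $B$, giving $f\in N^{1,p}(B)$ for every ball $B$.

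The main subtlety I expect lies in the first inclusion, specifically in the pointwise vanishing of $(f\circ\gamma)'$ on $\{t\in[0,l]:\gamma(t)\notin\overline{B_0}\}$. This uses crucially that $\mscX\setminus\overline{B_0}$ is \emph{open}, so that $f$ is locally constant (equal to $0$) there and the derivative of the absolutely continuous function $f\circ\gamma$ vanishes genuinely at each such point; knowing only that $f=0$ almost everywhere outside $\overline{B_0}$ would not suffice for a true upper gradient inequality (rather than a weak one). The second inclusion is then essentially bookkeeping, translating $\modpq$-modulus zero into $p$-modulus zero on the bounded ball $B$ via the one-line norm comparison.
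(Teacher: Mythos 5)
Your proof is correct, but it takes a genuinely different route from the paper's in both inclusions. For $\textup{Lip}_b(\mscX)\subset\nmpq$, the paper merely observes that the constant $L$ is an upper gradient of $f$ and then invokes the truncation lemma (Lemma \ref{truncation lemma}) to obtain the $\Modpq$-weak upper gradient $L\chi_{2B}\in\mpq$; you instead verify by hand, via absolute continuity of $f\circ\gamma$ and the local vanishing of $f$ on the open set $\mscX\setminus\overline{B_0}$, that $L\chi_{\overline{B_0}}$ is a \emph{genuine} upper gradient, which is more self-contained and stronger (no appeal to Lemma \ref{truncation lemma} at all); your flagged subtlety is real and you handle it correctly — openness of the complement is exactly what makes $(f\circ\gamma)'$ vanish pointwise there. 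For $\nmpq\subset N^{1,p}_{\loc}(\mscX)$, the paper appeals to Mal\'y's result \cite[Corollary 5.7]{maly1442}, which says the $\nmpq$-norm can be computed using genuine upper gradients, so that restriction to a ball requires no modulus bookkeeping whatsoever; you keep the $\Modpq$-weak upper gradient and transfer exceptional families directly, using the comparison $\|h\|_{L^p(B)}\le[\mu(B)]^{1/p-1/q}\|h\|_{\mpq}$ to show that the restriction to curves inside $B$ of a family of $\Modpq$-modulus zero has $p$-modulus zero in $(B,d,\mu|_B)$; this step is sound (admissibility of $\rho_\ez|_B$ is immediate, and only monotonicity of modulus is needed). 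What each approach buys: the paper's is shorter given the cited machinery and extends verbatim to the general quasi-Banach function lattice setting of \cite{maly1442}; yours is elementary and self-contained, relying only on Proposition \ref{mor}, the definitions, and the one-line norm comparison, at the modest cost of redoing by hand what the cited results package.
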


\begin{proof}
To show the first embedding, let $B$ be a ball in $\mscX$. By
Proposition \ref{mor}, we know that $\chi_B\in \mpq$
and
$$\|\chi_B\|_{\mpq}\ls [\mu(B)]^{1/q}<\fz.$$
Recall, by our conventions on notation at the end of Section \ref{s1},
that the symbol $\ls$ means that the implicit positive constant
here is independent of $B$.

Now let $f\in \textup{Lip}_b(\mscX)$ with
$\supp(f)\subset B$ and $L$ be the Lipschitz constant of $f$,
which means that, for all $x,y\in\mscX$,
$$|f(x)-f(y)|\le Ld(x,y).$$
Since $f\in \textup{Lip}_b(\mscX)$, we know that there exists a positive
constant $M_0$ such that $|f|\le M_0\chi_B$. Hence, by \eqref{a},
we see that $f\in \mpq$ and
$$\|f\|_{\mpq}\ls M_0[\mu(B)]^{1/q}.$$

On the other hand, notice that, for all rectifiable curves $\gamma$, it holds true that
$$|f\circ \gamma(\ell(0))-f\circ\gamma(\ell(\gamma))|\le L\,d(\gamma(\ell(0)),
\gamma(\ell(\gamma)))\le \int_\gamma L \,ds.$$
Hence $L$ is an upper gradient of $f$. Then, by Lemma \ref{truncation lemma},
$L\chi_{2B}$ is a $\Modpq$-weak upper gradient of $f$, which further implies that
$f\in\nmpq$ and
$$\|f\|_{\nmpq}\ls (M_0+L)[\mu(B)]^{1/q}.$$
Thus, $\textup{Lip}_b(\mscX)\subset\nmpq$.

The second embedding follows directly from definitions, together with
\cite[Corollary 5.7]{maly1442}. Indeed, let $f\in\nmpq$. Then, by
\cite[Definition 2.4 and Corollary 5.7]{maly1442},
we know that
$$\|f\|_{\nmpq}=\|f\|_{\mpq}+\inf\|h\|_{\mpq}<\fz,$$
where the infimum is taken over all the upper gradients of $f$. From this,
we deduce that $f$ has an upper gradient $h\in\mpq$ and hence $h\in L^p(E)$
for any ball $E\subset\mscX$. Since it is obvious that
$f\in L^p(E)$, by \cite[Definition 2.4 and Corollary 5.7]{maly1442}
again, we obtain that $f\in N^{1,p}(E)$, which, together with
the arbitrariness of $E\subset\mscX$ and the definition of $N^{1,p}_{\loc}(\mscX)$,
implies that $f\in N^{1,p}_{\loc}(\mscX)$ and hence completes the proof of Theorem \ref{ne}.
\end{proof}

\begin{rem}\label{not dense}
We point out that $\textup{Lip}_b(\mscX)$ might not be dense in $\nmpq$ when $p<q$.
Indeed, even the \emph{set of Lipschitz
functions, $\textup{Lip}(\mscX)$, might not be dense in $\nmpq$ when $p<q$}.
This behavior of $\nmpq$ (non-density of Lipschitz functions) is
different from the Newton-Sobolev space ${N}^{1,p}(\mscX)=NM^p_p(\mscX)$,
since $\textup{Lip}(\mscX)$  is dense in ${N}^{1,p}(\mscX)$
(see \cite[Theorem 4.1]{Sh}).
A counterexample in the Euclidean setting is given in Remark \ref{rd} below.
\end{rem}

\section{Sobolev embeddings\label{s3}}

\hskip\parindent Let $\az\in(0,1]$ and $C^{0,\az}(\mscX)$ denote the
\emph{$\az$-H\"older space} on $\mscX$,
namely, the space of all functions $f$ satisfying that, for all $x,y\in\mscX$,
$$|f(x)-f(y)|\le C [d(x,y)]^\az,$$
where $C$ is a positive constant independent of $x$ and $y$.

It is well known that, when $\mscX=\rn$, the following Sobolev embeddings
hold true:
\begin{eqnarray}\label{ebd1}
W^{1,p}(\rn)&&\hookrightarrow L^{np/(n-p)}(\rn)\quad\textup{if } p<n,
\end{eqnarray}
and
\begin{eqnarray}\label{ebd2}
W^{1,p}(\rn)&&\hookrightarrow C^{0,1-n/p}(\rn)\quad\textup{if } p>n,
\end{eqnarray}
where the \emph{symbol} $\hookrightarrow$ means continuous embedding.
The generalizations of \eqref{ebd1} and \eqref{ebd2} to
the Newton-Sobolev space and the Haj{\l}asz-Sobolev space
on metric measure spaces were obtained in \cite{Sh} and \cite{hk,hk00}, respectively.
This section is devoted to the corresponding Sobolev embedding
theorems for Newton-Morrey-Sobolev spaces.

Recall that a space $\mscX$ is said to
\emph{support a weak $(1,p)$-Poincar\'e inequality}
if there exist positive constants $C$ and $\tau\ge1$ such
that, for all open balls $B$ in $\mscX$ and all
pairs of functions $f$ and $\rho$ defined on $\tau B$,
whenever $\rho$ is an upper gradient of
$f$ in $\tau B$ and $f$ is integrable on $B$,
then
\begin{equation}\label{pi}
\frac{1}{\mu(B)}\int_B|f(x)-f_B|\,d\mu(x)\le C\, {\rm
diam}(B)\lf\{\frac{1}{\mu(\tau B)}\int_{\tau B} \lf[\rho(x)
\r]^p\,d\mu(x)\r\}^{1/p},
\end{equation}
where above and in what follows,
$f_B$ denotes the \emph{integral mean of $f$ on $B$},
namely,
\begin{equation}\label{eq-def-mean}
f_B=\frac{1}{\mu(B)}\int_Bf(y)\,d\mu(y),
\end{equation}
$\textup{diam}(B)$ the \textit{diameter} of $B$ and
$\tau B$ the \textit{ball with the same center as $B$
but $\tau$ times the radius of $B$}.
In particular, if $\tau=1$, then we say that $\mscX$
\emph{supports a $(1,p)$-Poincar\'e inequality}.

It is well known that the Euclidean space supports
a $(1,p)$-Poincar\'e inequality.
For more information on Poincar\'e inequalities, we refer the reader
to \cite{hek1,hek2,hk00} and their references.

A measure $\mu$ on $\mscX$ is said to be
\emph{doubling} if there exists a positive constant
$C$ such that, for all balls $B$ in $\mscX$, it holds true that
$\mu(2B)\le C\mu(B)$.
As a generalization of \eqref{ebd2} to  Newton-Morrey-Sobolev spaces, we have
the following conclusion.

\begin{thm}\label{t-emd}
Let $1\leq p\le q<\fz$ and $Q\in(0,q)$. Assume that
$(\mscX, d,\mu)$ is a metric measure space, with doubling measure $\mu$,
and supports a weak $(1,p)$-Poincar\'{e} inequality.
If there exists a positive constant $C$ such that $\mu(B(x,r))\geq Cr^Q$
for all $x\in\mscX$ and $0<r<2
\textup{diam}(\mscX)$, then
$$\nmpq \hookrightarrow C^{0,1-Q/q}(\mscX).$$
\end{thm}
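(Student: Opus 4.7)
The plan is to mimic Shanmugalingam's proof of the analogous embedding for Newton--Sobolev spaces (\cite[Theorem 5.1]{Sh}): use the weak Poincar\'{e} inequality to compare integral averages of $f$ on shrinking concentric balls, then run a telescoping (chaining) argument whose geometric tail converges precisely because $1-Q/q>0$. The novelty is to replace the $L^p$ norm on the right-hand side of the Poincar\'{e} inequality by the Morrey norm of the $\Modpq$-weak upper gradient using the lower mass bound $\mu(B(x,r))\ge Cr^Q$.

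First, I would record the key pointwise-to-Morrey estimate. If $\rho\in\mpq$ and $B\subset\mscX$ is a ball, then by the very definition \eqref{a} of the Morrey norm,
\begin{equation*}
\lf(\frac{1}{\mu(B)}\int_B[\rho(x)]^p\,d\mu(x)\r)^{1/p}
\le \|\rho\|_{\mpq}\,[\mu(B)]^{-1/q}.
\end{equation*}
Given $f\in\nmpq$ with a $\Modpq$-weak upper gradient $g\in\mpq$, this can be combined with \eqref{pi} applied to $B$ and $\tau B$ (after noting that, since $\Modpq$-negligibility implies $\modp$-negligibility on any bounded subset, $g$ is a $p$-weak upper gradient of $f$, hence admissible for the weak Poincar\'{e} inequality in the standard way) and the lower mass bound $\mu(\tau B)\ge C(\tau r_B)^Q$ to yield
\begin{equation*}
\frac{1}{\mu(B)}\int_B|f(y)-f_B|\,d\mu(y)
\ls r_B\,\|g\|_{\mpq}\,(r_B)^{-Q/q}
= \|g\|_{\mpq}\,(r_B)^{1-Q/q},
\end{equation*}
with implicit constant independent of $B$ (using the doubling condition to absorb $\tau$).

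Second, I would run the standard dyadic chaining. Fix $x,y\in\mscX$ and set $r:=d(x,y)$. For $k\in\nn\cup\{0\}$ let $B_k^x:=B(x,2^{-k}r)$ and $B_k^y:=B(y,2^{-k}r)$; also set $B^\ast:=B(x,2r)\supset B_0^y$. By the doubling property, $\mu(B_k^x)\approx\mu(B_{k+1}^x)$, so
\begin{equation*}
|f_{B_k^x}-f_{B_{k+1}^x}|
\le \frac{1}{\mu(B_{k+1}^x)}\int_{B_{k+1}^x}|f(z)-f_{B_k^x}|\,d\mu(z)
\ls \|g\|_{\mpq}\,(2^{-k}r)^{1-Q/q},
\end{equation*}
and similarly with $y$ in place of $x$. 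Since $Q<q$, the exponent $1-Q/q$ is positive, so the geometric series $\sum_k 2^{-k(1-Q/q)}$ converges, and I obtain at each Lebesgue point $x$ (which is $\mu$-a.e.\ $x$, by doubling) the bound
\begin{equation*}
|f(x)-f_{B^\ast}|\ls \|g\|_{\mpq}\,r^{1-Q/q},
\end{equation*}
and likewise $|f(y)-f_{B^\ast}|\ls \|g\|_{\mpq}\,r^{1-Q/q}$, so the triangle inequality gives $|f(x)-f(y)|\ls \|g\|_{\mpq}\,d(x,y)^{1-Q/q}$ for all Lebesgue points $x,y$. Taking the infimum over $g$ shows $f$ admits a $(1-Q/q)$-H\"{o}lder continuous representative whose seminorm is controlled by $\|f\|_{\nmpq}$, and together with the trivial bound $\|f\|_{L^\infty}\ls\|f\|_{\mpq}$ (from the lower mass bound applied to a single sufficiently large ball, or from the chaining itself plus an integral mean estimate) one obtains the claimed continuous embedding into $C^{0,1-Q/q}(\mscX)$.

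The step I expect to be most delicate is the first one: justifying that a $\Modpq$-weak upper gradient may be used on the right-hand side of \eqref{pi}. Because the weak $(1,p)$-Poincar\'{e} inequality is stated for genuine upper gradients, one needs either to argue that any $\Modpq$-null family of curves is $\modp$-null on every ball (so that $g$ is a $p$-weak upper gradient and one appeals to the standard extension of \eqref{pi} to $p$-weak upper gradients), or to approximate $g$ by genuine upper gradients and pass to the limit using the Morrey-modulus machinery from \cite{maly1442,maly1448}. Once this bridge to the Poincar\'{e} inequality is firmly in place, the remainder is the routine chaining estimate and the convergence of the geometric series, which uses exactly the hypothesis $Q<q$.
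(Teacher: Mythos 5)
Your proposal follows essentially the same route as the paper's proof: pass to a genuine upper gradient $\rho$ with $\|f\|_{\mpq}+\|\rho\|_{\mpq}\ls\|f\|_{\nmpq}$ (the paper does exactly your option (b), via Mal\'y's approximation result, cf.\ Lemma \ref{l-wug2}), combine the weak $(1,p)$-Poincar\'e inequality with the definition of the Morrey norm and the lower mass bound $\mu(\tau B_i)\gs r_i^Q$ to get $|f_{B_i}-f_{B_{i+1}}|\ls (2^{-i}d(x,y))^{1-Q/q}\|\rho\|_{\mpq}$ at Lebesgue points, and sum the telescoping chain using $Q<q$; the only cosmetic difference is that the paper uses a single two-sided chain $\{B_i\}_{i\in\zz}$ joined at $B_0=B(x,2d(x,y))$ instead of your two one-sided chains meeting at $B^\ast$. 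One caveat: your asserted bound $\|f\|_{L^\infty}\ls\|f\|_{\mpq}$ is false in general (Morrey spaces contain locally unbounded functions such as $|x|^{-n/q}$ near the origin in $\rn$), but this step is also unnecessary, since the paper defines $C^{0,1-Q/q}(\mscX)$ by the H\"older difference condition alone, so the seminorm estimate you derive is all that the stated embedding requires.
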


\begin{proof}
By the same reason as that stated in the proof of \cite[Theorem 5.1]{Sh},
we only need to show that, if $f\in\nmpq$ and $x,y$ are
Lebesgue points of $f$, then
\begin{equation*}
|f(x)-f(y)|\ls[d(x,y)]^{1-Q/q}\|f\|_{\nmpq}.
\end{equation*}

To this end, let $B_1:=B(x,d(x,y)),$ $B_{-1}:=B(y,d(x,y))$ and,
 for all $i>1$,
\begin{equation*}
B_i=\frac{1}{2}B_{i-1}\textup{ and }B_{-i}=\frac{1}{2}B_{-i+1}.
\end{equation*}
Let $B_0:=B(x,2d(x,y))$.
Since $x,y$ are Lebesgue points, it follows that
\begin{equation*}
|f(x)-f(y)|\leq\sum_{i\in\zz}|f_{B_i}-f_{B_{i+1}}|.
\end{equation*}

Let $\rho$ be an upper gradient of $f$ such that
$$\|f\|_{\mpq}+\|\rho\|_{\mpq}\ls \|f\|_{\nmpq}.$$
Let $r_i$ be the radius of the ball $B_i$. Then, by this, \eqref{a}, the doubling condition of $\mu$ and
the weak $(1,p)$-Poincar\'e inequality, together with $\mu(\tau B_i)\gtrsim r_i^Q$,
we see that, when $i\in\nn$,
\begin{eqnarray}\label{3.4}
|f_{B_i}-f_{B_{i+1}}|
&&\ls \frac1{\mu(B_i)}\int_{B_i}|f_{B_i}-f(x)|\,d\mu(x)\\
&&\lesssim \textup{diam}(B_i)\left\{\frac1{\mu(\tau B_i)}\int_{\tau B_i}[\rho(x)]^p
\,d\mu(x)\right\}^{1/p}\noz\\
&&\lesssim r_i[\mu(\tau B_i)]^{-1/q}
\|\rho\|_{\mpq}\lesssim r_i^{1-Q/q}\|\rho\|_{\mpq}\noz\\
&&\lesssim 2^{-i(1-Q/q)}[d(x,y)]^{1-Q/q}\|f\|_{\nmpq}.\noz
\end{eqnarray}
Similarly, for all $i\le -2$, we also have
\begin{eqnarray*}
|f_{B_i}-f_{B_{i+1}}|
\ls 2^{i(1-Q/q)} [d(x,y)]^{1-Q/q}\|f\|_{\nmpq}.
\end{eqnarray*}

On the other hand, by the H\"older inequality and the doubling condition
of $\mu$, we see that
\begin{eqnarray*}
|f_{B_{-1}}-f_{B_0}|
&&\le\frac{1}{\mu(B_{-1})}\int_{B_{-1}}\lf|f_{B_{0}}-f(z)\r|\,d\mu(z)
\ls\frac{1}{\mu(B_0)}\int_{B_{0}}|f_{B_{0}}-f(x)|\,d\mu(x)
\end{eqnarray*}
and then, similar to \eqref{3.4}, we further conclude that
$$|f_{B_{-1}}-f_{B_0}|\ls[d(x,y)]^{1-Q/q} \|f\|_{\nmpq}.$$
Meanwhile, by the same method as above, we also find that
$$|f_{B_{0}}-f_{B_1}|\ls[d(x,y)]^{1-Q/q} \|f\|_{\nmpq}.$$

Thus, combining the above estimates, by $Q\in (0,q)$, we see that
\begin{eqnarray*}
|f(x)-f(y)|&&\lesssim [d(x,y)]^{1-Q/q}\left[\sum_{i\in\zz}2^{-|i|
(1-Q/q)}\right]\|f\|_{\nmpq}\\
&&\ls [d(x,y)]^{1-Q/q} \|f\|_{\nmpq},
\end{eqnarray*}
which completes the proof of Theorem \ref{t-emd}.
\end{proof}

\begin{rem}
Theorem \ref{t-emd} generalizes \cite[Theorem 5.1]{Sh} by taking $p=q$.
\end{rem}

Next we give a Rellich-Kondrachov type embedding theorem for $\nmpq$ when
$p$ is small, which can be seen as a generalization of \eqref{ebd1}.
We begin with the following notion of the Ahlfors $Q$-regular measure spaces;
see, for example, \cite{he}.

\begin{defn}\label{def-Qregular}
Let $Q\in(0,\fz)$. A metric measure space $\mscX$ is
said to be \emph{Ahlfors $Q$-regular}
(or \emph{$Q$-regular}), if there exists a
constant $C\geq1$ such that, for any $x\in\mscX$
and any $r\in(0,2{\rm diam}(\mscX)),$
\begin{equation*}
\frac{1}{C}\,r^Q\leq\mu(B(x,r))\leq Cr^Q.
\end{equation*}
\end{defn}

Let $L_\loc^1(\mscX)$ be the collection of all locally integrable
functions on $\mscX$.
The \textit{Hardy-Littlewood maximal operator} $M$ is defined by setting,
for all $f\in L^1_\loc(\mscX)$ and $x\in\mscX$,
\begin{equation}\label{def max}
Mf(x):=\sup_{B\ni x}\frac1{\mu(B)}\int_B|f(y)|\,d\mu(y),
\end{equation}
where the supremum is taken over all balls $B$ in $\mscX$ containing $x$.
The following statement shows that the operator $M$ is bounded on Morrey spaces.
For its proof, we refer the reader to \cite{am} for example.

\begin{lem}\label{lem_bd of hl in morrey}
Let $(\mscX,d,\mu)$ be a metric space with doubling measure $\mu$ and
$1<p\leq q\le\infty$. Then there exists a positive constant $C$ such
that, for all $f\in\mpq$,
\begin{equation*}
\|Mf\|_{\mpq}\leq C\|f\|_{\mpq}.
\end{equation*}
\end{lem}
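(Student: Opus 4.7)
The plan is to use the now-standard decomposition technique for deducing Morrey bounds from $L^p$ bounds: fix an arbitrary ball $B=B(x_0,r)\subset\mscX$, split $f=f\chi_{2B}+f\chi_{\mscX\setminus 2B}=:f_1+f_2$, and use the sublinearity $Mf\le Mf_1+Mf_2$ to handle the two pieces separately. The case $q=\fz$ reduces to the trivial pointwise bound $\|Mf\|_{L^\fz}\le\|f\|_{L^\fz}$, so the main work is for $q<\fz$.

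For the local piece, I would invoke the $L^p(\mscX)$-boundedness of the Hardy--Littlewood maximal operator on a doubling metric measure space (this holds since $p>1$ and is classical, a consequence of the Vitali covering lemma and Marcinkiewicz interpolation with the weak $(1,1)$ bound). This gives
\begin{equation*}
\int_B [Mf_1(x)]^p\,d\mu(x)\le \int_\mscX [Mf_1(x)]^p\,d\mu(x)\ls \int_{2B}|f(x)|^p\,d\mu(x),
\end{equation*}
and then the doubling of $\mu$ lets me convert $[\mu(B)]^{1/q-1/p}$ into $[\mu(2B)]^{1/q-1/p}$ up to a constant, yielding a bound by $\|f\|_{\mpq}$.

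For the global piece I would establish a pointwise estimate: for any $x\in B$ and any ball $B'=B(y,s)$ containing $x$ with $B'\cap(\mscX\setminus 2B)\neq\emptyset$, the triangle inequality forces $s\gs r$ and $B'\subset B(x_0,Cs)$ for some absolute $C$. Then H\"older's inequality together with the Morrey norm estimate gives
\begin{equation*}
\frac1{\mu(B')}\int_{B'}|f_2|\,d\mu\ls [\mu(B(x_0,Cs))]^{-1/q}\|f\|_{\mpq},
\end{equation*}
and taking the supremum over admissible $s\gs r$, using monotonicity and doubling, yields $Mf_2(x)\ls [\mu(B)]^{-1/q}\|f\|_{\mpq}$ for all $x\in B$. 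Multiplying by $[\mu(B)]^{1/p}$ and by $[\mu(B)]^{1/q-1/p}$ controls the Morrey contribution of $Mf_2$ by $\|f\|_{\mpq}$.

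Taking the supremum over $B$ and combining the two parts gives the desired inequality. The main obstacle is purely bookkeeping: checking that the doubling constant absorbs all the geometric constants produced when one enlarges $B'$ to a ball centered at $x_0$, and verifying that the lower bound $s\gs r$ really does follow from $B'\ni x\in B$ and $B'\cap(\mscX\setminus 2B)\neq\emptyset$ via the triangle inequality. No further hypothesis beyond $p>1$ and the doubling property of $\mu$ is needed.
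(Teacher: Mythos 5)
Your argument is correct, and it is essentially the canonical proof of this lemma: the paper itself gives no argument, deferring to Arai and Mizuhara \cite{am}, whose result rests on the same Chiarenza--Frasca-type splitting $f=f\chi_{2B}+f\chi_{\mscX\setminus 2B}$ that you use, and your geometric checks do go through (if $x\in B(x_0,r)\cap B(y,s)$ and $B(y,s)$ meets $\mscX\setminus 2B$, the triangle inequality gives $s>r/2$ and $B(y,s)\subset B(x_0,4s)$, while $B(x_0,r)\subset B(y,5s)$ and doubling give $\mu(B(x_0,r))\ls\mu(B(y,s))$, so the global piece satisfies $Mf_2\ls[\mu(B)]^{-1/q}\|f\|_{\mcm_p^q(\mscX)}$ on $B$). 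The one spot to tighten is $q=\infty$: the reduction needs not only $\|Mf\|_{L^\infty(\mscX)}\le\|f\|_{L^\infty(\mscX)}$ but also $\|f\|_{L^\infty(\mscX)}\le\|f\|_{\mcm_p^\infty(\mscX)}$ (Lebesgue differentiation, available since $\mu$ is doubling) --- or, more simply, observe that H\"older alone gives the pointwise bound $Mf(x)\le\|f\|_{\mcm_p^\infty(\mscX)}$ for every $x$, which immediately controls the $\mcm_p^\infty(\mscX)$ norm of $Mf$.
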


We also need the following
boundedness of fractional integral operators on Morrey spaces.

\begin{prop}\label{lem-bd-of-int-op-morrey}
Let $\mscX$ be Ahlfors $Q$-regular with $Q\in(0,\fz)$,
$1<p\le q<\fz$ and $\az>0$ such that
$q<Q/\alpha$. Then, the fractional integral $I_\alpha$ is
bounded from $\mcm_p^q(\mscX)$ to $\mcm_{p^\ast}^{q^\ast}(\mscX)$, where
$p^\ast:=\frac{Qp}{Q-q\alpha}$,
$q^\ast:= \frac{Qq}{Q-q\alpha}$ and $I_\alpha$
is defined by setting, for all $f\in\mpq$ and $x\in\mscX$,
\begin{equation*}
I_\alpha(f)(x):=\int_\mscX\frac{f(y)}{[d(x,y)]^{Q-\alpha}}\,d\mu(y).
\end{equation*}
\end{prop}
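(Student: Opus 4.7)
The plan is to follow the classical Hedberg--Adams strategy, adapted to the metric Morrey setting, reducing the boundedness of $I_\alpha$ to the boundedness of the Hardy--Littlewood maximal operator $M$ already provided by Lemma \ref{lem_bd of hl in morrey}. The heart of the argument is a pointwise inequality of the form
\begin{equation*}
|I_\alpha f(x)|\lesssim r^\alpha Mf(x)+r^{\alpha-Q/q}\|f\|_{\mpq}
\end{equation*}
for every $r>0$ and every $x\in\mscX$, which is then optimized in $r$.

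First, fix $x\in\mscX$, $r>0$, and split $I_\alpha f(x)=J_1(x,r)+J_2(x,r)$, where $J_1$ is the integral over $B(x,r)$ and $J_2$ is the integral over $\mscX\setminus B(x,r)$. For $J_1$, I would dyadically decompose $B(x,r)$ into the annuli $A_k:=B(x,2^{-k}r)\setminus B(x,2^{-k-1}r)$, $k\ge 0$, bound $[d(x,y)]^{\alpha-Q}\lesssim (2^{-k-1}r)^{\alpha-Q}$ on $A_k$, use Ahlfors $Q$-regularity to control $\mu(B(x,2^{-k}r))\lesssim (2^{-k}r)^Q$, and sum the resulting geometric series in $2^{-k\alpha}$ to arrive at $J_1(x,r)\lesssim r^\alpha Mf(x)$.

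For $J_2$, I would likewise decompose $\mscX\setminus B(x,r)$ into annuli $C_k:=B(x,2^{k+1}r)\setminus B(x,2^k r)$, $k\ge 0$, and estimate
\begin{equation*}
\int_{C_k}\frac{|f(y)|}{[d(x,y)]^{Q-\alpha}}\,d\mu(y)
\lesssim (2^k r)^{\alpha-Q}\int_{B(x,2^{k+1}r)}|f(y)|\,d\mu(y).
\end{equation*}
By H\"older's inequality, the definition of $\|\cdot\|_{\mpq}$ in \eqref{a}, and Ahlfors $Q$-regularity, the inner integral is bounded by $[\mu(B(x,2^{k+1}r))]^{1-1/q}\|f\|_{\mpq}\lesssim (2^kr)^{Q-Q/q}\|f\|_{\mpq}$. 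Summing in $k$ and using the hypothesis $q<Q/\alpha$ (which makes the geometric ratio $2^{\alpha-Q/q}$ strictly less than $1$) yields $J_2(x,r)\lesssim r^{\alpha-Q/q}\|f\|_{\mpq}$. This step, particularly the convergence condition $\alpha<Q/q$, is where the assumption $q<Q/\alpha$ is essential and is the main technical obstacle; the rest is bookkeeping.

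Having established the pointwise bound, I choose $r$ to balance the two terms, namely $r:=(\|f\|_{\mpq}/Mf(x))^{q/Q}$ (assuming $Mf(x)\ne 0$; the other case is trivial). Using the identity $p^*/p=q^*/q=Q/(Q-\alpha q)$, this yields
\begin{equation*}
|I_\alpha f(x)|\lesssim [Mf(x)]^{p/p^\ast}\|f\|_{\mpq}^{1-p/p^\ast}.
\end{equation*}
Raising to the power $p^\ast$, integrating over a ball $B$, and using $p^*/q^*=p/q$ shows that, for every ball $B\subset\mscX$,
\begin{equation*}
[\mu(B)]^{1/q^\ast-1/p^\ast}\lf[\int_B|I_\alpha f(x)|^{p^\ast}\,d\mu(x)\r]^{1/p^\ast}
\lesssim \|f\|_{\mpq}^{1-p/p^\ast}\|Mf\|_{\mpq}^{p/p^\ast}.
\end{equation*}
Taking the supremum over $B$ and invoking Lemma \ref{lem_bd of hl in morrey} (available since $p>1$) to replace $\|Mf\|_{\mpq}$ by $\|f\|_{\mpq}$ gives $\|I_\alpha f\|_{\mpsqs}\lesssim \|f\|_{\mpq}$, as desired.
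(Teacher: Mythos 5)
Your proposal is correct and takes essentially the same route as the paper: the identical near/far splitting of $I_\alpha f$ at radius $r$ with dyadic annular decompositions, the same use of H\"older's inequality, Ahlfors $Q$-regularity and the convergence condition $\alpha<Q/q$, the same optimizing choice $r=(\|f\|_{\mpq}/Mf(x))^{q/Q}$, and the same final appeal to Lemma \ref{lem_bd of hl in morrey}. The only cosmetic difference is in the last step, where you integrate the pointwise bound over balls directly instead of invoking the norm identity $\|[Mf]^{1-q\alpha/Q}\|_{\mcm_{p^\ast}^{q^\ast}(\mscX)}=\|Mf\|_{\mpq}^{1-q\alpha/Q}$; these are the same computation.
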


\begin{proof}
Without loss of generality, we may assume that $f\in \mpq$ is
non-negative. For any $x\in\mscX$, fix $\delta>0$ and write
\begin{eqnarray*}
I_\alpha(f)(x)&=& \int_{B(x,\delta)}\frac{f(y)}{[d(x,y)]^{Q-\alpha}}\,d\mu(y)+
\int_{\mscX\setminus B(x,\delta)}\frac{f(y)}{[d(x,y)]^{Q-\alpha}}\,d\mu(y)\\
&=:&b^{(\alpha)}_\delta(x)+g^{(\alpha)}_\delta(x).\nonumber
\end{eqnarray*}

By the H\"{o}lder inequality, \eqref{a} and the Ahlfors $Q$-regular property of
$\mscX$, together with $q<Q/\az$, we see that
\begin{eqnarray}\label{eqn-estimate-for-good}
g^{(\alpha)}_\delta(x)&&=\sum_{j=0}^\fz \int_{B(x,2^{j+1}\delta)
\setminus B(x,2^j\delta)} \frac{f(y)}{[d(x,y)]^{Q-\alpha}}\,d\mu(y)\\
&&\ls \sum_{j=0}^\fz \lf(2^j\delta\r)^{\alpha-Q} \lf[\mu\lf(B\lf(x,2^{j+1}\delta\r)\r)\r]^{1-1/p}
\lf\{\int_{B(x,2^{j+1}\delta)}[f(y)]^p
\,d\mu(y)\r\}^{1/p}\nonumber\\
&&\ls \sum_{j=0}^\fz \lf(2^j\delta\r)^{\alpha-Q} \lf[\mu\lf(B\lf(x,2^{j+1}\delta\r)\r)\r]^{1-1/q}
\|f\|_{\mpq}\nonumber\\
&&\approx \sum_{j=0}^\fz \lf(2^j\delta\r)^{\alpha-Q} \lf(2^j\delta\r)^{(1-1/q)Q} \|f\|_{\mpq}
\ls \delta^{\alpha-Q/q}\|f\|_{\mpq}.\nonumber
\end{eqnarray}

For $b_\delta$, let $A_j:=B_j\setminus B_{j+1}:=B(x,2^{-j}\delta)\setminus
B(x,2^{-j-1}\delta)$ for all $j\in\nn\cup\{0\}=:\zz_+$. Then, by the
Ahlfors $Q$-regular property of $\mscX$, together with $\az>0$, we see that, for all $x\in\mscX$,
\begin{eqnarray}\label{eqn-estimate-for-bad}
b^{(\alpha)}_\delta(x)&=&\sum_{j\in\zz_+}\int_{A_j}\frac{f(y)}
{[d(x,y)]^{Q-\alpha}}\,d\mu(y)
\approx\sum_{j\in\zz_+}\lf(2^{-j}\delta\r)^{\alpha-Q}\int_{B_j}f(y)\,d\mu(y)\\
&\ls& \delta^\alpha\sum_{j\in\zz_+}2^{-j\alpha}\frac{1}{\mu(B_j)}
\int_{B_j}f(y)\,d\mu(y)\ls\delta^\alpha M(f)(x).\nonumber
\end{eqnarray}

Combining
\eqref{eqn-estimate-for-good} and \eqref{eqn-estimate-for-bad}, we have
\begin{equation*}
I_\alpha(f)(x)\ls \delta^\alpha M(f)(x)+ \delta^{\alpha-Q/q}\|f\|_{\mpq}.
\end{equation*}
Now let $\delta:=\|f\|_{\mpq}^{q/Q}[ M(f)(x)]^{-q/Q}$.
Then for any $x\in\mscX$,
\begin{equation*}
I_\alpha(f)(x)\ls \|f\|_{\mpq}^{\alpha q/Q}[M(f)(x)]^{1-\alpha q/Q},
\end{equation*}
which, together with Lemma \ref{lem_bd of hl in morrey}, further implies that
\begin{eqnarray*}
\|I_\alpha(f)\|_{\mcm_{p^\ast}^{q^\ast}(\mscX)}&\ls&\|f\|_{\mpq}^{q\alpha/Q}
\lf\|[M(f)]^{1-q\alpha/Q}\r\|_{\mcm_{p^\ast}^{q^\ast}(\mscX)}\\
&\approx &\|f\|_{\mpq}^{q\alpha/Q}\|M(f)\|_{\mpq}^{1-q\alpha/Q}
\ls \|f\|_{\mpq}.
\end{eqnarray*}
This finishes the proof of Proposition \ref{lem-bd-of-int-op-morrey}.
\end{proof}

Now we have the following Rellich-Kondrachov type embedding
result, which generalizes \cite[Theorem 5.2]{Sh} by taking
$p=q$, $\alpha=1$ and $\mscX$ being bounded.

\begin{thm}\label{thm-rk-embd}
Let $1\le r<p\leq q<\infty$. Let $1<p/r\le q/r<Q/\az<\fz$,
$\alpha\in(0,r)\cap(0,Q)$ and $\mscX$ be an Ahlfors
$Q$-regular metric measure space supporting a
weak $(1,r)$-Poincar\'{e} inequality.
Then there exists a positive constant $C$ such that,
for all functions $f\in\nmpq$, upper gradients $\rho$ of $f$ and
$R\in(0,\fz)$,
\begin{equation*}
\|f-f_{B(\cdot,R)}\|_{\mcm_{Q^*p}^{
Q^*q}(\mscX)}\leq C R^{1-\alpha/r}\|\rho\|_{\mpq},
\end{equation*}
where $Q^*:=\frac{Qr}{Qr-q\az}$.
\end{thm}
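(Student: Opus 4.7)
The plan is to first establish a pointwise Riesz-potential bound for the oscillation $f-f_{B(\cdot,R)}$ and then pass to the Morrey norm via Proposition \ref{lem-bd-of-int-op-morrey}.

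Concretely, I would fix a Lebesgue point $x$ of $f$ (which holds $\mu$-almost everywhere thanks to the doubling property inherited from Ahlfors $Q$-regularity), set $B_i:=B(x,2^{-i}R)$ for $i\in\nn\cup\{0\}$, and start from the telescoping identity $f(x)-f_{B(x,R)}=\sum_{i\ge 0}(f_{B_{i+1}}-f_{B_i})$. For each $i$, applying the weak $(1,r)$-Poincar\'e inequality on $\tau B_i$, together with Ahlfors $Q$-regularity and doubling, yields
\begin{equation*}
|f_{B_i}-f_{B_{i+1}}|\ls (2^{-i}R)^{1-Q/r}\lf[\int_{\tau B_i}[\rho(y)]^r\,d\mu(y)\r]^{1/r}.
\end{equation*}
Since $d(x,y)\ls 2^{-i}R$ for $y\in\tau B_i$, the last integral is bounded by $(2^{-i}R)^{Q-\alpha}I_\alpha(\rho^r)(x)$, and summing the resulting geometric series (which converges because $\alpha<r$ ensures $1-\alpha/r>0$) produces the pointwise estimate
\begin{equation*}
|f(x)-f_{B(x,R)}|\ls R^{1-\alpha/r}[I_\alpha(\rho^r)(x)]^{1/r}.
\end{equation*}

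Next I would take Morrey norms of both sides. Observe that $\rho^r\in\mcm_{p/r}^{q/r}(\mscX)$ with $\|\rho^r\|_{\mcm_{p/r}^{q/r}(\mscX)}=\|\rho\|_{\mpq}^r$, and that the exponents $p/r$ and $q/r$ satisfy $1<p/r\le q/r<Q/\alpha$ by hypothesis. Proposition \ref{lem-bd-of-int-op-morrey} applied with $p$, $q$ replaced by $p/r$, $q/r$ then gives $\|I_\alpha(\rho^r)\|_{\mcm_{Q^\ast p/r}^{Q^\ast q/r}(\mscX)}\ls\|\rho\|_{\mpq}^r$, once one verifies the exponent identity $\frac{Q(p/r)}{Q-(q/r)\alpha}=\frac{Q^\ast p}{r}$ and its analogue for $q$ (both immediate from $Q^\ast=Qr/(Qr-q\alpha)$). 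The elementary scaling identity $\|h^{1/r}\|_{\mcm_{rs}^{rt}(\mscX)}=\|h\|_{\mcm_s^t(\mscX)}^{1/r}$, applied to $h=I_\alpha(\rho^r)$, then produces the required bound.

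The main obstacle I foresee is the derivation of the pointwise Riesz-potential estimate: one must split the exponent $1-Q/r$ coming from the Poincar\'e-plus-Ahlfors step as $(1-\alpha/r)-(Q-\alpha)/r$ in exactly the right way, so that a geometric series in $2^{-i(1-\alpha/r)}$ emerges and, simultaneously, the surviving integral at each scale is dominated by a genuine Riesz potential rather than a weaker fractional maximal expression. The convergence of this series is precisely what forces the hypothesis $\alpha\in(0,r)$. Once this pointwise inequality is secured, the passage to the Morrey norm is a routine combination of Proposition \ref{lem-bd-of-int-op-morrey} with the scaling property of Morrey norms.
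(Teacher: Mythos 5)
Your proposal is correct and follows essentially the same route as the paper's own proof: the identical telescoping sum over the dyadic balls $B_i=B(x,2^{-i}R)$ at a Lebesgue point, the same weak $(1,r)$-Poincar\'e plus Ahlfors $Q$-regularity step with the kernel $[d(x,z)]^{-(Q-\alpha)}$ inserted via $d(x,z)\lesssim 2^{-i}R$ on $\tau B_i$ to yield the pointwise bound $|f(x)-f_{B(x,R)}|\lesssim R^{1-\alpha/r}[I_\alpha(\rho^r)(x)]^{1/r}$, and the same conclusion via Proposition \ref{lem-bd-of-int-op-morrey} at the exponents $p/r,\ q/r$ together with the Morrey scaling identity $\|h^{1/r}\|_{\mcm_{rs}^{rt}(\mscX)}=\|h\|_{\mcm_s^t(\mscX)}^{1/r}$. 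Your exponent verification $\frac{Q(p/r)}{Q-(q/r)\alpha}=\frac{Q^\ast p}{r}$ and the role of $\alpha<r$ in the convergence of the geometric series are exactly as in the paper.
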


\begin{proof}
Let $f\in\nmpq$ and $\rho$ be an
upper gradient of $f$. For any Lebesgue point $x$ for $f$,
we write $B_0:=B(x,R)$ and
$B_i:=B(x,2^{-i}R)$ for all $i\in\nn$.
Since an Ahlfors $Q$-regular space is doubling, by the weak $(1,r)$-Poincar\'{e}
inequality (namely, the inequality \eqref{pi}
with $p$ replaced by $r$) and $r>\az$, we see that
\begin{eqnarray*}
|f(x)-f_{B(x,R)}|&\leq&\sum_{i=0}^\infty|f_{B_i}-f_{B_{i+1}}|
\ls\sum_{i=0}^\infty\frac{1}{\mu(B_i)}\int_{B_i}|f(z)-f_{B_i}|\,d\mu(z)\\
&\ls&\sum_{i=0}^\infty\frac{\textup{diam}(B_i)}{[\mu(\tau B_i)]^{1/r}}
\lf\{\int_{\tau B_i}[\rho(z)]^r\,d\mu(z)\r\}^{1/r}\nonumber\\
&\approx&\sum_{i=0}^\infty\frac{\textup{diam}(B_i)}{(2^{-i}\tau R)^{Q/r}}\lf\{
\int_{\tau B_i}[\rho(z)]^r\,d\mu(z)\r\}^{1/r}\nonumber\\
&\ls&\sum_{i=0}^\infty\frac{2^{-i}R}{(2^{-i}\tau R)^{\alpha/r}}\lf\{
\int_{\tau B_i}\frac{[\rho(z)]^r}{[d(x,z)]^{Q-\alpha}}\,d\mu(z)
\r\}^{1/r}\nonumber\\
&\ls& R^{1-\alpha/r}\lf\{\int_{\mscX}\frac{[\rho(z)]^r}
{[d(x,z)]^{Q-\alpha}}\,d\mu(z)\r\}^{1/r}\approx
R^{1-\alpha/r}[I_\alpha(\rho^r)(x)]^{1/r}.
\end{eqnarray*}
Applying Proposition \ref{lem-bd-of-int-op-morrey}, together with
$1<p/r\le q/r<Q/\az$, we conclude that
\begin{eqnarray}
\|f-f_{B(\cdot,R)}\|_{\mcm_{Q^*p}^{
Q^*q}(\mscX)}
&\ls& R^{1-\alpha/r}\lf\|[I_\alpha(\rho^r)]^{1/r}\r\|_{
\mcm_{Q^*p}^{Q^*q}(\mscX)}
\approx R^{1-\alpha/r}\lf\|I_\alpha(\rho^r)\r\|^{1/r}_{
\mcm_{Q^*p/r}^{Q^*q/r}(\mscX)}\nonumber\\
&\ls& R^{1-\alpha/r}\|\rho^r\|^{1/r}_{
\mcm_{p/r}^{q/r}(\mscX)}\approx R^{1-\alpha/r}\|\rho\|_{
\mcm_p^q(\mscX)}\nonumber,
\end{eqnarray}
which completes the proof of Theorem \ref{thm-rk-embd}.
\end{proof}

\begin{rem}
(i) Let $1<r<p<\fz$, $1<p/r<Q<\fz$,
and $\mscX$ be an Ahlfors $Q$-regular metric measure space supporting a
weak $(1,r)$-Poincar\'{e} inequality. Then, by Theorem \ref{thm-rk-embd}
with $\alpha=1$,
we see that there exists a positive constant $C$ such that, for all functions
$f\in N^{1,p}(\mscX)$, upper gradients $\rho$ of $f$ and $R\in(0,\fz)$,
\begin{equation*}
\|f-f_{B(\cdot,R)}\|_{L^{\frac{Qpr}{Qr-p}}(\mscX)}\leq CR^{1-1/r}\|\rho\|_{L^p(\mscX)},
\end{equation*}
which has its own interest. However, it is not clear whether the above conclusion still holds
true for the case $r=1$ or not, since, we had to use Theorem \ref{thm-rk-embd}
with $\az=1$ and, to this end, we need $r>\alpha=1$.

(ii) We also remark that  Theorem \ref{thm-rk-embd} generalizes the classical
result for Newton-Sobolev spaces in \cite[Theorem 5.2]{Sh}. Indeed,
if we further assume that $\mscX$ is bounded, then we know
that  $f_\mscX=f_{B(x, \mathop{\mathrm{\,diam}}(\mscX))}$ for almost all $x\in\mscX$.
Thus, it follows, from (i), that, under the same assumptions
on $Q,\, r,\, p$ as in (i), there exists a positive constant $C$ such that, for
all functions $f\in N^{1,p}(\mscX)$ and upper gradients $\rho$ of $f$,
\begin{equation*}
\|f-f_{\mscX}\|_{L^{\frac{Qpr}{Qr-p}}(\mscX)}\leq C[{\rm diam}(\mscX)]^{
1-1/r}\|\rho\|_{L^p(\mscX)},
\end{equation*}
which is just \cite[Theorem 5.2]{Sh}.

(iii) The condition on the weak $(1,r)$-Poincar\'e inequality in Theorem \ref{thm-rk-embd}
can be replaced by the weak $(1,1)$-Poincar\'e inequality, due to the H\"older inequality.
\end{rem}

\section{Haj{\l}asz-Morrey-Sobolev spaces\label{s4}}

\hskip\parindent In this section, we  introduce Morrey-Sobolev spaces associated with
Haj\l asz gradients and consider the relation between the
Haj{\l}asz-Morrey-Sobolev space and the Newton-Morrey-Sobolev space.

\begin{defn}\label{def hmpu}
Let $0<p\leq q\leq\fz$. The \emph{Haj\l asz-Morrey-Sobolev space} $\hmpq$ is defined to
be the space of all measurable functions $f$ that have a Haj\l asz
gradient $h\in\mpq$. The \emph{norm} of $f\in\hmpq$ is defined as
\begin{equation*}
\|f\|_{HM_p^q(\mscX)}:=\|f\|_{\mpq}+\inf\|h\|_{\mpq},
\end{equation*}
where the infimum is taken over all Haj\l asz gradients $h$ of $f$.
\end{defn}

We remark that $\hmpq$ when $p=q$ is just the
Haj\l asz-Sobolev space $M^{1,p}(\mscX)$ of \cite{ha}.
Moreover, $\|f\|_{\hmpq}=0$
if and only if $f=0$ almost everywhere.

To consider the relation between the Haj{\l}asz-Morrey-Sobolev
space and the Newton-Morrey-Sobolev space, we need the following technical lemma,
which is a special case of \cite[Lemma 5.6]{maly1442}.

\begin{lem}\label{l-wug2}
Let $1\le p\le q<\fz$ and $g$ be a $\Modpq$-weak upper gradient of $f$. Then, for any
$\varepsilon\in(0,\fz)$, there exists a function $g_\varepsilon$, which is an
upper gradient of $f$, such that $\|g_\varepsilon-g\|_{\mpq}\leq\varepsilon$
and $g_\varepsilon\geq g$ everywhere on $\mscX$.
\end{lem}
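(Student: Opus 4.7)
The plan is to extract from the $\Modpq$-null exceptional family a single admissible function with arbitrarily small Morrey norm whose line integrals blow up on that family, and then add it to $g$. This is the standard Fuglede-type perturbation that has been used for ordinary $p$-modulus (e.g.\ in \cite{Sh}), and the only point that requires attention here is transferring it from $L^p$-norms to Morrey norms.

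First I would identify the exceptional family $\Gamma_0\subset\Gamma_{\rect}$ on which \eqref{ineq upper gradient} fails for $g$; by hypothesis $\Modpq(\Gamma_0)=0$. Unwinding Definition \ref{def mod}, this means that for every $n\in\nn$ there exists a non-negative Borel measurable function $\rho_n\in F(\Gamma_0)$ with $\|\rho_n\|_{\mpq}\leq\varepsilon/2^{n+1}$. By construction $\int_\gamma\rho_n\,ds\geq 1$ for each $\gamma\in\Gamma_0$.

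Next I would set $\rho:=\sum_{n=1}^\infty\rho_n$ and verify three properties: (i) $\rho$ is non-negative and Borel measurable, which is immediate; (ii) $\|\rho\|_{\mpq}\leq\varepsilon$, via the triangle inequality applied to the partial sums $S_N:=\sum_{n=1}^N\rho_n$ combined with a monotone convergence argument (since $\|\cdot\|_{\mpq}$ is defined through suprema over balls of integrals $\int_B|\cdot|^p\,d\mu$, it enjoys a Fatou-type property under monotone pointwise limits); and (iii) $\int_\gamma\rho\,ds=\sum_n\int_\gamma\rho_n\,ds=\infty$ for every $\gamma\in\Gamma_0$. With this in hand, I define $g_\varepsilon:=g+\rho$. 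Then $g_\varepsilon\geq g$ pointwise on $\mscX$, $\|g_\varepsilon-g\|_{\mpq}=\|\rho\|_{\mpq}\leq\varepsilon$, and for any $\gamma\in\Gamma_{\rect}$ the upper gradient inequality \eqref{ineq upper gradient} holds: if $\gamma\notin\Gamma_0$ it already holds for $g$ and hence for the larger $g_\varepsilon$; if $\gamma\in\Gamma_0$, the right-hand side $\int_\gamma g_\varepsilon\,ds$ is infinite by (iii), so the inequality is trivial.

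The only genuinely non-routine step is justifying the lattice/Fatou property of the Morrey norm needed in (ii), so that the triangle inequality on partial sums transfers to the infinite sum. This, however, is a direct consequence of the monotone convergence theorem applied inside each ball $B$ in the supremum defining \eqref{a}, which gives $\|S_N\|_{\mpq}\uparrow\|\rho\|_{\mpq}$ and hence $\|\rho\|_{\mpq}\leq\sum_{n=1}^\infty\|\rho_n\|_{\mpq}\leq\varepsilon$. Since Morrey spaces are Banach function lattices (as already remarked after Definition \ref{def nm space}), the argument in fact works verbatim for any such lattice, which explains why this is a special case of \cite[Lemma 5.6]{maly1442}.
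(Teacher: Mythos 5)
Your proof is correct. Note, however, that the paper does not actually prove this lemma at all: it simply observes that the statement is a special case of Mal\'y's result for Newtonian spaces built on general quasi-Banach function lattices \cite[Lemma 5.6]{maly1442}, exactly because the Morrey space is such a lattice with the Fatou property. What you have written is the classical Fuglede-type perturbation argument, which is precisely the mechanism behind Mal\'y's general lemma, specialized to the Morrey norm; so your route is self-contained where the paper's is a citation, and the two are mathematically the same idea. Your treatment of the one Morrey-specific point is the right one: since $1\le p$, the triangle inequality holds for $\|\cdot\|_{\mpq}$, and the Fatou property follows by applying monotone convergence to $\int_B S_N^p\,d\mu$ inside each fixed ball $B$ before taking the supremum in \eqref{a}, giving $\|\rho\|_{\mpq}\le\sum_n\|\rho_n\|_{\mpq}\le\varepsilon$. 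Two small bookkeeping remarks, neither a gap: since $\Modpq$ is defined as $\inf\|\rho\|_{\mpq}^p$, vanishing modulus gives $\rho_n\in F(\Gamma_0)$ with $\|\rho_n\|_{\mpq}^p$ arbitrarily small, so you should take a $p$-th root to arrange $\|\rho_n\|_{\mpq}\le\varepsilon 2^{-n-1}$ (which you implicitly do); and the interchange $\int_\gamma\sum_n\rho_n\,ds=\sum_n\int_\gamma\rho_n\,ds$ is justified by monotone convergence along the arc-length parameterization, while Borel measurability of $g_\varepsilon=g+\rho$ uses that $g$ is Borel by the paper's definition of a $\Modpq$-weak upper gradient. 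The benefit of your explicit argument is transparency for the reader; the benefit of the paper's citation is that the identical statement is then available for every Banach function lattice at once, a point your closing sentence correctly identifies.
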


Applying Lemma \ref{l-wug2}, we obtain the following conclusion.

\begin{thm}\label{t1}
Let $1<p\le q<\fz$. If $\mscX$
supports a weak $(1,p)$-Poincar\'{e} inequality
and the measure $\mu$ is doubling, then
$$\nmpq \hookrightarrow\hmpq.$$
\end{thm}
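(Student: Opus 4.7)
The plan is to show that for any $f \in \nmpq$ and any $\Modpq$-weak upper gradient $g$ of $f$, a constant multiple of $[M(\wz g^s)]^{1/s}$ serves as a Haj\l asz gradient of $f$ in $\mpq$, where $\wz g$ is a suitable genuine upper gradient and $s \in (1, p)$ is a carefully chosen exponent.

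First, given $f \in \nmpq$ with a $\Modpq$-weak upper gradient $g \in \mpq$ satisfying $\|g\|_{\mpq} \le \|f\|_{\nmpq} + \ez$, I apply Lemma \ref{l-wug2} to replace $g$ by an everywhere-defined upper gradient $\wz g \in \mpq$ with $\|\wz g - g\|_{\mpq}$ arbitrarily small. Using the doubling property of $\mu$, the weak $(1, p)$-Poincar\'e inequality, and the Keith--Zhong self-improvement theorem (which applies precisely because $p > 1$), I obtain some $s \in (1, p)$ such that $\mscX$ supports a weak $(1, s)$-Poincar\'e inequality.

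Next, I carry out the standard telescoping/chaining argument. For Lebesgue points $x, y$ of $f$, write
$$|f(x) - f(y)| \le |f(x) - f_{B(x, d(x, y))}| + |f_{B(x, d(x, y))} - f_{B(y, d(x, y))}| + |f_{B(y, d(x, y))} - f(y)|,$$
decompose the first and third terms into dyadic telescoping sums over balls $B_i := B(x, 2^{-i} d(x,y))$ (respectively centered at $y$), bound each increment $|f_{B_i} - f_{B_{i+1}}|$ via the weak $(1, s)$-Poincar\'e inequality, and use doubling together with the summability of the geometric series to arrive, outside a set of measure zero, at
$$|f(x) - f(y)| \le C d(x, y) \bigl\{[M(\wz g^s)(x)]^{1/s} + [M(\wz g^s)(y)]^{1/s}\bigr\}.$$
The middle term $|f_{B(x, d(x, y))} - f_{B(y, d(x, y))}|$ is absorbed by comparing both averages with the average over a common enlarged ball and applying the Poincar\'e inequality once more. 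Hence $h := C [M(\wz g^s)]^{1/s}$ is a Haj\l asz gradient of $f$.

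Finally, to place $h$ in $\mpq$, I observe that $1 < s < p \le q$ gives $1 < p/s \le q/s$, so Lemma \ref{lem_bd of hl in morrey} ensures that $M$ is bounded on $\cm_{p/s}^{q/s}(\mscX)$. The identity $\|F^{1/s}\|_{\mpq}^s = \|F\|_{\cm_{p/s}^{q/s}(\mscX)}$, immediate from \eqref{a}, then yields
$$\|h\|_{\mpq} \lesssim \|\wz g^s\|_{\cm_{p/s}^{q/s}(\mscX)}^{1/s} = \|\wz g\|_{\mpq},$$
so that $\|f\|_{\hmpq} \le \|f\|_{\mpq} + \|h\|_{\mpq} \lesssim \|f\|_{\nmpq}$. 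I expect the main obstacle to be producing the exponent $s < p$ in the Poincar\'e inequality: naive chaining with the given $(1, p)$-Poincar\'e inequality produces the candidate $[M(\wz g^p)]^{1/p}$, whose Morrey-space membership would require $M$ to be bounded on $\cm_1^{q/p}(\mscX)$, which fails by the usual $L^1$-obstruction. The Keith--Zhong self-improvement, available precisely because $p > 1$, is what lets us circumvent this issue.
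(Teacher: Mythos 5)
Your proposal is correct and follows essentially the same route as the paper's proof: upgrade the $\Modpq$-weak upper gradient to a genuine one via Lemma \ref{l-wug2}, invoke the Keith--Zhong self-improvement \cite[Theorem 1.0.1]{kz} to get a weak $(1,r)$-Poincar\'e inequality with $r\in(1,p)$, obtain the pointwise Haj\l asz-type estimate $|f(x)-f(y)|\ls d(x,y)\{[M(g^r)(x)]^{1/r}+[M(g^r)(y)]^{1/r}\}$, and conclude via the Morrey-space boundedness of $M$ from Lemma \ref{lem_bd of hl in morrey}. The only difference is cosmetic: you carry out the telescoping/chaining argument explicitly, whereas the paper cites it as \cite[Theorem 3.2]{hk00}, and your closing remark correctly identifies why $r<p$ (rather than $r=p$) is essential, namely the failure of the maximal function bound on $\cm_1^{q/p}(\mscX)$.
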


\begin{proof}
Let $f\in\nmpq$. By \cite[Theorem 1.0.1]{kz}, we see that
$\mscX$ supports a weak $(1,r)$-Poincar\'{e} inequality for some $r\in(1,p)$.
By Lemma \ref{l-wug2},
there exists an upper gradient $g$ of $u$ such that
$$\|f\|_{\mpq}+\|g\|_{\mpq}\ls \|f\|_{\nmpq}.$$
Since $\mscX$ supports a weak $(1,r)$-Poincar\'{e} inequality
for some $r\in(1,p)$, by \cite[Theorem 3.2]{hk00},
we know that there exists a set $E\subset \mscX$ with
$\mu(E)=0$ such that, for all $x,\,y\in \mscX\setminus E$,
\begin{equation*}
|f(x)-f(y)|\ls d(x,y)\lf\{\lf[M(g^r)(x)\r]^{1/r}+\lf[M(g^r)(y)\r]^{1/r}\r\}.
\end{equation*}
Hence a positive constant multiple of $h:=[M(g^r)]^{1/r}$ is a Haj\l asz gradient of $f$.
Then, by Lemma \ref{lem_bd of hl in morrey}, we know that
$$\|f\|_{\hmpq}\ls \|f\|_{\mpq}+\|h\|_{\mpq}
\ls \|f\|_{\mpq}+\|g\|_{\mpq}\ls \|f\|_{\nmpq},$$
which completes the proof of Theorem \ref{t1}.
\end{proof}

\begin{rem}
When $p=q$, under the same assumptions as in Theorem \ref{t1}, it was proved
by Shanmugalingam in \cite[Theorem 4.9]{Sh} that $NM^p_p(\mscX)=HM^p_p(\mscX)$
with equivalent norms.
\end{rem}

Next we turn to consider the inverse embedding of Theorem \ref{t1}.

\begin{thm}\label{t2}
Let $1\le p\le q<\fz$. Then,
$$\hmpq\hookrightarrow\nmpq.$$
\end{thm}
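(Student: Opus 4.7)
The plan is to show that, for any $f\in\hmpq$ with Haj\l asz gradient $h\in\mpq$, a suitable representative $f^*$ of $f$ (equal to $f$ $\mu$-a.e.) admits $2h$ as a $\Modpq$-weak upper gradient. This would give
\[\|f\|_{\nmpq}\le\|f^*\|_{\mpq}+\|2h\|_{\mpq}=\|f\|_{\mpq}+2\|h\|_{\mpq}\lesssim\|f\|_{\hmpq},\]
which is exactly the desired continuous embedding.

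The heart of the argument is to isolate the family $\Gamma_\textup{bad}\subset\Gamma_{\rect}$ of curves along which the upper gradient inequality $|f^*(\gamma(0))-f^*(\gamma(l(\gamma)))|\le 2\int_\gamma h\,ds$ could fail, and to verify $\Modpq(\Gamma_\textup{bad})=0$. Let $E\subset\mscX$ denote a Borel $\mu$-null set (obtained, if necessary, by enlarging the Haj\l asz exceptional set via Borel regularity of $\mu$) outside of which the Haj\l asz inequality for $f$ and $h$ holds. Two natural pieces of $\Gamma_\textup{bad}$ are controlled directly by admissible-function constructions. First, for $\Gamma_1:=\{\gamma:\mathcal H^1(\gamma^{-1}(E))>0\}$, the Borel function $\rho:=\infty\cdot\chi_E$ is admissible and satisfies $\|\rho\|_{\mpq}=0$ because $\mu(E)=0$. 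Second, for $\Gamma_2:=\{\gamma:\int_\gamma h\,ds=\infty\}$, the functions $\rho_n:=h/n$ are admissible with $\|\rho_n\|_{\mpq}=\|h\|_{\mpq}/n\to 0$. For any $\gamma\in\Gamma_{\rect}\setminus(\Gamma_1\cup\Gamma_2)$, the set $[0,l(\gamma)]\setminus\gamma^{-1}(E)$ has full Lebesgue measure; choosing progressively finer partitions at Lebesgue points of $h\circ\gamma$ lying in this full-measure set, applying the Haj\l asz inequality on each subinterval together with $d(\gamma(s),\gamma(t))\le|s-t|$, and passing to mesh zero yields the telescoped bound
\[|f(\gamma(0))-f(\gamma(l(\gamma)))|\le 2\int_0^{l(\gamma)}h(\gamma(t))\,dt=2\int_\gamma h\,ds,\]
\emph{provided} both endpoints of $\gamma$ lie in $\mscX\setminus E$.

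The main obstacle is the endpoint case, in which $\gamma(0)\in E$ or $\gamma(l(\gamma))\in E$; since Theorem \ref{t2} does not presuppose that $\mu$ is doubling, one cannot simply pick the Lebesgue-point representative. My approach would be to extend $f$ across $E$ by a Haj\l asz-limit construction: for $x\in E$ with $h(x)<\infty$ (a condition valid $\mu$-a.e., since $h\in\mpq$ forces $h<\infty$ $\mu$-a.e.), set $f^*(x)$ to be the common limit of $f(y_n)$ along any sequence $y_n\in\mscX\setminus E$ with $y_n\to x$ and $\sup_n h(y_n)<\infty$; the Haj\l asz inequality on $\mscX\setminus E$ forces this limit to exist and to be independent of the sequence chosen. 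The modified $f^*$ agrees with $f$ $\mu$-a.e.\ (so they define the same class in $\mpq$) and the upper gradient inequality is now valid for $f^*$ on all of $\Gamma_{\rect}\setminus(\Gamma_1\cup\Gamma_2)$. A cleaner alternative that sidesteps this endpoint construction is to appeal to Mal\'y's general embedding $M^1X\hookrightarrow N^1X$ for quasi-Banach function lattices $X$ \cite{maly1442,maly1448}; since $\mpq$ is a Banach function lattice, Theorem \ref{t2} would then follow as an instance of that general result.
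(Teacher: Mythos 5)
Your overall strategy (pass to a good representative and show that a constant multiple of $h$ is a $\Modpq$-weak upper gradient) is the right one, and your treatment of $\Gamma_1$ and $\Gamma_2$ is standard and correct; but the step you yourself flag as the main obstacle contains a genuine gap. The definition of $f^*(x)$ for $x\in E$ presupposes the existence of a sequence $y_n\to x$ with $y_n\in\mscX\setminus E$ and $\sup_n h(y_n)<\fz$, and nothing guarantees this: $h\in\mpq$ only forces $h<\fz$ $\mu$-almost everywhere, and $h$ may blow up along \emph{every} sequence approaching a given point (for instance, near a fixed $x$ the function $y\mapsto [d(y,x)]^{-\ez}$ with small $\ez>0$ lies in the Morrey space, cf.\ the computation in Remark \ref{rd}). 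At such points your recipe leaves $f^*$ undefined, and you cannot simply discard the curves with an endpoint there: unlike $\Gamma_1$ and $\Gamma_2$, the family of curves having an endpoint in a fixed $\mu$-null set can have \emph{positive} $\Modpq$-modulus, so it is not negligible. Moreover, even at points where $f^*(x)$ is defined, verifying the inequality for a curve $\gamma$ with $\gamma(0)=x$ requires identifying $f^*(x)$ with $\lim_n f(\gamma(t_n))$ for some $t_n\downarrow0$, whereas along $\gamma$ you only know $h\circ\gamma\in L^1$, which permits $h(\gamma(t))\to\fz$ as $t\to0$, so $\gamma$ may carry no bounded-$h$ sequence at all; one must, say, select $t_n$ with $t_n\,h(\gamma(t_n))\to0$ and still reconcile the curve limit with the ambient limit, and this reconciliation is missing. (A smaller, repairable point: for the telescoping with good endpoints, Lebesgue points alone do not make endpoint Riemann sums of a merely integrable $h\circ\gamma$ converge to the integral; one needs, e.g., a Fubini/averaged-offset selection of partition points.)

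These missing steps are precisely the content of the reference the paper invokes: its proof of Theorem \ref{t2} is a short application of \cite[Theorem 1.1]{jyy} (``Haj\l asz gradients are upper gradients''), which produces representatives $\wz f=f$ and $\wz h=h$ $\mu$-almost everywhere such that $8\wz h$ is a genuine upper gradient of $\wz f$ for \emph{all} rectifiable curves; since Morrey norms do not see changes on null sets, the bound $\|f\|_{\nmpq}\ls\|f\|_{\hmpq}$ then follows at once (the constant $8$ versus your $2$ is immaterial for the embedding). Your fallback citation is also off-target: the Mal\'y preprints \cite{maly1442,maly1448} develop the Newtonian theory over quasi-Banach function lattices (completeness, and the Vitali--Carath\'eodory-type result used in this paper as Lemma \ref{l-wug2}), but they do not contain the embedding of the Haj\l asz-type space into the Newtonian-type space in that generality; that embedding is exactly the application of \cite{jyy}, which is why the paper cites it. As written, therefore, your proposal does not constitute a proof; completing it along your lines would in effect amount to reproving \cite[Theorem 1.1]{jyy}.
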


\begin{proof}
Let $f\in \hmpq$. Then, there exists a Haj\l asz gradient $h\in \cm^q_p(\mscX)$ of $f$
such that
\begin{equation*}
|f(x)-f(y)|\le d(x,y) [h(x)+h(y)],\quad x,\,y\in \mscX\setminus E,
\end{equation*}
for some $E$ of measure $0$, and
$\|f\|_{\cm^q_p(\mscX)}+\|h\|_{\cm^q_p(\mscX)}\ls \|f\|_{\hmpq}.$
It was proved in \cite[Theorem 1.1]{jyy} that, if $f,\,g\in L^1_\loc(\mscX)$ and
$g$ is a Haj\l asz gradient of $f$, then there exist $\wz f$ and $\wz g$
such that $\wz f=f$ and $\wz g=g$ almost everywhere, and $8\wz g$ is an upper
gradient of $\wz f$. Since $\wz f=f$ in $\hmpq$, we  identify
$f$ and $\wz f$. In this sense, $8\wz h$ is an upper
gradient of $f$.
Therefore,
$$
\|f\|_{\nmpq}\ls \|f\|_{\cm^p_q(\mscX)}+\lf\|\wz h\r\|_{\cm^q_p(\mscX)}
\sim\|f\|_{\cm^q_p(\mscX)}+\|h\|_{\cm^q_p(\mscX)}\ls \|f\|_{\hmpq},
$$
which completes the proof of Theorem \ref{t2}.
\end{proof}

Combining Theorems \ref{t1} and \ref{thm-h=n}, we have the following conclusion.

\begin{thm}\label{thm-h=n}
Let $1<p\le q<\fz$. If $\mscX$
supports a weak $(1,p)$-Poincar\'{e} inequality
and the measure $\mu$ is doubling, then $\nmpq=\hmpq$ with equivalent norms.
\end{thm}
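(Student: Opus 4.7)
The plan is to obtain Theorem \ref{thm-h=n} as an immediate consequence of the two embedding theorems already established in this section, since the statement is precisely the conjunction of Theorems \ref{t1} and \ref{t2} (the cross-reference in the sentence preceding the theorem appears to be a typo for \ref{t2}). Concretely, I would first invoke Theorem \ref{t1}, which under the hypotheses $1<p\le q<\fz$, a doubling measure $\mu$, and a weak $(1,p)$-Poincar\'e inequality, delivers the continuous embedding $\nmpq\hookrightarrow\hmpq$ together with the norm estimate $\|f\|_{\hmpq}\lesssim \|f\|_{\nmpq}$ for every $f\in\nmpq$. Then I would invoke Theorem \ref{t2}, which requires no structural assumption on $\mscX$ beyond the standing ones and yields the reverse embedding $\hmpq\hookrightarrow\nmpq$ with $\|f\|_{\nmpq}\lesssim \|f\|_{\hmpq}$ for every $f\in\hmpq$.

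Combining these two inequalities shows that the underlying sets of functions coincide and that the norms $\|\cdot\|_{\nmpq}$ and $\|\cdot\|_{\hmpq}$ are equivalent, which is exactly the conclusion. There is no genuine obstacle here, since the real work has been carried out in Theorems \ref{t1} and \ref{t2}; the former used the self-improvement of Poincar\'e inequalities from \cite{kz}, the pointwise inequality of \cite[Theorem 3.2]{hk00}, and the boundedness of the Hardy-Littlewood maximal operator on Morrey spaces (Lemma \ref{lem_bd of hl in morrey}), while the latter used the passage from Haj{\l}asz gradients to upper gradients from \cite[Theorem 1.1]{jyy}. The only thing to be careful about in writing the proof is to note explicitly that the hypotheses of Theorem \ref{t1} (doubling and weak $(1,p)$-Poincar\'e) are assumed, whereas Theorem \ref{t2} is unconditional, so that invoking both is legitimate under the stated assumptions.
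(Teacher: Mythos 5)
Your proposal is correct and coincides with the paper's own argument: the theorem is obtained there precisely by combining Theorem \ref{t1} (the embedding $\nmpq\hookrightarrow\hmpq$ under doubling and the weak $(1,p)$-Poincar\'e inequality) with the unconditional reverse embedding of Theorem \ref{t2}, and you are right that the paper's sentence ``Combining Theorems \ref{t1} and \ref{thm-h=n}'' contains a typo for Theorem \ref{t2}. Nothing is missing.
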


Next we consider the relations among the Haj{\l}asz-Morrey-Sobolev
space, the Newton-Morrey-Sobolev
space and the classical Morrey-Sobolev space on $\rn$.
Let $1\le p\le q<\fz$.
Recall that the classical
\emph{Morrey-Sobolev space} $\wmpq$ is defined by
$$WM^q_p(\rn):=\lf\{f\in \mathcal{M}^q_p(\rn):\
|\nabla f|\in \mathcal{M}^q_p(\rn)\r\},$$
where $\nabla f$ denotes the \emph{weak derivative} of $f$. The
\textit{norm} of $f\in
WM^q_p(\rn)$ is given by
$$\|f\|_{WM^q_p(\rn)}:=\|f\|_{\mathcal{M}^q_p(\rn)}
+\||\nabla f|\|_{\mathcal{M}^q_p(\rn)}.$$
Observe that  $WM^p_p(\rn)$ is just the Sobolev space $W^{1,p}(\rn)$.

\begin{thm}\label{t3}
Let $1<p\le q<\fz$. Then,
$$WM^q_p(\rn)=NM^q_p(\rn)=HM^q_p(\rn)$$
with equivalent norms.
\end{thm}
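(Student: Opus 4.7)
The plan is to first apply Theorem \ref{thm-h=n} to collapse two of the three spaces onto one another, and then use classical Haj{\l}asz theory on $\rn$, together with the boundedness of the Hardy--Littlewood maximal operator on Morrey spaces, to connect the third.

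Since $\rn$ supports the weak $(1,p)$-Poincar\'e inequality for every $p\ge 1$ and the Lebesgue measure is doubling, Theorem \ref{thm-h=n} applied to $\mscX=\rn$ immediately yields $\nmpqrn=\hmpqrn$ with equivalent norms whenever $1<p\le q<\fz$. Thus it suffices to establish $\wmpq=\hmpqrn$ with equivalent norms.

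For $\wmpq\hookrightarrow\hmpqrn$, I would invoke the classical pointwise estimate
$$|f(x)-f(y)|\le C|x-y|\bigl[M(|\nabla f|)(x)+M(|\nabla f|)(y)\bigr],$$
valid for almost every $x,y\in\rn$ and every $f\in W^{1,1}_{\loc}(\rn)$. This exhibits $CM(|\nabla f|)$ as a Haj{\l}asz gradient of $f$; Lemma \ref{lem_bd of hl in morrey}, applicable because $p>1$, then gives $\|M(|\nabla f|)\|_{\mpqrn}\ls\||\nabla f|\|_{\mpqrn}$, from which the embedding follows.

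For the converse $\hmpqrn\hookrightarrow\wmpq$, take $f\in\hmpqrn$ with Haj{\l}asz gradient $h\in\mpqrn$. Mirroring the proof of Theorem \ref{t2}, I would apply Theorem 1.1 of \cite{jyy} (after replacing $f$ and $h$ by suitable representatives) to deduce that $8h$ is an upper gradient of $f$. Because $\mpqrn\subset L^p_{\loc}(\rn)$ and $p>1$, we also have $f\in M^{1,p}_{\loc}(\rn)=W^{1,p}_{\loc}(\rn)$, so the weak gradient $\nabla f$ exists. On $\rn$, any upper gradient of a $W^{1,1}_{\loc}$-function dominates the weak gradient pointwise almost everywhere, by testing the upper gradient inequality along line segments in a countable dense set of directions and invoking the Lebesgue differentiation theorem. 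Applying this with the upper gradient $8h$ yields $|\nabla f(x)|\le 8h(x)$ for a.e. $x\in\rn$, whence $\||\nabla f|\|_{\mpqrn}\le 8\|h\|_{\mpqrn}$ and $f\in\wmpq$ with the desired norm estimate.

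The principal difficulty lies in this pointwise domination: a merely ball-by-ball $L^p$ bound $\||\nabla f|\|_{L^p(B)}\ls\|h\|_{L^p(B)}$ would not survive the supremum defining the Morrey norm with a uniform constant, so the route through the pointwise estimate $|\nabla f|\le 8h$ a.e. is essential. That estimate in turn relies on the conversion of the Haj{\l}asz gradient into an upper gradient via \cite{jyy} and on the classical upper-gradient--weak-gradient comparison on $\rn$.
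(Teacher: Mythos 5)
Your proof is correct, and its skeleton matches the paper's: both invoke Theorem \ref{thm-h=n} (noting that $\rn$ supports a $(1,p)$-Poincar\'e inequality and Lebesgue measure is doubling) to identify $\nmpqrn$ with $\hmpqrn$, and both prove $\wmpq\hookrightarrow\hmpqrn$ via the classical pointwise estimate $|f(x)-f(y)|\ls|x-y|[M(|\nabla f|)(x)+M(|\nabla f|)(y)]$ together with Lemma \ref{lem_bd of hl in morrey}. Where you diverge is the converse inclusion. The paper closes the loop by proving $\nmpqrn\hookrightarrow\wmpq$: given $f\in\nmpqrn$ with weak upper gradient $g\in\mpqrn$, it notes $f\in N^{1,p}(B)$ for every ball $B$ and cites \cite[Theorem A.2]{bb} to get $|\partial f/\partial x_i|\le g$ almost everywhere, directly yielding $|\nabla f|\in\mpqrn$. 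You instead prove $\hmpqrn\hookrightarrow\wmpq$: you first convert the Haj\l asz gradient $h$ into a genuine upper gradient $8\wz h$ via \cite[Theorem 1.1]{jyy} (exactly the device used in the paper's Theorem \ref{t2}), and then re-derive by hand, through the Fubini/ACL line-segment argument, the pointwise domination $|\nabla f|\le 8h$ a.e.\ --- which is essentially the content of the result the paper cites from \cite{bb}. Your route is thus a composition of the paper's two steps $\hmpqrn\hookrightarrow\nmpqrn\hookrightarrow\wmpq$, with the citation replaced by a self-contained sketch; it buys independence from \cite{bb} at the cost of the detour through \cite{jyy} and of having to justify $f\in W^{1,p}_{\loc}(\rn)$ beforehand, which your appeal to the local Haj\l asz--Sobolev equivalence (valid for $p>1$ on balls, as balls are extension domains) handles correctly. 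One small quibble: your closing remark that a ball-by-ball bound $\||\nabla f|\|_{L^p(B)}\ls\|h\|_{L^p(B)}$ ``would not survive the Morrey supremum with a uniform constant'' is imprecise --- a same-ball bound with a uniform constant \emph{would} transfer to the Morrey norm at once; the genuine obstruction is that arguments producing such bounds typically inflate the ball or let the constant depend on its radius. This does not affect your proof, which correctly proceeds through the pointwise estimate.
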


\begin{proof}
Observe that the conclusion of Theorem \ref{t3} when $1<p=q<\fz$
is just \cite[Theorem 1]{ha}. Thus, in what follows of this proof,
we always assume that $1<p<q<\fz$.

By Theorem \ref{thm-h=n}, it suffices to prove that
$WM^q_p(\rn)\hookrightarrow HM^q_p(\rn)$ and
$$NM^q_p(\rn)\hookrightarrow WM^q_p(\rn).$$
We first show that $WM^q_p(\rn)\hookrightarrow HM^q_p(\rn)$. Let
$f\in WM^q_p(\rn)$. By the definition of $\wmpq$, we see that
$|\nabla f|\in L^p(Q)$ for all cubes $Q$ in $\rn$ and
then, following the argument as in \cite[p.\,404]{ha},
we know that, for all Lebesgue points $x,\,y\in\rn$ of $f$,
\begin{eqnarray*}
|f(x)-f(y)|\ls |x-y|\lf[M(|\nabla f|)(x)+M(|\nabla f|)(y)\r].
\end{eqnarray*}
Hence a positive constant multiple of $M(|\nabla f|)$ is
a Haj\l asz gradient of $f$. Moreover, by
Definition \ref{def hmpu} and Lemma \ref{lem_bd of hl in morrey},
we further see that
\begin{eqnarray*}
\|f\|_{HM^q_p(\rn)}&\le& \|f\|_{\mathcal{M}^q_p(\rn)}
+\|M(|\nabla f|)\|_{\mathcal{M}^q_p(\rn)}\\
&\ls& \|f\|_{\mathcal{M}^q_p(\rn)}
+\||\nabla f|\|_{\mathcal{M}^q_p(\rn)}\approx \|f\|_{WM^q_p(\rn)}.
\end{eqnarray*}
This shows that $WM^q_p(\rn)\subset HM^q_p(\rn)$.

Next we prove $NM^q_p(\rn)\subset WM^q_p(\rn)$.
Let $f\in NM^q_p(\rn)$. Then, by Definition \ref{def nm space},
there exists $g\in\cm_p^q(\rn)$ such that $g$ is a weak upper
gradient of $f$. Moreover, for any ball $B\subset\rn$,
we have $g\in L^p(B)$, which implies that $f\in N^{1,p}(B).$
By \cite[Theorem A.2]{bb}, we know that, for $i\in\{1,\ldots,n\}$
and almost every $x\in B$, $\frac{\partial f}{\partial x_i}(x)$
exists and $|\frac{\partial f}{\partial x_i}(x)|$ is
controlled by $g(x)$. Since $B\subset\rn$ is arbitrary,
it follows that, for almost every $x\in\rn$,
$$\lf|\frac{\partial f}{\partial x_i}(x)\r|\le g(x),$$
which, together with $g\in\cm_p^q(\rn)$, implies that
$\frac{\partial f}{\partial x_i}\in\cm_p^q(\rn)$. Furthermore,
we have $|\nabla f|\in\cm_p^q(\rn)$, from which,
together with $f\in\cm_p^q(\rn)$, we deduce that
$f\in\wmpq$. This finishes the proof of Theorem \ref{t3}.
\end{proof}

We remark that Theorem \ref{t3} when $p=q$ goes back to the equivalence
between Sobolev spaces and Haj\l asz-Sobolev spaces on $\rn$ obtained in \cite{ha}.

\begin{rem}\label{rd}
(i) We remark that, \emph{for all $1< p<q<\fz$, the set
$C^1(\rn)$ of functions having continuous derivatives up
to order $1$ is not dense in $NM^q_p(\rn)$}.
To see this, by Theorem \ref{t3}, we only need to consider $WM^q_p(\rn)$.
For simplicity, we only consider the case $n=1$.
Let $\phi\in C_c^\fz(\rr)$ such that $0\le \phi\le1$, $\phi\equiv 1$ on $(-1,1)$,
and $\phi\equiv 0$ on $(-2,2)^c$. Write $g(x):=|x|^{1-1/q}\phi(x)$ for all $x\in\rr$.
Then, the function
\begin{equation*}
g'(x):=\left\{\begin{array}{lll}
(1-1/q)x^{-1/q}\phi(x)+x^{1-1/q}\phi'(x),\quad &x\in(0,\fz);\\
0,\quad &x=0;\\
-(1-1/q)(-x)^{-1/q}\phi(x)+(-x)^{1-1/q}\phi'(x),\quad &x\in(-\fz,0)
                  \end{array}
\right.
\end{equation*}
is a weak derivative of $g$.
Since it is known that
$|x|^{\az}\chi_{(-2,2)}(x)\in \cm_p^q(\rr)$ if and only if $\az\ge -1/q$,
we then see that $g\in WM_p^q(\rr)$.

Now we apply an approach from \cite[pp. 587-588]{z86} to show that
$g$ can not be approximated by $C^1(\rr)$ functions in $WM_p^q(\rr)$.
Indeed, it suffices to prove that $g'$ can not be
approximated by continuous functions in $\cm_p^q(\rr)$.
To see this, for any continuous function $h$,
write $N:=\sup_{x\in (-1,1)}|h(x)|^p<\fz$. Notice that $\phi\equiv 1$ on $(-1,1)$.
We then see that, for all $R\in(0,1)$,
 \begin{eqnarray*}
\int_{-R}^R \lf|g'(x)-h(x)\r|^p\,dx
&&\ge 2^{-p}\int_{-R}^R\lf|g'(x)\r|^p\,dx-2NR.
\end{eqnarray*}
Notice that
\begin{eqnarray*}
\int_{-R}^R\lf|g'(x)\r|^p\,dx
&&\ge (1-1/q)^p \int_0^R x^{-p/q}\,dx=\frac{(1-1/q)^p}{1-p/q}R^{1-p/q}.
\end{eqnarray*}
We know that \begin{eqnarray*}
\int_{-R}^R \lf|g'(x)-h(x)\r|^p\,dx
&&\ge 2^{-p}\frac{(1-1/q)^p}{1-p/q}R^{1-p/q}-2NR\\
&&=R^{1-p/q}\lf[2^{-p}\frac{(1-1/q)^p}{1-p/q}-2N R^{p/q}\r].
\end{eqnarray*}
Hence, taking $R$ small enough such that
$$2^{-p}\frac{(1-1/q)^p}{1-p/q}-2N R^{p/q}\ge2^{-p-1}\frac{(1-1/q)^p}{1-p/q},$$
 we then see that
\begin{eqnarray*}
\lf\|g'-h\r\|_{\cm_p^q(\rr)}^p&&\gs R^{p/q-1} \int_{-R}^R \lf|g'(x)-h(x)\r|^p\,dx\\
&&\gs 2^{-p-1}\frac{(1-1/q)^p}{1-p/q}>0.
\end{eqnarray*}
This implies the above claim.

(ii) We point out that the key property we used in (i) is the locally
boundedness of continuous functions, which ensures that the number $N$ is finite.
If we replace continuous functions $h$ by any locally bounded functions, then
the subsequent argument remains true.
From this observation, together with the well-known fact that
any Lipschitz function $f$
on $\rn$ is differentiable almost everywhere and the absolute
value $|\partial_i f|$ of its weak derivative $\partial_i f$ is
dominated by its Lipschitz constant $L_f$ almost everywhere,
we deduce that $g$ can not be approximated by any  Lipschitz function $f$
in the norm of  $WM_p^q(\rn)$.
Therefore, the \emph{set of Lipschitz functions is not dense in
$NM_p^q(\rn)$ when $1<p<q<\fz$}.
\end{rem}

\section{Boundedness of (fractional) maximal operators \label{s6}}

\hskip\parindent  This section is devoted to the boundedness of
(fractional) maximal operators on Morrey type spaces over
metric measure spaces.

In Subsection \ref{s6.1}, for a
geometrically doubling metric measure space $(\mscX,d,\mu)$
in the sense of Hyt\"onen \cite{h10}, we show, in Theorem \ref{t6.5} below, that the
modified maximal operator $M_0^{(\beta)}$ (see \eqref{eq-def-mfmo}
below) is bounded on the modified Morrey
space  $\cm_p^{q,(k)}(\mscX)$, which, when $(\mscX,d,\mu):=
(\rr^n,|\cdot|,\mu)$ with $\mu$ being a Radon measure
satisfying the polynomial growth condition (also called the
 non-doubling measure), was introduced by Sawano and Tanaka
\cite{st05}. As an application,  the boundedness of
the fractional maximal operator $M_\alpha^{(\beta)}$
on this space is also obtained in Proposition \ref{pf} below.

In Subsection \ref{s6.2}, if $\mu$ is a doubling measure,
as applications of Theorem \ref{t6.5} and Proposition \ref{pf},
we show the boundedness of the fractional
maximal operator $M_\alpha$ on Morrey
spaces (see Corollary \ref{t6.1} below),
from which, we further deduce, in Corollary \ref{c6.9} below,
the boundedness of the fractional maximal operator $\wz M_\alpha$
on Morrey spaces when $\mu$ further satisfies the measure lower bound condition
(see \eqref{Q-regular} below). If $\mu$ is doubling,
satisfies \eqref{Q-regular} and has the relative $1$-annular decay property
(see \eqref{d-adp} below), we then obtain
the boundedness of $\wz M_\az$ on $\hmpq$ (see Theorem \ref{tf} below).
Finally, we prove that, if $\mu$ is doubling and satisfies \eqref{Q-regular},
and $\mscX$ supports a weak $(1,p)$-Poincar\'e
inequality, then the discrete fractional maximal function $M_\alpha^\ast$
is bounded on $\nmpq$ (see Theorem \ref{thm-frac-max-bd1} below).

\subsection{Maximal operators on $\cm_p^{q,(k)}(\mscX)$\label{s6.1}}

\hskip\parindent
In 2010, Hyt\"onen \cite{h10} introduced the notion of
geometrically doubling metric measure spaces which
include both spaces of homogeneous type and
the Euclidean spaces with non-doubling measures satisfying
the polynomial growth condition as special cases; see also the monograph \cite{yyh13}
for some recent developments of this subject.

Now we recall the following notion of the geometrically doubling from \cite{h10},
which is also known as \emph{metrically doubling} (see, for example, \cite[p.\,81]{he}).

\begin{defn}\label{def gdp}
A metric space $(\mscX,d)$ is said to be
\emph{geometrically doubling}, if there exists $N_0\in\nn$ such that
any given ball contains no more than $N_0$ points at distance exceeding
half its radius.
\end{defn}

From the geometrically doubling property, we deduce
the following conclusion, which
is used later on.

\begin{prop}\label{prop cover}
Let $(\mscX,d)$ be a geometrically doubling metric space.
Then, for any ball $B(x,r)\subset\mscX$,
with $x\in\mscX$ and $r\in (0,\fz)$, and any $n_1\ge n_2>1$,
there exist $r_0\in(0,\fz)$  and $\wz{N}$ balls
$\{B(x_i,r_0)\}_{i=1}^{\wz{N}}$ such that $n_1B(x_i,r_0)\subset n_2B(x,r)$
for all $i\in\{1,\ldots,\wz N\}$ and
$$B(x,r)\subset\bigcup_{i=1}^{\wz N}B(x_i,r_0),$$
where $\wz{N}\in\nn$ depends only on $n_1,\ n_2$ and
the constant $N_0$ in Definition \ref{def gdp}.
\end{prop}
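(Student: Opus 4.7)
The plan is to use two basic ingredients from the theory of geometrically doubling spaces: a containment argument that forces $n_1B(x_i,r_0)\subset n_2B(x,r)$ when the centers $x_i$ lie inside $B(x,r)$ and $r_0$ is small enough relative to $r$, and a maximal-separation/iterated-doubling argument that produces a covering of $B(x,r)$ by balls of radius $r_0$ with cardinality controlled by $N_0$.

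First, I would set $r_0:=(n_2-1)r/n_1$, which is positive since $n_2>1$. Then, for any point $x'\in B(x,r)$ and any $z\in n_1B(x',r_0)$, the triangle inequality gives
\begin{equation*}
d(x,z)\le d(x,x')+n_1r_0<r+(n_2-1)r=n_2r,
\end{equation*}
so $n_1B(x',r_0)\subset n_2B(x,r)$ automatically holds whenever $x'\in B(x,r)$. This reduces the problem to producing a covering of $B(x,r)$ by finitely many balls $B(x_i,r_0)$ with $x_i\in B(x,r)$, quantitatively.

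Next I would construct the centers by taking a maximal $(r_0/2)$-separated subset $\{x_i\}_{i=1}^{\wz N}$ of $B(x,r)$, i.e., a maximal collection with $d(x_i,x_j)\ge r_0/2$ for $i\ne j$. Maximality guarantees that for every $y\in B(x,r)$ there is some $x_i$ with $d(y,x_i)<r_0/2<r_0$, so $B(x,r)\subset\bigcup_{i=1}^{\wz N}B(x_i,r_0)$, and by the previous step each such ball satisfies $n_1B(x_i,r_0)\subset n_2B(x,r)$. What remains is to bound $\wz N$ by a constant depending only on $n_1$, $n_2$ and $N_0$.

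The main obstacle, and the only real work in the proof, is this cardinality estimate, which I would handle by iterating Definition \ref{def gdp}. The geometrically doubling condition states that any ball of radius $s$ contains at most $N_0$ points with pairwise distances exceeding $s/2$; equivalently, any ball of radius $s$ can be covered by $N_0$ balls of radius $s/2$. Iterating this $k$ times covers $B(x,r)$ by $N_0^k$ balls of radius $r/2^k$. Choosing the smallest $k\in\nn$ with $r/2^k\le r_0/4=(n_2-1)r/(4n_1)$, i.e., $k=\lceil\log_2(4n_1/(n_2-1))\rceil$, each such small ball has diameter strictly less than $r_0/2$ and therefore contains at most one of the separated points $x_i$. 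Hence $\wz N\le N_0^k$, a constant depending only on $n_1$, $n_2$ and $N_0$, which completes the proof.
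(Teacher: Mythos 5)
Your proof is correct and follows essentially the same route as the paper: a triangle-inequality argument showing that the $n_1$-dilates of small balls with suitably chosen radius (your $r_0=(n_2-1)r/n_1$ versus the paper's $r/2^k$ with $k=\lfloor\log_2\frac{n_1+1}{n_2-1}\rfloor+1$) land inside $n_2B(x,r)$, combined with an iterated-doubling bound on the number of covering balls. The only difference is that you make the counting step self-contained, via a maximal $(r_0/2)$-separated net and $k$-fold iteration of Definition \ref{def gdp}, whereas the paper delegates exactly this step to the implication (1)~$\Rightarrow$~(2) of \cite[Lemma 2.3]{h10}, whose proof is the same net argument.
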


\begin{proof}
Let $n_1$ and $n_2$ be as in Proposition \ref{prop cover}, and
$$k:=\lf\lfloor\log_2\frac{n_1+1} {n_2-1}\r\rfloor+1,$$
where $\lfloor t\rfloor$ denotes the
\emph{maximal integer not more than $t\in\rr$}. We claim that, for any $y\in\mscX$
and ball $B(x,r)\subset\mscX$ with $x\in\mscX$ and $r\in (0,\fz)$, if $B(y,\frac r{2^k})\cap
B(x,r)\ne\emptyset$, then $n_1B(y,\frac r{2^k})
\subset n_2B(x,r)$. Indeed, by choosing $z\in B(y,\frac r{2^k})\cap B(x,r)$ and
observing that $k>\log_2\frac{n_1+1}{n_2-1}$, we have
$$d(x,y)\le d(x,z)+d(z,y)
<\lf(1+\frac 1{2^k}\r)r<\lf(n_2-\frac{n_1}{2^k}\r)r.$$
Thus, for all
$w\in n_1B(y,\frac r{2^k})$,
$$d(w,x)\le d(w,y)+d(y,x)<\frac{n_1r}{2^k}+
\lf(n_2-\frac{n_1}{2^k}\r)r=n_2r,$$
which shows the above claim. Then, by repeating
the proof that (1) implies (2) in
\cite[Lemma 2.3]{h10}, we obtain the desired conclusion, which completes
the proof of Proposition \ref{prop cover}.
\end{proof}

Now we recall the definition of the modified Morrey space, which,
when $(\mscX,d,\mu):=
(\rr^n,|\cdot|,\mu)$ with $\mu$ being a Radon measure
satisfying the polynomial growth condition,
was originally introduced by
Sawano and Tanaka \cite{st05}.

\begin{defn}\label{def frac-morrey}
Let $k\in(0,\fz)$, $1\le p\le q<\fz$ and $\mscX$ be  a
metric measure space. The \emph{modified Morrey space}
$\cm_p^{q,(k)}(\mscX)$ is defined as
\begin{equation*}
\cm_p^{q,(k)}(\mscX):=\lf\{f\in L_\loc^{p}(\mscX):\ \|f\|_{\cm_p^{q,(k)}(\mscX)}<\fz\r\},
\end{equation*}
where
\begin{equation*}
\|f\|_{\cm_p^{q,(k)}(\mscX)}:=\sup_{B(x,r)\subset\mscX}[\mu(B(x,kr))]^{1/q-1/p}
\lf[\int_{B(x,r)}|f(y)|^p\,d\mu(y)\r]^{1/p},
\end{equation*}
where the supremum is taken over all balls $B(x,r)$, with $x\in\mscX$ and $r\in (0,\fz)$, of $\mscX$.
\end{defn}

Recall that a \emph{geometrically doubling metric measure space $(\mscX,d,\mu)$}
means that $(\mscX,d)$ is geometrically doubling and $\mu$ is a non-negative Radon measure
on $(\mscX,d)$.

\begin{prop}\label{equiv}
Let $(\mscX,d,\mu)$ be a geometrically doubling metric measure space
and $1\le p\le q<\fz$. Then, the space $\cm_p^{q,(k)}(\mscX)$ is
independent of the choice of $k\in(1,\fz)$.
\end{prop}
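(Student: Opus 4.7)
The plan is to establish that, for any $1<k_1\le k_2<\fz$, the norms $\|\cdot\|_{\cm_p^{q,(k_1)}(\mscX)}$ and $\|\cdot\|_{\cm_p^{q,(k_2)}(\mscX)}$ are comparable, with constants depending only on $k_1,k_2,p,q$ and the geometric doubling constant $N_0$. One inclusion is immediate: since $p\le q$, the exponent $1/q-1/p$ is non-positive, and combining this with $\mu(B(x,k_1r))\le\mu(B(x,k_2r))$ yields the pointwise inequality $[\mu(B(x,k_2r))]^{1/q-1/p}\le[\mu(B(x,k_1r))]^{1/q-1/p}$. Taking the supremum over balls $B(x,r)$ in the defining expression therefore gives $\|f\|_{\cm_p^{q,(k_2)}(\mscX)}\le\|f\|_{\cm_p^{q,(k_1)}(\mscX)}$.

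The reverse direction is where Proposition \ref{prop cover} enters. Given an arbitrary ball $B(x,r)\subset\mscX$, I would invoke that proposition with the parameter choice $n_1:=k_2$ and $n_2:=k_1$, admissible because $k_2\ge k_1>1$. This yields a common radius $r_0\in(0,\fz)$ and a finite family $\{B(x_i,r_0)\}_{i=1}^{\wz N}$ covering $B(x,r)$, where $\wz N$ depends only on $k_1,k_2$ and $N_0$, and such that $B(x_i,k_2r_0)\subset B(x,k_1r)$ for every $i\in\{1,\ldots,\wz N\}$. Subadditivity of the integral, combined with the defining inequality of the $\cm_p^{q,(k_2)}$-norm applied to each ball $B(x_i,r_0)$, then gives
$$\int_{B(x,r)}|f(y)|^p\,d\mu(y)\le\sum_{i=1}^{\wz N}\|f\|_{\cm_p^{q,(k_2)}(\mscX)}^p\lf[\mu(B(x_i,k_2r_0))\r]^{1-p/q},$$
and the inclusion $B(x_i,k_2r_0)\subset B(x,k_1r)$ together with $1-p/q\ge 0$ allows one to replace $\mu(B(x_i,k_2r_0))$ by $\mu(B(x,k_1r))$ inside each summand, producing
$$\int_{B(x,r)}|f(y)|^p\,d\mu(y)\le\wz N\,\|f\|_{\cm_p^{q,(k_2)}(\mscX)}^p\lf[\mu(B(x,k_1r))\r]^{1-p/q}.$$
Multiplying by $[\mu(B(x,k_1r))]^{p/q-1}$, extracting the $p$-th root, and passing to the supremum over $B(x,r)\subset\mscX$ then deliver $\|f\|_{\cm_p^{q,(k_1)}(\mscX)}\le\wz N^{1/p}\|f\|_{\cm_p^{q,(k_2)}(\mscX)}$, which finishes the equivalence.

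The only step requiring any thought is matching the parameters in Proposition \ref{prop cover} correctly so that the $k_2$-inflation of each small covering ball is captured by $B(x,k_1r)$; once $n_1=k_2$ and $n_2=k_1$ are identified, and one notes that the hypothesis $k_1>1$ is precisely what makes $n_2>1$ legal, the rest of the argument is mechanical. I therefore do not anticipate any substantive obstacle beyond this bookkeeping.
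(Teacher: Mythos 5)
Your proposal is correct and follows essentially the same route as the paper: the easy inclusion via monotonicity of $\mu$ and the sign of $1/q-1/p$, then the reverse inclusion by covering $B(x,r)$ with the balls from Proposition \ref{prop cover} (with $n_1=k_2$, $n_2=k_1$) so that $B(x_i,k_2r_0)\subset B(x,k_1r)$. The only cosmetic difference is that you sum $p$-th powers of the integrals (yielding the constant $\wz N^{1/p}$) whereas the paper sums the $L^p$-quantities directly via $(\sum_i a_i)^{1/p}\le\sum_i a_i^{1/p}$ (yielding $\wz N$), which changes nothing of substance.
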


\begin{proof}
Let $k_1$, $k_2\in(1,\fz)$. We need to show that $\cm_p^{q,(k_1)}(\mscX)$
and $\cm_p^{q,(k_2)}(\mscX)$ coincide with equivalent norms.
To this end, without loss of generality, we may assume that $k_1<k_2$.
By Definition \ref{def frac-morrey},
we easily find that $\cm_p^{q,(k_1)}(\mscX)\subset\cm_p^{q,(k_2)}(\mscX).$
Thus, we still need to show the inverse embedding. Let $B$ be a ball
in $\mscX$. By Proposition \ref{prop cover}, there exist $\wz N$ balls
$\{B_i\}_{i=1}^{\wz N}$ with the same radius such that,
for all $i\in\{1,\ldots,\wz N\}$,
$k_2B_i\subset k_1B$ and $B\subset\cup_{i=1}^{\wz N}B_i$, where
$\wz N$ depends only on $k_1$, $k_2$ and $N_0$ in
Definition \ref{def gdp}. By these, we see that
\begin{eqnarray*}
[\mu(k_1B)]^{1/q-1/p}\lf[\int_B|f(x)|^p\,d\mu(x)\r]^{1/p}
&&\le\sum_{i=1}^{\wz N}[\mu(k_1B)]^{1/q-1/p}\lf[\int_{B_i}|f(x)|^p\,d\mu(x)\r]^{1/p}\\
&&\le\sum_{i=1}^{\wz N}[\mu(k_2B_i)]^{1/q-1/p}\lf[\int_{B_i}|f(x)|^p\,d\mu(x)\r]^{1/p}\\
&&\le\wz N\|f\|_{\cm_p^{q,(k_2)}(\mscX)}.
\end{eqnarray*}
By the arbitrariness of $B$ and Definition \ref{def frac-morrey}, we
conclude that $$\|f\|_{\cm_p^{q,(k_1)}(\mscX)}\le\wz{N}\|f\|_{\cm_p^{q,(k_2)}(\mscX)},$$
which further implies that $\cm_p^{q,(k_2)}(\mscX)\subset\cm_p^{q,(k_1)}(\mscX)$ and hence
completes the proof of Proposition \ref{equiv}.
\end{proof}

Recall that, for $\az\in[0,1]$ and $\beta\in[1,\fz)$, the \emph{modified fractional
maximal operator} $M^{(\beta)}_\az$ is defined by setting, for all
$f\in L_\loc^1(\mscX)$ and $x\in\mscX$,
\begin{equation}\label{eq-def-mfmo}
M^{(\beta)}_\az f(x):=\sup_{r>0}{[\mu(B(x,\beta r))]^{\alpha-1}}
\int_{B(x,r)}|f(y)|\,d\mu(y).
\end{equation}
In particular, we write $M_\az :=M^{(1)}_\az$.

The maximal operator $M^{(\beta)}_0$ where $\beta\in(1,\fz)$ is bounded on the modified
Morrey spaces. To prove this, we need the following technical lemma.

\begin{lem}\label{lem-cover}
Let $\beta\in(1,\fz)$ and $(\mscX,d)$ be a geometrically doubling metric
space. Suppose that $\cb:=\{B(x_\lz,r_\lz)\}_{\lz\in\Lambda}$ such that
$\sup_{\lambda\in\Lambda}r_\lambda<\fz$. Then, there exist $J_\bz\in\nn$, depending only on
$\bz$ and $N_0$ in Definition \ref{def gdp}, and sub-families
of balls of $\cb$, $\cb_i:=\{B(x_\lz,r_\lz)\}_{\lambda\in\Lambda_i}$ with
$i\in\{1,\ldots,J_\bz\}$, such that
\begin{enumerate}
\item[\rm(i)] for each $i\in\{1,\ldots,J_\bz\}$, $\cb_i$ consists of
disjoint balls;
\item[\rm(ii)] for any $\lambda\in\Lambda$, there exists $\lambda'\in
\cup_{i=1}^{J_\bz}\Lambda_i$ such that $B(x_\lambda,r_\lambda)\subset
B(x_{\lambda'},\beta r_{\lambda'}).$
\end{enumerate}
\end{lem}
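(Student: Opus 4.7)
The plan is a two-step argument in the style of Besicovitch's covering theorem. First, I will extract a single ``$\beta$-dominating'' subcollection $\ca \subset \cb$; second, I will partition $\ca$ into $J_\beta$ pairwise disjoint subfamilies using the geometrically doubling property.

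For the first step, I set $R := \sup_\lambda r_\lambda < \infty$ and fix a scale factor $\delta = \delta(\beta) \in (0,1)$ chosen small enough that any two intersecting balls whose radii both lie in an interval of the form $(\delta s, s]$ satisfy the property that one is contained in the $\beta$-enlargement of the other. Since the relevant inequality $1 + 2/\delta \le \beta$ forces a restrictive choice when $\beta$ is close to $1$, I first apply Proposition \ref{prop cover} with $n_1 = \beta$ and $n_2 := (1+\beta)/2$ to replace each $B(x_\lambda, r_\lambda)$ by a uniformly bounded number (depending on $\beta$ and $N_0$) of strictly smaller subballs with controllable enlargements; this reduction allows the scale condition on $\delta$ to be met at the cost of multiplying $J_\beta$ by a bounded factor. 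I then partition the indices into generations $\Lambda^{(n)} := \{\lambda : r_\lambda \in (\delta^{n+1}R,\, \delta^n R]\}$ for $n \in \nn \cup \{0\}$, and recursively, in each generation, use Zorn's lemma to select a maximal disjoint subcollection $\Lambda_\ast^{(n)}$ among those $\lambda \in \Lambda^{(n)}$ whose balls are not yet $\beta$-contained in a ball chosen from $\bigcup_{k<n} \Lambda_\ast^{(k)}$. Setting $\ca := \bigcup_n \Lambda_\ast^{(n)}$, condition (ii) then holds: if $\lambda \in \Lambda \setminus \ca$, either $B(x_\lambda, r_\lambda)$ is already $\beta$-covered by some earlier-generation choice, or the maximality within its own generation combined with the scale choice forces it to sit inside the $\beta$-enlargement of some intersecting ball in $\Lambda_\ast^{(n)}$.

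For the second step, I partition $\ca$ greedily: well-order $\ca$ by nonincreasing radius and assign each ball the smallest index $i \in \nn$ such that no previously assigned ball in $\cb_i$ intersects it. The number of colors used equals one plus the maximum, over $\lambda \in \ca$, of the number of already-processed balls in $\ca$ intersecting $B(x_\lambda, r_\lambda)$. Any such earlier ball $B(x_{\lambda'}, r_{\lambda'})$ has $r_{\lambda'} \ge r_\lambda$ and its center in $B(x_\lambda,\, 2 r_{\lambda'})$; summing over generations and using that balls in each $\Lambda_\ast^{(n)}$ are pairwise disjoint, Proposition \ref{prop cover} bounds the number of such disjoint balls of each scale contributing to a fixed region in terms of $N_0$. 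A telescoping sum over the finitely many relevant scales yields the required $J_\beta$, depending only on $\beta$ and $N_0$.

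The main obstacle will be the fine-tuning of the scale factor $\delta$ together with the preliminary refinement via Proposition \ref{prop cover}: one must ensure that all auxiliary constants introduced (the number $\wz N$ from the refinement, the number of contributing generations, and the per-generation disjoint-ball bound) are uniform across $\lambda$ and ultimately collapse into a single constant $J_\beta$ that depends on $\beta$ and $N_0$ alone. Keeping this bookkeeping clean, especially for $\beta$ very close to $1$, is the principal subtlety.
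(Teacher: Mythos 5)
Your second step (the greedy coloring) is in the right spirit, but the first step contains a genuine gap that your own scale condition already exposes: the within-generation containment is simply false for $\bz\in(1,3]$. If an unselected ball is only known to \emph{meet} a selected ball of comparable radius, the best available inclusion is $B(x_\lz,r_\lz)\subset B(x_{\lz'},2r_\lz+r_{\lz'})$, so you need $r_\lz\le\frac{\bz-1}{2}\,r_{\lz'}$; with both radii in a generation of ratio $1/\dz$, this is your inequality $1+2/\dz\le\bz$, which has no solution $\dz\in(0,1)$ when $\bz\le3$. This is not a bookkeeping subtlety but an obstruction to the whole plan: on $\rr$, take the two-ball family $B(0,1)$, $B(t,1)$ with $\bz-1<t<2$; the balls intersect and have equal radii, hence lie in the same generation, so any disjoint selection keeps exactly one of them, and the other is contained in the $\bz$-enlargement of neither. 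Thus no single family $\ca$ that is disjoint within generations can satisfy (ii) by itself when $\bz<3$ --- the lemma's conclusion is only salvaged by allowing $J_\bz$ disjoint families. The paper's proof exploits exactly this freedom: in each $\sqrt\bz$-adic generation it selects not a maximal \emph{disjoint} subfamily but a maximal family whose \emph{centers} are $R\sqrt{\beta^{-j}}(\sqrt\bz-1)$-separated (a net at a scale much smaller than the radii, so the selected balls overlap heavily). Maximality of the net then puts every ball of the generation within center-distance $(\sqrt\bz-1)\cdot(\mathrm{scale})$ of a selected ball, which gives $B(x_\lz,r_\lz)\subset B(x_{\lz'},\bz r_{\lz'})$ even for $\bz$ arbitrarily close to $1$; disjointness is recovered \emph{a posteriori} by splitting each net into $L_\bz$ disjoint subfamilies via geometric doubling, and by grouping generations whose indices differ by multiples of $N_\bz$, where $N_\bz$ is chosen so that selected balls from generations $N_\bz$ apart cannot intersect (otherwise the smaller one would already have been $\bz$-covered, contradicting its selection).

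Your proposed repair via Proposition \ref{prop cover} does not close this gap, because the lemma demands that the disjoint families $\cb_i$ be subfamilies of the \emph{original} collection $\cb$ and that the dominating ball in (ii) be a single original ball $B(x_{\lz'},r_{\lz'})$. If you run the Vitali selection on the subballs, disjointness of selected subballs says nothing about disjointness of their parent balls (two disjoint subballs may share a parent or have overlapping parents), and the several subballs covering an unselected $B(x_\lz,r_\lz)$ may be dominated by subballs of \emph{different} parents, so no single index $\lz'$ with $B(x_\lz,r_\lz)\subset B(x_{\lz'},\bz r_{\lz'})$ is produced. A secondary point: in your coloring step, per-generation doubling counts alone do not bound the number of colors, since a small selected ball could a priori meet selected balls from every earlier generation; you need the cross-generation non-intersection claim (the paper's $N_\bz$ argument, resting on the fact that selected balls were never $\bz$-covered by earlier selections), which your exclusion rule does make available but which you did not invoke. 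For $\bz>3$ your skeleton can be made to work with $\dz=2/(\bz-1)$, but the lemma is applied in the paper (e.g., in Theorem \ref{thm-cmk-b>1} and Theorem \ref{t6.5}) for all $\bz\in(1,\fz)$, so the regime your argument misses is essential.
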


\begin{proof}
Let $R:=\sup_{\lambda\in\Lambda}r_\lambda$ and, for all
$j\in\zz_+$,
\begin{equation}
\ca_j:=\lf\{B(x_\lambda,r_\lambda):\ \lambda\in\Lambda,\
R\sqrt{\beta^{-j-1}}< r_\lambda\le R\sqrt{\beta^{-j}}\r\}.
\end{equation}
Here, we need $\bz\in(1,\fz)$ and, otherwise, $A_j=\emptyset$
for all $j\in\zz_+$. Let $\cd_0\subset\ca_0$ be a maximal subset in $\ca_0$ such that, for
any two distinct balls $B(x_{\lambda},r_{\lambda})$ and
$B(x_{\lambda'},r_{\lambda'})$ in $\cd_0$, it holds true that
$d(x_{\lambda},x_{\lambda'})>R(\sqrt{\beta}-1)$. By
such a choice, we know that, for any ball $B(x_\lambda,
r_\lambda)\in\ca_0$, there exists $B(x_{\lambda'},r_{\lambda'})
\in\cd_0$ such that $d(x_\lambda, x_{\lambda'})\le R(\sqrt\beta-1)$.
Furthermore, from $r_\lambda\le R$ and $\beta r_{\lambda'}> R
\sqrt\beta$, it follows that
$$B(x_\lambda,r_\lambda)\subset
B(x_{\lambda'},\beta r_{\lambda'}).$$

Let $\ce_0$ be the collection of balls $B(x_\lambda,r_\lambda)$
which belong to $\cb$ and satisfy that, for some $B(x_{\lambda'},r_{\lambda'})
\in\cd_0$, $B(x_\lambda,r_\lambda)\subset B(x_{\lambda'},\beta r_{\lambda'})$.
Obviously, $\ca_0\subset \ce_0.$

Let $m\ge1$. We now define $\cd_m$ and $\ce_m$
recursively. Suppose that $\cd_j$ and $\ce_j$
for $j\in\{0,\ldots,m-1\}$ has  already  been defined. Let $\cd_m\subset\ca_m\setminus
\cup_{j=0}^{m-1}\ce_j$ be a maximal subset satisfying that,
for all distinct balls $B(x_\lambda,r_\lambda)$ and $B(x_{\lambda'},
r_{\lambda'})$ in $\cd_m$, it holds true that
$$d(x_\lambda,x_{\lambda'})
>R\sqrt{\beta^{-j}}(\sqrt\beta-1).$$
Let $\ce_m$ be the collection
of balls $B(x_\lambda,r_\lambda)$ which belong to $\cb$ and
satisfy that, for some $B(x_{\lambda'},r_{\lambda'})\in\cd_m$,
$B(x_\lambda,r_\lambda)\subset B(x_{\lambda'},\beta r_{\lambda'})$.
Notice that, for any ball $B\in\ca_m$, we have either $B\in\ce_j$
for some $j\in\{0,\ldots,m-1\}$ or
$B\in\ca_m\setminus\cup_{j=1}^{m-1}\ce_{j}$. In the first case,
we can find a ball $B(x_{\lambda'},r_{\lambda'})\in\cd_j$ such that
$B\subset B(x_{\lambda'},\beta r_{\lambda'})$. In the second case, we can
find $B(x_{\lambda'},r_{\lambda'})\in\cd_m$ such that $B\subset
B(x_{\lambda'},\beta r_{\lambda'})$.

Due to the geometrically doubling condition, we can partition each
$\cd_j$ into disjoint sub-families, $\cd_{j,1},\ldots,\cd_{j,L_\beta}$,
where $L_\beta$ is a positive constant depending only on $\beta$ and the
geometrically doubling constant $N_0$, since, for
any $j\in\zz_+$ and any $B:=B(x_\lambda,r_\lambda)\in\cd_j$,
there are at most $L_\beta$ balls in $\cd_j$ intersect $B$. Indeed,
let $\cf_j$ be the collection of balls $B':=B(x_{\lambda'},r_{\lambda'})$
which belong to $\cd_j$ and intersect $B$. Let $y\in B'$ and $z\in B\cap B'$.
Then,
$$d(y,x_\lambda)\le d(y,x_{\lambda'})+d(x_{\lambda'},z)+d(z,x_\lambda)
<2r_{\lambda'}+r_\lambda\le3R\sqrt{\beta^{-j}}.$$
Thus, $B'\subset
B(x_\lambda,3R\sqrt{\beta^{-j}}).$ On the other hand,
by the choice of $\cd_j$, we know that
$$d(x_\lambda,x_{\lambda'})
>R\sqrt{\beta^{-j}}(\sqrt\beta-1)$$
and hence
$$B\lf(x_{\lambda}, \frac{\sqrt{\bz}-1}2r_{\lambda}\r)\bigcap
B\lf(x_{\lambda'},\frac{\sqrt{\bz}-1}2r_{\lambda'}\r)=\emptyset.$$
By the geometrically doubling property
of $\mscX$ and \cite[Lemma 2.3]{h10}, we see that there exists a constant
$L_\beta$, depending on $N_0$ and $\beta$, such that $\cf_j$ has no more than
$L_\beta$ balls.

Let $N_\beta\in\nn$ satisfy
\begin{equation}\label{5.3}
1+2\sqrt{\beta^{-N_\beta}}<\sqrt{\beta}.
\end{equation}
We claim that, if $j_1\ge j_2+N_\beta$, then, for any pair of balls,
$(B_1,B_2)\in\cd_{j_1}\times\cd_{j_2}$, $B_1$ and $B_2$ do not intersect.
To see this, assume that $B_1\cap B_2\neq \emptyset$ and $x\in B_1\cap B_2$.
Then, by \eqref{5.3}, for any $y\in B_1$, we have
\begin{eqnarray*}
d(y,x_2)&&\le d(y,x)+d(x,x_2)
<2r(B_1)+r(B_2)\le 2R\sqrt{\beta^{-j_1}}+R\sqrt{\beta^{-j_2}}\\
&&\le R\sqrt{\beta^{-j_2}}(2\sqrt{\beta^{-N_\bz}}+1)\le
R\sqrt{\beta^{-j_2-1}}\beta<\beta r(B_2),
\end{eqnarray*}
where $x_i$ and $r(B_i)$ denote the center and
the radius of $B_i$, for $i\in\{1,2\}$, respectively.
Thus, $B_1\subset \beta B_2$ and hence belongs to $\ce_{j_2}$,
which contradicts to
the definition of $\cd_{j_1}$, since $\cd_{j_1}\cap \ce_{j_2}=\emptyset$.
Thus, the above claim holds true.

Therefore, if, for $i\in\{1,\ldots,N_\bz\}$ and $n\in\{1,\ldots,L_\bz\},$ let
$$\cb_{i,n}:=\bigcup_{j=0}^\fz\cd_{N_\beta j+i,n}.$$
Then, $\{B_{i,n}:\ i\in\{1,\ldots,N_\bz\},\ n\in\{1,\ldots,L_\bz\}\}$
are the desired families, which completes the proof of Lemma \ref{lem-cover}.
\end{proof}

The boundedness of the modified maximal operator on $L^p(\mscX)$
could be deduced from the above lemma by borrowing some ideas
used in the proof of \cite[Section 3.1, Theorem 1]{s93}. We give
some details as follows.

\begin{thm}\label{thm-cmk-b>1}
Let $\beta\in(1,\fz)$ and $(\mscX,d,\mu)$ be a
geometrically doubling metric measure
space.

${\rm(i)}$ Then, there exists a positive constant $C$ such that,
 for all $\lambda\in(0,\fz)$ and $f\in L^1(\mscX)$,
 \begin{equation}
\mu\lf(\lf\{x\in\mscX:\ M_0^{(\beta)}(x)>\lambda\r\}\r)\le \frac{C}{\lambda}\|f\|_{L^1(\mscX)}.
\end{equation}

${\rm(ii)}$ Let $p\in(1,\fz]$. Then,
there exists a positive constant $C$ such that, for all $f\in L^p(\mscX)$,
\begin{equation}
\lf\|M_0^{(\beta)}f\r\|_{L^p(\mscX)}\le C\|f\|_{L^p(\mscX)}.
\end{equation}
\end{thm}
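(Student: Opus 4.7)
The plan is to mimic the classical Hardy--Littlewood--Wiener argument, replacing the standard Vitali covering lemma by Lemma \ref{lem-cover}, which is tailor-made for the geometrically doubling setting where $\mu$ itself need not be doubling.

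For part (i), fix $\lambda\in(0,\fz)$ and $f\in L^1(\mscX)$, and set $E_\lambda:=\{x\in\mscX:\ M_0^{(\beta)}f(x)>\lambda\}$. For each $x\in E_\lambda$, the definition of the supremum in \eqref{eq-def-mfmo} produces some $r_x\in(0,\fz)$ with
$$\int_{B(x,r_x)}|f(y)|\,d\mu(y)>\lambda\,\mu(B(x,\beta r_x)).$$
Since Lemma \ref{lem-cover} demands uniformly bounded radii, I would truncate: for $N\in\nn$, let $E_\lambda^N\subset E_\lambda$ be the subset on which such an $r_x$ may be chosen with $r_x\le N$. The family $\cb:=\{B(x,r_x)\}_{x\in E_\lambda^N}$ then has $\sup r_x\le N<\fz$, so Lemma \ref{lem-cover} furnishes $J_\beta\in\nn$ and sub-families $\cb_1,\ldots,\cb_{J_\beta}$, each consisting of pairwise disjoint balls, such that every ball in $\cb$ is contained in $\beta B'$ for some $B'\in\bigcup_i\cb_i$. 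Hence
$$E_\lambda^N\subset\bigcup_{i=1}^{J_\beta}\bigcup_{B(y,\rho)\in\cb_i}B(y,\beta\rho),$$
and combining the defining inequality with the disjointness within each $\cb_i$ yields
$$\mu(E_\lambda^N)\le\sum_{i=1}^{J_\beta}\sum_{B\in\cb_i}\mu(\beta B)\le\frac{1}{\lambda}\sum_{i=1}^{J_\beta}\sum_{B\in\cb_i}\int_B|f(y)|\,d\mu(y)\le\frac{J_\beta}{\lambda}\|f\|_{L^1(\mscX)}.$$
Since $E_\lambda=\bigcup_{N\in\nn}E_\lambda^N$ and this union is increasing, letting $N\to\fz$ completes part (i).

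For part (ii), the case $p=\fz$ is immediate: the inequality $\beta\ge 1$ forces $B(x,r)\subset B(x,\beta r)$, so $M_0^{(\beta)}f(x)\le\|f\|_{L^\fz(\mscX)}$ pointwise. Interpolating (via the Marcinkiewicz theorem) between this trivial endpoint and the weak-type $(1,1)$ bound from part (i) yields the strong $L^p$ bound for all $p\in(1,\fz)$.

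I expect the only real subtlety to be the truncation step that legitimizes the application of Lemma \ref{lem-cover}, namely verifying the measurability of $E_\lambda^N$ and the monotone convergence $\mu(E_\lambda^N)\nearrow\mu(E_\lambda)$ as $N\to\fz$. Once the covering step is in place, the argument follows the classical template without further analytic input beyond Lemma \ref{lem-cover}.
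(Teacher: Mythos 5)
Your proof is correct and essentially identical to the paper's: the paper likewise truncates the radii (at $2^k$ rather than $N$), applies Lemma \ref{lem-cover} to obtain $J_\beta$ disjoint sub-families whose $\beta$-dilates cover the chosen balls, sums the defining inequality over each disjoint family, passes to the increasing limit, and deduces (ii) from the trivial $L^\fz$ bound via Marcinkiewicz interpolation. The measurability subtlety you flag is handled exactly as in the paper: one bounds $\mu(E_\lambda^N)$ by the measure of the open set $\bigcup_{x\in E_\lambda^N}B(x,r_x)$, and these open sets increase with $N$, so monotone continuity of $\mu$ applies without needing measurability of $E_\lambda^N$ itself.
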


\begin{proof}
The boundedness of $M_0^{(\beta)}$ on $L^\fz(\mscX)$ is obvious.
Next we only prove (i), since (ii)
can be deduced from (i) and the $L^\fz(\mscX)$-boundedness of $M_0^{\bz}$ via
interpolation.

Let
\begin{equation*}
E_\lambda:=\lf\{x\in \mscX:\ M_0^{(\beta)}f(x)>\lambda\r\}.
\end{equation*}
Then, by the definition of $E_\lambda$, for any $x\in E$,
there exists a ball $B_x$ such that
\begin{equation}
\frac1{\mu(\beta B_x)}\int_{B_x}|f(y)|\,d\mu(y)>\lambda.
\end{equation}
For all $k\in \zz_+$,  let $\cb^{(k)}$
be the collection of all
balls $B_x$ for $x\in E$, whose radius $r(B_x)\in(0,2^k]$, and
$E_\lambda^{(k)}:=\{x\in E_\lambda:\ B_x\in \cb^{(k)}\}$.
Then,  $E_\lambda=\cup_{k\in\zz_+}E_\lambda^{(k)}$ and
$E_\lambda^{(k)}\subset E_\lambda^{(k+1)}$ for any $k\in\zz_+$.

For each $k\in\zz_+$, by Lemma \ref{lem-cover}, we can find $J_\beta\in\nn$, independent of $k$, and
sub-families $\cb^{(k)}_i\subset\cb^{(k)},\ i\in\{1,\ldots,J_\beta\}$ such that
$$ \bigcup_{B\in \cb^{(k)}} B \subset \bigcup_{i=1}^{J_\beta}
\bigcup_{B\in\cb^{(k)}_i}\beta B,$$
where $\beta B$ denotes the ball with the same center as $B$
but $\beta$ times the radius of $B$. Thus, by the fact that
$E_\lambda^{(k)}$ increasingly converges to $E_\lambda$ as  $k\to \fz$,
and the disjointness of balls in $\cb_i^{(k)}$ over $i$,
we see that
\begin{eqnarray*}
\mu(E_\lz)&&=\lim_{k\to\fz}\mu\lf(E_\lz^{(k)}\r)\le \lim_{k\to\fz}
\mu\lf(\bigcup_{B\in \cb^{(k)}} B\r)\le \lim_{k\to\fz}\mu\lf(\bigcup_{i=1}^{L_\beta}
\bigcup_{B\in\cb^{(k)}_i}\beta B\r)\\
&&\le \lim_{k\to\fz}\sum_{i=1}^{L_\beta}\sum_{B\in\cb_i^{(k)}}
\mu(\beta B)
\le\lim_{k\to\fz}\sum_{i=1}^{L_\beta}\sum_{B\in\cb_i^{(k)}}
\frac1\lambda\int_B|f(y)|\,d\mu(y)
\ls\frac{L_\beta}\lambda
\|f\|_{L^1(\mscX)}.
\end{eqnarray*}
This finishes the proof of Theorem \ref{thm-cmk-b>1}.
\end{proof}

\begin{rem}\label{rem-mf}
(i) It is worth pointing out that Theorem \ref{thm-cmk-b>1} also holds true
for the non-centered maximal operator, whose proof is similar, the details
being omitted.

(ii) We should point out that Lemma \ref{lem-cover} and Theorem
\ref{thm-cmk-b>1} are generously provided to us
by Professor \textbf{Yoshihiro Sawano} from Tokyo Metropolitan University
of Japan.

(iii) Lemma \ref{lem-cover} and Theorem
\ref{thm-cmk-b>1} in the case $\bz=1$ are still unknown.
\end{rem}

Then we have the following conclusion, which generalizes \cite[Theorem 2.3]{st05},
wherein the corresponding result on the non-doubling measure satisfying the
polynomial growth condition on $\rn$ was obtained. The proof of Theorem
\ref{t6.5} is similar to that of \cite[Theorem 2.3]{st05},
and one key tool used in the proof is the
$L^p(\mu)$-boundedness in  Theorem \ref{thm-cmk-b>1}.
For the sake of convenience, we give the details.

\begin{thm}\label{t6.5}
Let $\mscX$ be a geometrically doubling metric measure space,
$1<p\le q<\fz$, $\beta\in(1,\fz)$ and $k\in(1,\fz)$.
Then, there exists a positive constant $C$ such that, for all $f\in\cm_{p}^{q,(k)}(\mscX)$,
\begin{equation}\label{eq-6.1}
\lf\|M^{(\beta)}_0f\r\|_{\cm_p^{q,(k)}(\mscX)}\le C\|f\|_{\cm_p^{q,(k)}(\mscX)}.
\end{equation}
\end{thm}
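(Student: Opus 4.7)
The plan is to adapt the Sawano--Tanaka decomposition argument from \cite{st05} to the present geometrically doubling setting: split the input $f$ into a local and a non-local piece relative to each fixed ball, control the local piece by the $L^p(\mscX)$-boundedness in Theorem \ref{thm-cmk-b>1}(ii), and control the non-local piece by a pointwise estimate obtained via H\"older's inequality and the Morrey-norm inequality. Throughout, the parameter equivalence in Proposition \ref{equiv} lets us replace $k$ by any other value in $(1,\fz)$ at the price of equivalent norms, so we may first assume $k>(\beta+1)/(\beta-1)$; for such $k$ the range $[1+(k+1)/\beta,\,k)$ for a splitting parameter $\eta$ is non-empty, and we fix any such $\eta$. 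Given an arbitrary ball $B_0:=B(x_0,r_0)$, write $f=f_1+f_2$ with $f_1:=f\chi_{\eta B_0}$ and $f_2:=f-f_1$.

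For the local part, Theorem \ref{thm-cmk-b>1}(ii) gives $\|M_0^{(\beta)}f_1\|_{L^p(\mscX)}\ls\|f\chi_{\eta B_0}\|_{L^p(\mscX)}$. Applying Proposition \ref{prop cover} to the ball $\eta B_0$ with $n_1:=k$ and $n_2:=k/\eta$ (both exceeding $1$ by the choice of $\eta$), we obtain $\wz N$ balls $\{B_i\}_{i=1}^{\wz N}$ covering $\eta B_0$ and satisfying $kB_i\subset kB_0$ for every $i$. Combining these with the Morrey-norm inequality $[\int_{B_i}|f|^p\,d\mu]^{1/p}\le[\mu(kB_i)]^{1/p-1/q}\|f\|_{\cm_p^{q,(k)}(\mscX)}$ and the monotonicity of $\mu$, we would conclude
\begin{equation*}
\lf[\int_{\eta B_0}|f(y)|^p\,d\mu(y)\r]^{1/p}\ls[\mu(kB_0)]^{1/p-1/q}\|f\|_{\cm_p^{q,(k)}(\mscX)},
\end{equation*}
which after multiplication by $[\mu(kB_0)]^{1/q-1/p}$ settles the $f_1$ contribution.

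For the non-local part, the target is the pointwise estimate $M_0^{(\beta)}f_2(y)\ls[\mu(kB_0)]^{-1/q}\|f\|_{\cm_p^{q,(k)}(\mscX)}$ for every $y\in B_0$. Given any $B'=B(y,r)$ intersecting $\mscX\setminus\eta B_0$ (otherwise $f_2$ vanishes on $B'$), the triangle inequality forces $r>(\eta-1)r_0$, whence $\beta r\ge(k+1)r_0$ and thus $\beta B'\supset kB_0$, so $\mu(\beta B')\ge\mu(kB_0)$. Choosing any auxiliary parameter $k_0\in(1,\beta]$, which ensures $k_0B'\subset\beta B'$, and applying H\"older's inequality, the Morrey-norm inequality with parameter $k_0$, and Proposition \ref{equiv}, we would obtain
\begin{equation*}
\frac{1}{\mu(\beta B')}\int_{B'}|f(z)|\,d\mu(z)\le\frac{[\mu(B')]^{1-1/p}[\mu(k_0B')]^{1/p-1/q}}{\mu(\beta B')}\|f\|_{\cm_p^{q,(k_0)}(\mscX)}\ls[\mu(\beta B')]^{-1/q}\|f\|_{\cm_p^{q,(k)}(\mscX)},
\end{equation*}
and taking the supremum over admissible $B'$ together with $\mu(\beta B')\ge\mu(kB_0)$ gives the pointwise claim. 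Integrating on $B_0$, using $\mu(B_0)\le\mu(kB_0)$, and multiplying by $[\mu(kB_0)]^{1/q-1/p}$ then closes the $f_2$ contribution.

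The main obstacle is coordinating the several dilation parameters $\beta$, $k$, $\eta$ and $k_0$ in the absence of any doubling hypothesis on $\mu$: one cannot simply replace $\mu(\eta B_0)$ or $\mu(k_0B')$ by constant multiples of $\mu(B_0)$ or $\mu(B')$, so both Proposition \ref{prop cover} (to cover $\eta B_0$ by balls whose $k$-dilates still sit inside $kB_0$) and Proposition \ref{equiv} (to freely switch parameters in the source Morrey norm) are indispensable. The constraints $\eta<k$, $\eta\ge 1+(k+1)/\beta$, and $k_0\le\beta$ are mutually incompatible for arbitrary $k,\beta\in(1,\fz)$, and only become jointly satisfiable after passing via Proposition \ref{equiv} to a sufficiently large target parameter $k$; this is the only non-routine ingredient in the argument.
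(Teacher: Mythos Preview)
Your proof is correct and follows the same Sawano--Tanaka strategy as the paper: split $f$ into a local part on a dilated ball and a far-away part, control the local part by Theorem~\ref{thm-cmk-b>1}(ii), and control the far-away part by a pointwise H\"older estimate. The only difference is in the parameter bookkeeping you flagged as ``the only non-routine ingredient.'' The paper sidesteps both your covering step via Proposition~\ref{prop cover} and the auxiliary $k_0$ by a neater choice: it fixes $k:=\frac{2\beta}{\beta+1}$ and the splitting radius $\wz\beta:=\frac{\beta+7}{\beta-1}$, then computes the \emph{output} norm with parameter $k\wz\beta$ rather than $k$. With this choice, for $f_1$ one has directly $[\mu(k\wz\beta B_0)]^{1/q-1/p}\bigl[\int_{\wz\beta B_0}|f|^p\,d\mu\bigr]^{1/p}\le\|f\|_{\cm_p^{q,(k)}(\mscX)}$ by definition (take the test ball $\wz\beta B_0$), while for $f_2$ the relation $\beta=\frac{\beta+1}{2}\cdot k$ means that after noting $B_0\subset\frac{\beta+1}{2}B$ one can write $\mu(\beta B)=\mu(k\cdot\frac{\beta+1}{2}B)$ and apply the $k$-Morrey norm without any auxiliary parameter. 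A single application of Proposition~\ref{equiv} at the very end converts $\cm_p^{q,(k\wz\beta)}$ back to $\cm_p^{q,(k)}$. Your version works fine; the paper's just packages the same idea with fewer moving parts.
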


\begin{proof}
By Proposition \ref{equiv}, it suffices to consider the case
that $k:=\frac{2\beta}{\beta+1}>1$.
Let  $f\in\cm_p^{q,(k)}(\mscX)$ and $B_0\subset\mscX$ be a ball.
Define $\wz\beta:=\frac{\beta+7}{\beta-1}>1$, $f_1:=f\chi_{\wz\beta B_0}$ and $f_2:=f-f_1$.
Then, by Definition \ref{def frac-morrey}, together with
$1<p\le q< \fz$, we have
\begin{eqnarray*}
&&\frac1{[\mu(k\wz\beta B_0)]^{1/p-1/q}}
\lf\{\int_{B_0}\lf[M^{(\beta)}_0f_1(y)\r]^p\,d\mu(y)\r\}^{1/p}\\
&&\hs\le\frac1{[\mu(k\wz\beta B_0)]^{1/p-1/q}}
\lf\{\int_{\mscX}\lf[M^{(\beta)}_0f_1(y)\r]^p\,d\mu(y)\r\}^{1/p}\\
&&\hs\ls\frac1{[\mu(k\wz\beta B_0)]^{1/p-1/q}}
\lf\{\int_{\wz\beta B_0}|f(y)|^p\,d\mu(y)\r\}^{1/p}\ls\|f\|_{\cm_p^{q,(k)}(\mscX)},
\end{eqnarray*}
where we used the fact that $M^{(\beta)}_0$ is bounded on
$L^p(\mu)$, for $p\in(1,\fz)$ (see Theorem \ref{thm-cmk-b>1}).

To estimate $f_2$, observe that, if $B\subset\mscX$ is a ball satisfying that
$B\cap B_0\ne\emptyset$ and $B\cap(\mscX\setminus\wz\beta B_0)\ne\emptyset$, then
the radius $r_B>\frac{\wz\beta-1}2r_{B_0}=\frac{4}{\beta-1}r_{B_0}$,
where $r_B$ and $r_{B_0}$ denote, respectively, the radii of $B$ and $B_0$, and hence
$B_0\subset \frac{\beta+1}2B$.
Therefore, we see that, for any $x\in B_0$,
\begin{eqnarray*}
M^{(\beta)}_0f_2(x)&&\le\sup_{x\in B}\frac1{\mu(\beta B)}\int_B|f_2(y)|\,d\mu(y)
\le\sup_{B_0\subset \frac{\beta+1}2 B}\frac1{\mu(\beta B)}\int_B|f(y)|\,d\mu(y)\\
&&\le \sup_{B_0\subset B}\frac1{\mu(\frac{2\beta}{\beta+1} B)}\int_B|f(y)|\,d\mu(y)=
\sup_{B_0\subset B}\frac1{\mu(k B)}\int_B|f(y)|\,d\mu(y),
\end{eqnarray*}
and hence, by this, the H\"older inequality and Definition
\ref{def frac-morrey}, we further see that
\begin{eqnarray*}
&&\frac1{[\mu(k\wz\beta B_0)]^{1/p-1/q}}
\lf[\int_{B_0}\lf[M^{(\beta)}_0f_2(y)\r]^p\,d\mu(y)\r]^{1/p}\\
&&\hs\le\frac{[\mu(B_0)]^{1/p}}{[\mu(k\wz\beta B_0)]^{1/p-1/q}}\sup_{B_0\subset B}
\frac1{\mu(k B)}\int_B|f(y)|\,d\mu(y)\\
&&\hs\le\frac{[\mu(B_0)]^{1/p}}{[\mu(k\wz\beta B_0)]^{1/p-1/q}}\sup_{B_0\subset B}
\frac{[\mu(B)]^{1-1/p}}{\mu(kB)}\lf[\int_B|f(y)|^p\,d\mu(y)\r]^{1/p}\\
&&\hs\le\sup_{B_0\subset B}\frac{[\mu(B_0)]^{1/p}}{[\mu(k\wz\beta B_0)]^{1/p-1/q}}
\frac{[\mu(B)]^{1-1/p}}{\mu(kB)}[\mu(k B)]^{1/p-1/q}\|f\|_{\cm_p^{q,(k)}(\mscX)}\\
&&\hs\le\sup_{B_0\subset B}\frac{[\mu(B)]^{1-1/p+1/q}}{[\mu(kB)]^{1-1/p+1/q}}\|f\|_{\cm_p^{q,(k)}(\mscX)}
\le\|f\|_{\cm_p^{q,(k)}(\mscX)},
\end{eqnarray*}
where the last inequality follows from the fact that $B_0\subset B$ and $k,\ \wz\beta>1$.
This estimate for $f_2$, together with the previous estimate for $f_1$ and Proposition \ref{equiv},
further implies that
\begin{equation*}
\lf\|M^{(\beta)}_0f\r\|_{\cm_p^{q,(k)}(\mscX)}\approx
\lf\|M^{(\beta)}_0f\r\|_{\cm_p^{q,(k\wz\beta)}(\mscX)}\ls\|f\|_{\cm_p^{q,(k)}(\mscX)},
\end{equation*}
which completes the proof of Theorem \ref{t6.5}.
\end{proof}

\begin{rem}
It is still unknown whether the conclusions of Theorems \ref{thm-cmk-b>1}
 and \ref{t6.5} hold true or not when $\beta=1$. Indeed, it is known that
 $M^{(1)}_0$ might not be bounded on $L^p(\mscX)$ with $p\in(1,\fz)$
when $\mu$ is not doubling; see, for example, \cite{ntv}.
\end{rem}

Using Theorem \ref{t6.5}, we further have the following boundedness of
the modified fractional maximal operator $M^{(\beta)}_\alpha$
on the modified Morrey space.

\begin{prop}\label{pf}
Let $\mscX$ be a geometrically doubling metric measure space,
$1<p\le q<\fz$, $\beta\in(1,\fz)$, $\alpha\in(0,1/q)$ and $k\in(1,\fz)$.
Then, there exists a positive constant $C$ such that, for all $f\in\cm_{{p}}^{{q},(k)}(\mscX)$,
\begin{equation*}
\lf\|M^{(\beta)}_\alpha f\r\|_{\cm_{\wz p}^{\wz q,(k)}(\mscX)}
\le C\|f\|_{\cm_{p}^{q,(k)}(\mscX)},
\end{equation*}
 where $\wz p:=\frac{p}{1-\alpha q}$ and
$\wz q:=\frac{q}{1-\alpha q}$.
\end{prop}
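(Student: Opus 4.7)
The plan is to prove a pointwise Hedberg-type interpolation inequality relating $M_\alpha^{(\beta)}$ to $M_0^{(\beta)}$ and the Morrey norm of $f$, and then invoke Theorem \ref{t6.5}. First I would use Proposition \ref{equiv} to reduce to the case where the constant $k$ in the modified Morrey norm satisfies $k\in(1,\beta]$; since the spaces $\cm_p^{q,(k)}(\mscX)$ and $\cm_{\wz p}^{\wz q,(k)}(\mscX)$ are independent of $k>1$ up to equivalent norms, this reduction costs only multiplicative constants, yet makes the later exponent bookkeeping work out. The target pointwise estimate is
$$
M_\alpha^{(\beta)} f(x)\lesssim \bigl[M_0^{(\beta)} f(x)\bigr]^{1-\alpha q}\,\|f\|_{\cm_p^{q,(k)}(\mscX)}^{\alpha q}
\quad\text{for all }x\in\mscX.
$$

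To prove this pointwise bound, I would fix $x$ and split the supremum in \eqref{eq-def-mfmo} at an auxiliary radius $\delta>0$ to be chosen later. For $r\le\delta$, I factor out $[\mu(B(x,\beta r))]^{\alpha}\le[\mu(B(x,\beta\delta))]^{\alpha}$ and recognize the remaining quotient as bounded by $M_0^{(\beta)} f(x)$. For $r>\delta$, I apply the H\"older inequality with exponent $p$ to the integral over $B(x,r)$ and then use the definition of $\|f\|_{\cm_p^{q,(k)}(\mscX)}$ to bound the resulting $L^p$ mass by $[\mu(B(x,kr))]^{1/p-1/q}\|f\|_{\cm_p^{q,(k)}(\mscX)}$. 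Invoking $1<p\le q$ together with $k\le\beta$ (so $\mu(B(x,kr))\le\mu(B(x,\beta r))$ and $\mu(B(x,r))\le\mu(B(x,\beta r))$), the three $\mu$-factors collapse to $[\mu(B(x,\beta r))]^{\alpha-1/q}$; since $\alpha<1/q$ and $r>\delta$, this is bounded by $[\mu(B(x,\beta\delta))]^{\alpha-1/q}$. Summing the two pieces yields
$$
M_\alpha^{(\beta)} f(x)\lesssim [\mu(B(x,\beta\delta))]^{\alpha}\,M_0^{(\beta)} f(x)+[\mu(B(x,\beta\delta))]^{\alpha-1/q}\,\|f\|_{\cm_p^{q,(k)}(\mscX)}.
$$
Choosing $\delta$ so that $\mu(B(x,\beta\delta))$ is comparable to $[\|f\|_{\cm_p^{q,(k)}(\mscX)}/M_0^{(\beta)} f(x)]^{q}$ balances the two terms and produces the Hedberg-type inequality.

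Once the pointwise estimate is in hand, the conclusion is almost immediate: taking the $\cm_{\wz p}^{\wz q,(k)}(\mscX)$ norm of both sides and using the elementary identity
$$
\bigl\|g^{1-\alpha q}\bigr\|_{\cm_{\wz p}^{\wz q,(k)}(\mscX)}=\|g\|_{\cm_p^{q,(k)}(\mscX)}^{1-\alpha q},
$$
which follows from $\wz p(1-\alpha q)=p$ and $\wz q(1-\alpha q)=q$, one obtains
$$
\bigl\|M_\alpha^{(\beta)} f\bigr\|_{\cm_{\wz p}^{\wz q,(k)}(\mscX)}\lesssim \bigl\|M_0^{(\beta)} f\bigr\|_{\cm_p^{q,(k)}(\mscX)}^{1-\alpha q}\,\|f\|_{\cm_p^{q,(k)}(\mscX)}^{\alpha q}.
$$
An application of Theorem \ref{t6.5} to $M_0^{(\beta)}$, and the trivial recombination of exponents $(1-\alpha q)+\alpha q=1$, then finish the proof.

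The main obstacle I expect is the optimization step for $\delta$: in a general geometrically doubling space, the function $r\mapsto \mu(B(x,\beta r))$ need not be continuous, so one cannot solve for $\delta$ exactly. I would therefore take $\delta$ to be an appropriate infimum and use monotonicity (and, if needed, a minor doubling-type adjustment) to show that $\mu(B(x,\beta\delta))$ is comparable, up to constants, to the desired value. The degenerate cases $M_0^{(\beta)} f(x)\in\{0,\fz\}$ should be handled separately, but both reduce to trivial estimates: if $M_0^{(\beta)} f(x)=0$ then $f\equiv 0$ on every ball centered at $x$ and $M_\alpha^{(\beta)} f(x)=0$, while $M_0^{(\beta)} f(x)=\fz$ contributes to a $\mu$-null set by Theorem \ref{thm-cmk-b>1} and can be discarded.
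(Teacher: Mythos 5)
Your proposal is correct, and it shares the paper's overall architecture: reduce via Proposition \ref{equiv} to a convenient $k$ (the paper takes $k=\bz$ exactly, you take $k\in(1,\bz]$), establish the pointwise Hedberg-type inequality $M^{(\bz)}_\az f(x)\ls \|f\|_{\cm_p^{q,(k)}(\mscX)}^{\az q}\,[M^{(\bz)}_0 f(x)]^{1-\az q}$, use the power identity $\|g^{1-\az q}\|_{\cm_{\wz p}^{\wz q,(k)}(\mscX)}=\|g\|_{\cm_p^{q,(k)}(\mscX)}^{1-\az q}$, and conclude by Theorem \ref{t6.5}. Where you genuinely diverge is in how the pointwise inequality is proved. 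The paper never splits radii: for each single ball it factors $\int_{B(x,r)}|f|\,d\mu=[\int_{B(x,r)}|f|\,d\mu]^{\az q}\,[\int_{B(x,r)}|f|\,d\mu]^{1-\az q}$, applies H\"older only to the first factor, and regroups the measure exponents so that every ball simultaneously satisfies the bound --- no auxiliary $\delta$, no optimization, no continuity issue, no degenerate cases. You instead run the classical two-regime Hedberg split at a threshold $\delta$ and then optimize $\mu(B(x,\bz\delta))$. This works, and the fix you sketch for the possible jumps of $r\mapsto\mu(B(x,\bz r))$ is the right one: setting $t:=[\|f\|_{\cm_p^{q,(k)}(\mscX)}/M_0^{(\bz)}f(x)]^{q}$ and $\delta_*:=\sup\{\delta:\ \mu(B(x,\bz\delta))\le t\}$, left-continuity of $r\mapsto\mu(B(x,r))$ for open balls gives $\mu(B(x,\bz\delta_*))\le t$, while $\mu(B(x,\bz r))>t$ for $r>\delta_*$, so splitting the supremum exactly at $\delta_*$ yields the bound $\|f\|_{\cm_p^{q,(k)}(\mscX)}^{\az q}[M_0^{(\bz)}f(x)]^{1-\az q}$ in both regimes with no comparability constant needed, and the extreme cases $\delta_*\in\{0,\fz\}$ are each covered by a single regime. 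Two minor remarks: your large-radius estimate correctly uses $1/p-1/q\ge0$ and $k\le\bz$, both available after your reduction; but discarding the set where $M_0^{(\bz)}f(x)=\fz$ via Theorem \ref{thm-cmk-b>1} is off, since a Morrey function need not belong to $L^p(\mscX)$ globally --- it is anyway unnecessary, because $1-\az q>0$ makes the pointwise inequality trivially true there (and $\|f\|_{\cm_p^{q,(k)}(\mscX)}=0$ forces $f=0$ a.e.). In short, the paper's one-ball factorization buys the complete avoidance of this case analysis, while your version buys a more transparent view of where the exponents $\az q$ and $1-\az q$ come from.
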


\begin{proof}
By Proposition \ref{equiv},
it suffices to consider the case that $k=\beta\in(1,\fz)$.

For any ball $B(x,r)\subset\mscX$, with $x\in\mscX$ and $r>0$,
and $f\in\cm_{{p}}^{{q},(\beta)}(\mscX)$,
we know, by the H\"older inequality, \eqref{a} and \eqref{def max}, that
\begin{eqnarray*}
&&[\mu(B(x,\beta r))]^{{\alpha}-1}\int_{B(x,r)}|f(y)|\,d\mu(y)\\
&&\hs=[\mu(B(x,\beta r))]^{{\alpha}-1}\lf[\int_{B(x,r)}|f(y)|\,d\mu(y)\r]^{{\alpha q}}
\lf[\int_{B(x,r)}|f(y)|\,d\mu(y)\r]^{1-{\alpha q}}\\
&&\hs\le[\mu(B(x, \beta r))]^{{\alpha}-1+(1-\frac1p){\alpha q}}
\lf[\int_{B(x,r)}|f(y)|^p\,d\mu(y)\r]^{\frac{\alpha q}{p}}
\lf[\int_{B(x,r)}|f(y)|\,d\mu(y)\r]^{1-{\alpha q}}\\
&&\hs=\lf\{\frac{1}{[\mu(B(x, \beta r))]^{1-\frac{p}q}}\int_{B(x,r)}|f(y)|^p
\,d\mu(y)\r\}^{\frac{\az q}{p}}
\lf[\frac1{\mu(B(x,\beta r))}\int_{B(x,r)}|f(y)|\,d\mu(y)\r]^{1-{\alpha q}}\\
&&\hs\le\|f\|_{\cm_{{p}}^{{q},(\beta)}(\mscX)}^{{\az q}}\lf[M_0^{(\beta)}f(x)\r]^{1-{\alpha q}},
\end{eqnarray*}
which, together with  \eqref{eq-def-mfmo}, implies that, for all $x\in\mscX$,
\begin{equation*}
M^{(\beta)}_\az f(x)
\le\|f\|_{\cm_{{p}}^{{q},(\beta)}(\mscX)}^{\alpha q}\lf[M_0^{(\beta)}f(x)\r]^{1-\alpha q}.
\end{equation*}
Then, by Theorem \ref{t6.5}, we see that
\begin{equation*}
\lf\|M^{(\beta)}_\az f\r\|_{\cm_{\wz p}^{\wz q,(\beta)}(\mscX)}
\le\|f\|_{\cm_{{p}}^{{q},(\beta)}(\mscX)}^{{\alpha q}}
\lf\|\lf[M_0^{(\beta)}f\r]^{1-{\alpha q}}\r\|_{\cm_{\wz p}^{\wz q,(\beta)}(\mscX)}
\ls \|f\|_{\cm_{{p}}^{{q},(\beta)}(\mscX)},
\end{equation*}
which completes the proof of Proposition \ref{pf}.
\end{proof}

\subsection{Fractional maximal operators
on $\hmpq$ and $\nmpq$ \label{s6.2}}

\hskip\parindent Recently, Heikkinen et al. \cite{hknt,hlnt} studied
the boundedness of some (fractional) maximal operators on the Newton-Sobolev space
and the Haj\l asz-Sobolev space over metric measure spaces.
In this section, we consider the corresponding problem
for Newton-Morrey-Sobolev spaces and Haj\l asz-Morrey-Sobolev spaces.

Throughout this section, we \emph{always assume that the measure $\mu$ is doubling}.
We call a measure is doubling if there exists a constant $C_0\in[1,\fz)$ such that,
for all $x\in\mscX$ and $r>0$,
\begin{equation}\label{db}
\mu(B(x,2r))\le C_0\mu(B(x,r))\quad \textup{(doubling property)}.
\end{equation}
%(see \eqref{db}).

Recall that it is well known that any space of homogeneous type
is also geometrically doubling (see \cite[pp.\,66-67]{cw71}).
Moreover, since $\mu$ is doubling, we see that, for any $\beta\in (1,\fz)$
and $\az\in[0,1]$, there exists a positive constant $C$, depending on $\bz$ and $\az$, such that,
for all $f\in L^1_\loc(\mscX)$ and $x\in\mscX$,
$M_\alpha f(x)\le C{M}^{(\beta)}_\alpha f (x)$.
From these facts, Theorem \ref{t6.5} and Proposition \ref{pf},
we immediately deduce the following conclusion.

\begin{cor}\label{t6.1}
Let $1<p\leq q<\fz$ and $\alpha\in[0,1/q)$.
Then, there exists a positive constant $C$, depending on $\az$, $p$ and $q$,
such that, for all $f\in\mpq$,
\begin{equation*}
\|{M}_\alpha f\|_{\cm_{\widetilde{p}}^{\widetilde{q}}(\mscX)}\leq C\|f\|_{\mpq},
\end{equation*}
 where $\widetilde{p}:=\frac{p}{1-\alpha q}$ and
$\widetilde{q}:=\frac{q}{1-\alpha q}.$
\end{cor}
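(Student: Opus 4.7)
The plan is to deduce the corollary directly from Theorem~\ref{t6.5} (in the case $\alpha = 0$) and Proposition~\ref{pf} (in the case $\alpha \in (0, 1/q)$), both of which already establish the desired boundedness for the modified operator $M_\alpha^{(\beta)}$ on the modified Morrey space $\cm_p^{q,(k)}(\mscX)$. Three elementary bridging observations, all powered by the doubling hypothesis, are what remain.

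First, I would recall that any doubling metric measure space is automatically geometrically doubling in the sense of Definition~\ref{def gdp} (a classical fact, see, e.g., \cite[pp.\,66--67]{cw71}), so the hypotheses of Theorem~\ref{t6.5} and Proposition~\ref{pf} are in force. Second, fix any $\beta \in (1, \infty)$ and any $k \in (1, \infty)$. The doubling condition \eqref{db} gives $\mu(B(x, \beta r)) \le C \mu(B(x, r))$ for all $x \in \mscX$ and $r > 0$; since $\alpha - 1 < 0$, raising to the power $\alpha - 1$ and multiplying by $\int_{B(x,r)} |f(y)|\,d\mu(y)$, then taking the supremum over $r > 0$ in \eqref{eq-def-mfmo}, produces the pointwise bound
\begin{equation*}
M_\alpha f(x) \ls M_\alpha^{(\beta)} f(x), \qquad x \in \mscX.
\end{equation*}
Third, the doubling property also yields $\mu(B(x, kr)) \approx \mu(B(x, r))$, so the norms $\|\cdot\|_{\cm_p^{q,(k)}(\mscX)}$ and $\|\cdot\|_{\mpq}$ are equivalent, with constants depending only on $k$, $p$, $q$, and the doubling constant.

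Combining these three ingredients with Proposition~\ref{pf} applied to $M_\alpha^{(\beta)}$ yields, when $\alpha \in (0, 1/q)$,
\begin{equation*}
\|M_\alpha f\|_{\cm_{\wz p}^{\wz q}(\mscX)} \ls \|M_\alpha^{(\beta)} f\|_{\cm_{\wz p}^{\wz q, (k)}(\mscX)} \ls \|f\|_{\cm_p^{q, (k)}(\mscX)} \ls \|f\|_{\mpq};
\end{equation*}
the case $\alpha = 0$ is handled identically, with $\wz p = p$, $\wz q = q$, and Theorem~\ref{t6.5} replacing Proposition~\ref{pf}. I do not anticipate any genuine obstacle here: the heavy lifting has already been carried out in Section~\ref{s6.1}, and each of the three steps above is a routine consequence of doubling. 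If anything warrants a little care it is just verifying that the constants in the norm equivalence of Step~3 are uniform in the ball, which they are.
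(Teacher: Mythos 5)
Your proposal is correct and follows essentially the same route as the paper: the authors likewise note that doubling implies geometric doubling (citing Coifman--Weiss), derive the pointwise bound $M_\alpha f\le C M_\alpha^{(\beta)}f$ from the doubling property, and then invoke Theorem \ref{t6.5} and Proposition \ref{pf}, with the equivalence $\cm_p^{q,(k)}(\mscX)=\cm_p^q(\mscX)$ under doubling left implicit where you spell it out. Your explicit treatment of the $\alpha=0$ versus $\alpha\in(0,1/q)$ case split and of the norm-equivalence constants is a faithful (and slightly more detailed) rendering of the paper's two-line deduction.
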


As an application of Corollary \ref{t6.1},
we obtain the following boundedness of (fractional) maximal operators
$\wz M_\alpha$ on modified Morrey spaces.

Recall that a measure $\mu$ is said to satisfy the
\emph{measure lower bound condition}, if
there exists a positive constant $C$ such that, for any $x\in\mscX$ and $r\in(0,\fz)$,
\begin{equation}\label{Q-regular}
\mu(B(x,r))\ge Cr^Q
\end{equation}
for some $Q\in(0,\fz)$.

Recently, if $\mu$ satisfies \eqref{Q-regular}, Heikkinen et al. \cite{hlnt}
established the boundedness from
$L^p(\mscX)$ to $L^s(\mscX)$ for $p\in (1, Q)$ and $s:=\frac{Qp}{Q-\alpha p}$
of the following modified (fractional) maximal function $\wz M_\alpha$,
defined by setting, for any $\alpha\in[0,1]$,
$f\in L_{\loc}^1(\mscX)$ and $x\in\mscX$,
\begin{equation}
\wz M_\alpha f(x):=\sup_{r>0}\frac{r^\alpha}{\mu(B(x,r))}\int_{B(x,r)}|f(y)|\,d\mu(y).
\end{equation}
It is easy to see that, in the present setting,
there exists a positive constant $C$, depending
on $\az$ and $Q$, such that, for all $f\in L_\loc^1(\mscX)$
and $x\in\mscX$,
$\wz M_\alpha f(x)\le C M_\az f(x)$,
which, together with Corollary \ref{t6.1}, implies
the following conclusion.

\begin{cor}\label{c6.9}
Let $1<p\leq q<\fz$ and $\alpha\in[0,1/q)$. Assume that $\mu$ satisfies \eqref{Q-regular}.
Then, there exists a positive constant $C$, depending on $\az$, $p$ and $q$,
such that, for all $f\in\mpq$,
\begin{equation}\label{e6.2}
\lf\|{\wz M}_\alpha f\r\|_{\cm_{\widetilde{p}}^{\widetilde{q}}(\mscX)}\leq C\|f\|_{\mpq},
\end{equation}
 where $\widetilde{p}:=\frac{p}{1-\alpha q}$ and
$\widetilde{q}:=\frac{q}{1-\alpha q}.$
\end{cor}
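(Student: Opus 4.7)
The plan is to reduce Corollary \ref{c6.9} directly to the already established Corollary \ref{t6.1} via a pointwise comparison between $\wz M_\alpha$ and $M_\alpha$. The key observation, as hinted at by the authors in the paragraph preceding the statement, is that under the measure lower bound condition \eqref{Q-regular}, the factor $r^\alpha$ appearing in the definition of $\wz M_\alpha$ can be controlled by a power of $\mu(B(x,r))$.

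First I would establish the pointwise inequality $\wz M_\alpha f(x)\le C M_\alpha f(x)$ for every $x\in\mscX$ and $f\in L^1_{\loc}(\mscX)$, with $C$ depending only on $\alpha$ and $Q$. The lower bound $\mu(B(x,r))\ge C_1 r^Q$ yields $r\le C_1^{-1/Q}[\mu(B(x,r))]^{1/Q}$, hence $r^\alpha\le C_2[\mu(B(x,r))]^{\alpha/Q}$. Substituting into
\begin{equation*}
\wz M_\alpha f(x)=\sup_{r>0}\frac{r^\alpha}{\mu(B(x,r))}\int_{B(x,r)}|f(y)|\,d\mu(y)
\end{equation*}
and comparing with the definition of $M_\alpha f(x)=\sup_{r>0}[\mu(B(x,r))]^{\alpha-1}\int_{B(x,r)}|f(y)|\,d\mu(y)$ produces the required pointwise domination, with the constant tracked through the $Q$-dependence.

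Second, I would take the $\cm_{\wz p}^{\wz q}(\mscX)$-norm of both sides of this pointwise inequality and invoke Corollary \ref{t6.1}, which is applicable since $\alpha\in[0,1/q)$ and $\mu$ is doubling, to obtain
\begin{equation*}
\lf\|\wz M_\alpha f\r\|_{\cm_{\wz p}^{\wz q}(\mscX)}\le C\lf\|M_\alpha f\r\|_{\cm_{\wz p}^{\wz q}(\mscX)}\le C'\|f\|_{\mpq},
\end{equation*}
which is precisely the desired estimate \eqref{e6.2}, with $\wz p=p/(1-\alpha q)$ and $\wz q=q/(1-\alpha q)$.

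The argument is essentially a one-step reduction: all the analytic content lives in Corollary \ref{t6.1} (which in turn rests on Theorem \ref{t6.5} and Proposition \ref{pf}), and the only new input is the measure lower bound used to compare the two maximal operators. Accordingly, the sole potential obstacle is the pointwise bound $\wz M_\alpha f\le C M_\alpha f$, and even there the work is limited to keeping track of how the constant depends on $\alpha$ and $Q$ when converting $r^\alpha$ into an expression involving $[\mu(B(x,r))]^\alpha$ via \eqref{Q-regular}.
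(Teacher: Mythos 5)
Your route is the same one the paper takes (pointwise domination of $\wz M_\alpha$ by a fractional maximal operator, then Corollary \ref{t6.1}), but the decisive step is a non sequitur as you have written it. From \eqref{Q-regular} you correctly obtain $r^\alpha\le C[\mu(B(x,r))]^{\alpha/Q}$, and substituting this gives
\begin{equation*}
\frac{r^\alpha}{\mu(B(x,r))}\int_{B(x,r)}|f(y)|\,d\mu(y)
\le C\,[\mu(B(x,r))]^{\alpha/Q-1}\int_{B(x,r)}|f(y)|\,d\mu(y),
\end{equation*}
which is the quantity defining $M_{\alpha/Q}f(x)$, \emph{not} $M_\alpha f(x)$. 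Passing from the exponent $\alpha/Q-1$ to $\alpha-1$ would require $[\mu(B)]^{\alpha/Q}\ls[\mu(B)]^{\alpha}$ uniformly over all balls, which fails unless $Q=1$ or $\alpha=0$: for $Q>1$ it breaks down as $\mu(B)\to0$, for $Q<1$ as $\mu(B)\to\fz$. The asserted pointwise bound $\wz M_\alpha f\le CM_\alpha f$ is in fact false in general: on $\rn$ with Lebesgue measure (so $Q=n\ge2$), taking $f=\chi_{B(0,\delta)}$ gives $\wz M_\alpha f(0)\approx\delta^{\alpha}$ while $M_\alpha f(0)\approx\delta^{n\alpha}$, and the ratio blows up as $\delta\to0$ whenever $\alpha>0$. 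A scaling argument on $\rn$ shows moreover that the \emph{statement} with target exponents $\wz q=q/(1-\alpha q)$ cannot hold for $n\ge2$ and $\alpha>0$: dilation invariance forces $1/\wz q=1/q-\alpha/Q$, i.e., $\wz q=Qq/(Q-\alpha q)$.

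You should know that the paper's own one-sentence justification asserts exactly the same pointwise domination $\wz M_\alpha f\le CM_\alpha f$ and is open to the same objection; your computation, pushed to its honest conclusion, repairs it. Indeed, what you actually proved is $\wz M_\alpha f\le CM_{\alpha/Q}f$ pointwise, and applying Corollary \ref{t6.1} with $\alpha/Q$ in place of $\alpha$ (admissible precisely when $\alpha\in[0,Q/q)$) yields boundedness of $\wz M_\alpha$ from $\mpq$ to $\cm_{Qp/(Q-\alpha q)}^{Qq/(Q-\alpha q)}(\mscX)$. These are exactly the exponents $p^\ast=\frac{Qp}{Q-\alpha q}$ and $q^\ast=\frac{Qq}{Q-\alpha q}$ with which the corollary is actually invoked later, in Theorems \ref{tf} and \ref{thm-frac-max-bd1}, and they agree with the printed $\wz p=p/(1-\alpha q)$, $\wz q=q/(1-\alpha q)$ only when $Q=1$. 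So: as literally stated, neither your argument nor the paper's establishes the corollary except when $Q=1$ or $\alpha=0$; with the range $\alpha\in[0,Q/q)$ and the $Q$-corrected exponents, your reduction via $M_{\alpha/Q}$ is complete and is the intended proof.
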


Recall that $\mscX$
is said to satisfy the \emph{relative $1$-annular decay property}, if there 	
exists a positive constant $C$ such that, for all $x\in\mscX$, $R\in(0,\fz)$ and
$h\in(0,R)$,
\begin{equation}\label{d-adp}
\mu\lf(B\cap \lf[B(x,R)\setminus B(x, R-h)\r]\r)\le C \frac h{r_B} \mu(B)
\end{equation}
for all balls $B$ with radius $r_B<3R$; see, for example, \cite[(2.5)]{hlnt}.

Now we turn to the boundedness of the (fractional) maximal operator ${\wz M}_\alpha$ on
Haj\l asz-Morrey-Sobolev spaces.

\begin{thm}\label{tf}
Assume that $\mu$ satisfies \eqref{Q-regular} and $\mscX$
has the relative 1-annular decay property \eqref{d-adp}. Let $1<p\le q<\fz$
and $\alpha\in[0,Q/q)$. Then, for any $f\in\hmpq$, $\wz M_\alpha f\in\hmpsqs$,
 where $p^\ast:=\frac{Qp}{Q-\alpha q}$ and
$q^\ast:=\frac{Qq}{Q-\alpha q}$. Moreover, there exists a positive constant $C$,
depending only on the doubling constant, $Q$, $p$,
$q$ and $\alpha$, such that, for all $f\in\hmpq$,
\begin{equation*}
\lf\|\wz M_\alpha f\r\|_{\hmpsqs}\leq C\|f\|_{\hmpq}.
\end{equation*}
\end{thm}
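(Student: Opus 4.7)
The plan is to decompose the verification of $\wz M_\alpha f\in\hmpsqs$ into two tasks: (a) showing $\wz M_\alpha f\in\mpsqs$, and (b) producing a Haj\l asz gradient of $\wz M_\alpha f$ that already lies in $\mpsqs$. Task (a) will be handled by a Hedberg-type argument riding on the Morrey boundedness of the Hardy--Littlewood maximal operator, while task (b), which carries the main geometric content, will transfer a Haj\l asz gradient from $f$ to $\wz M_\alpha f$ by applying $\wz M_\alpha$ to the gradient itself.

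For task (a), I would use the measure lower bound \eqref{Q-regular} to pass from the ratio $r^\alpha/\mu(B(x,r))$ to $[\mu(B(x,r))]^{\alpha/Q-1}$, obtaining the pointwise Hedberg-type bound
\begin{equation*}
\wz M_\alpha f(x)\lesssim\|f\|_{\mpq}^{\alpha q/Q}[Mf(x)]^{1-\alpha q/Q}\quad\text{for all }x\in\mscX,
\end{equation*}
by the same split used in the proof of Proposition \ref{lem-bd-of-int-op-morrey}. Raising to the power $p^\ast$ and noting that $p^\ast(1-\alpha q/Q)=p$ and $q^\ast(1-\alpha q/Q)=q$, the Morrey boundedness of $M$ (Lemma \ref{lem_bd of hl in morrey}) then yields
\begin{equation*}
\|\wz M_\alpha f\|_{\mpsqs}\lesssim\|f\|_{\mpq}^{\alpha q/Q}\|Mf\|_{\mpq}^{1-\alpha q/Q}\lesssim\|f\|_{\mpq}.
\end{equation*}

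For task (b), let $h\in\mpq$ be any Haj\l asz gradient of $f$. I would establish the pointwise inequality
\begin{equation*}
|\wz M_\alpha f(x)-\wz M_\alpha f(y)|\lesssim d(x,y)\lf[\wz M_\alpha h(x)+\wz M_\alpha h(y)\r]
\end{equation*}
off a set of $\mu$-measure zero, so that a uniform constant multiple of $\wz M_\alpha h$ is a Haj\l asz gradient of $\wz M_\alpha f$. To do so, I would fix Lebesgue points $x,y$ of $f$, take a scale $r$ near-optimizing $\wz M_\alpha f(x)$, and compare the average of $|f|$ over $B(x,r)$ with that over $B(y,r+d(x,y))$ (respectively $B(y,r)$ in the regime $r>d(x,y)$). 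The Haj\l asz inequality $|f(u)-f(v)|\le d(u,v)[h(u)+h(v)]$ converts the $f$-differences into integrals of $h$ scaled by $d(x,y)$, while the relative $1$-annular decay property \eqref{d-adp} combined with \eqref{Q-regular} and the doubling condition of $\mu$ absorbs the discrepancy in the volume normalizations; the cases $r\le d(x,y)$ and $r>d(x,y)$ are handled separately.

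The main obstacle will be precisely this pointwise gradient-transfer estimate: the delicate point is to balance the $r^\alpha$ weight inside $\wz M_\alpha$ against the $d(x,y)/r$ loss produced by the annular decay, which is where both hypotheses \eqref{Q-regular} and \eqref{d-adp} enter in a genuinely coupled way. Once this is done, task (a) applied to $h$ in place of $f$ gives $\wz M_\alpha h\in\mpsqs$ with $\|\wz M_\alpha h\|_{\mpsqs}\lesssim\|h\|_{\mpq}$, and hence
\begin{equation*}
\|\wz M_\alpha f\|_{\hmpsqs}\lesssim\|\wz M_\alpha f\|_{\mpsqs}+\|\wz M_\alpha h\|_{\mpsqs}\lesssim\|f\|_{\mpq}+\|h\|_{\mpq};
\end{equation*}
taking the infimum over all Haj\l asz gradients $h$ of $f$ then yields the desired bound by $\|f\|_{\hmpq}$.
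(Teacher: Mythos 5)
Your overall architecture coincides with the paper's: your task (a) is exactly the paper's route (the Hedberg-type pointwise bound behind Proposition \ref{pf} and Corollary \ref{c6.9}, plus Lemma \ref{lem_bd of hl in morrey}), and your exponent bookkeeping $p^\ast(1-\alpha q/Q)=p$, $q^\ast(1-\alpha q/Q)=q$ is correct. The genuine gap is in task (b), which you yourself flag as the main obstacle but never actually close, and the specific inequality you propose there is stronger than what your sketched method can deliver. When you compare $\frac{r^\alpha}{\mu(B(x,r))}\int_{B(x,r)}|f|\,d\mu$ with the corresponding average centered at $y$, the relative $1$-annular decay property \eqref{d-adp} controls only the \emph{measure} of the symmetric difference region $A$ (it lies in an annulus of width about $d(x,y)$, so $\mu(A)\lesssim \frac{d(x,y)}{r}\mu(B)$); it gives no bound of $\int_A|f|\,d\mu$ by $\frac{d(x,y)}{r}\int_B|f|\,d\mu$, since $|f|$ may concentrate on the thin annulus. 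The standard way out --- and what the paper does, following \cite[Theorem 4.5]{hlnt} --- is to apply the H\"older inequality with an exponent $r\in(1,p)$ to trade the small measure $\mu(A)$ for $r$-th power averages; this inevitably produces the larger gradient $\wz g:=\lf[\wz M_{\alpha r}(h^r)\r]^{1/r}$ (note $\wz M_\alpha h\le \wz g$ by Jensen's inequality), not $\wz M_\alpha h$ itself. Your claimed pointwise bound $|\wz M_\alpha f(x)-\wz M_\alpha f(y)|\lesssim d(x,y)[\wz M_\alpha h(x)+\wz M_\alpha h(y)]$ is not established by your sketch and is not what the known argument yields in this generality (the Euclidean analogue of such a clean transfer relies on translation invariance, which is unavailable here).

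The repair is, however, cosmetic for the remainder of your argument. Taking $\wz g=[\wz M_{\alpha r}(h^r)]^{1/r}$ with $r\in(1,p)$ as the Haj\l asz gradient of $\wz M_\alpha f=\wz M_\alpha(|f|)$ (observe that $h$ is also a Haj\l asz gradient of $|f|$), your task (a) applied with exponents $p/r>1$ and $q/r$ (and $\alpha r$ in place of $\alpha$, which is admissible since $\alpha<Q/q$) gives exactly
\begin{equation*}
\|\wz g\|_{\mpsqs}=\lf\|\wz M_{\alpha r}(h^r)\r\|_{\cm_{p^\ast/r}^{q^\ast/r}(\mscX)}^{1/r}
\lesssim\lf\|h^r\r\|_{\cm_{p/r}^{q/r}(\mscX)}^{1/r}=\|h\|_{\mpq},
\end{equation*}
and your concluding infimum argument is unchanged; this is precisely the paper's proof, including its requirement $r<p$ so that the Morrey maximal bound applies at exponent $p/r>1$.
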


\begin{proof}
The proof is similar to that of \cite[Theorem 4.5]{hlnt}.
We present some details.
Let $f\in\hmpq$ and $g\in\mpq$ be a Haj\l asz gradient of $f$ such that
$\|g\|_{\mpq}\ls \|f\|_{\hmpq}$.
It is easy to see that $g$ is also a Haj\l asz gradient of $|f|$.
Let $r\in(1,p)$ and define
\begin{equation*}
\wz g:=\lf[\wz M_{\alpha r}(g^r)\r]^{1/r}.
\end{equation*}
By an argument similar to that used in the proof of \cite[Theorem 4.5]{hlnt}, we know
that $\wz g$ is a Haj\l asz gradient of $\wz M_\alpha (|f|)$, as well as $\wz M_\alpha f$, since
$\wz M_\alpha (|f|)=\wz M_\alpha f$.
Moreover, by $p/r>1$, \eqref{a} and \eqref{e6.2}, we see that
\begin{equation}\label{6.11}
\|\wz g\|_{\mpsqs}=\lf\|\wz M_{\alpha r}(g^r)\r\|_{\cm_{p^\ast/r}^{q^\ast/r}(\mscX)}^{1/r}
\lesssim\|g^r\|_{\cm_{p/r}^{q/r}(\mscX)}^{1/r}\approx\|g\|_{\mpq}.
\end{equation}
Combining \eqref{6.11} and Definition \ref{def hmpu},
we obtain the desired conclusion and then
complete the proof of Theorem \ref{tf}.
\end{proof}

We point out that Theorem \ref{tf} when $p=q$ goes back to
\cite[Theorem 4.5]{hlnt}.

Now we recall the \emph{discrete (fractional) maximal operator} $M_\alpha^\ast$
introduced in \cite[Section 5]{hknt}. Let $\{B(x_i,r)\}_{i\in\nn}$ be a ball covering of $\mscX$
such that $\{B(x_i,r)\}_{i\in\nn}$ are of finite overlap. Since $\mscX$ is doubling,
the overlap number $N$ depends only on the doubling constant and
is independent of $r$. Let $\{\vz_i\}_{i\in\nn}$  be
a partition of unity related to $\{B(x_i,r)\}_{i\in\nn}$ such that $0\le\vz_i\le1$, $\vz_i=0$ on $\mscX\setminus
B(x_i,6r)$, $\vz_i\ge v$ on $B(x_i,3r)$ and $\vz_i$ is Lipschitz function with Lipschitz constant $L/r$,
where $L\in(0,\fz)$ and $v\in(0,1]$ are constants depending only on the doubling constant,
and $\sum_{i\in\nn}\vz_i\equiv1$. The discrete convolution of $u\in L^1_{\rm loc}(\mscX)$
at the scale $3r$ is defined by setting, for all $x\in\mscX$,
$$u_r(x):=\sum_{i\in\nn}\vz_i(x) u_{B(x_i,3r)},$$
where $u_{B(x_i,3r)}$ denotes the integral mean of $u$ on $B(x_i,3r)$ (see \eqref{eq-def-mean}).
Now, let $\{r_j\}_{j\in\nn}$ be a sequence of the positive rational numbers,
and $\{B(x_{i,j},r_j)\}_{i\in\nn}$ for each $j$ is a ball covering of $\mscX$ as above.
Then, the  \emph{discrete (fractional) maximal function} $M_\az^\ast u$ of $u$ is defined
by setting, for all $x\in\mscX$,
$$M_\alpha^\ast u(x):= \sup_{j\in\nn} r_j^\az |u|_{r_j}(x).$$
Similar to the proof of
\cite[Theorem 6.3]{hknt}, we obtain the following result on the boundedness of
$M_\alpha^\ast$ on Newton-Morrey-Sobolev spaces.

\begin{thm}\label{thm-frac-max-bd1}
Let $\mu$ satisfy \eqref{Q-regular}, $1<p\le q<\fz$ and $\alpha\in[0,Q/q)$.
Assume that $\mscX$ is complete and supports a weak $(1,p)$-Poincar\'e inequality.
Then, for any $f\in\nmpq$, it holds true that $M_\alpha^\ast f\in NM_{p^\ast}^{q^\ast}
(\mscX)$  with $p^\ast:=Qp/(Q-\alpha q)$ and $q^\ast:=Qq/(Q-\alpha q)$.
Moreover, there exists a positive constant $C$, independent of $f$, such that
\begin{equation*}
\|M_\alpha^\ast f\|_{NM_{p^\ast}^{q^\ast}(\mscX)}\leq C\|f\|_{\nmpq}.
\end{equation*}
\end{thm}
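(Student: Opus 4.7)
The plan is to reduce to the Haj\l asz setting via Theorem \ref{thm-h=n}, then construct a Haj\l asz gradient of $M_\alpha^\ast f$ from a Haj\l asz gradient of $f$, and finally bound its Morrey norm by means of Corollary \ref{c6.9} applied to an auxiliary fractional maximal operator at a smaller exponent.

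First, since $\mu$ is doubling and $\mscX$ supports a weak $(1,p)$-Poincar\'e inequality, Theorem \ref{thm-h=n} gives $\nmpq=\hmpq$ with equivalent norms. Pick a Haj\l asz gradient $g\in\mpq$ of $f$ with $\|g\|_{\mpq}\lesssim\|f\|_{\nmpq}$. By the self-improving property of weak Poincar\'e inequalities of Keith--Zhong (used already in Theorem \ref{t1}), there exists $r\in(1,p)$ such that $\mscX$ supports a weak $(1,r)$-Poincar\'e inequality. This $r$ will play the role of the exponent at which we run the telescoping estimate for discrete fractional convolutions.

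Next I would verify the pointwise estimate $M_\alpha^\ast f(x)\lesssim \wz M_\alpha f(x)$ for all $x\in\mscX$. This follows from the doubling condition applied to the bump decomposition $u_r=\sum_i\vz_i\, u_{B(x_i,3r)}$: since $\sum_i\vz_i\equiv 1$ and $\vz_i$ is supported in $B(x_i,6r)$, for $x\in B(x_i,6r)$ the ball $B(x_i,3r)$ is contained in $B(x,9r)$, and doubling yields
\begin{equation*}
r_j^\alpha|f|_{r_j}(x)\lesssim r_j^\alpha\,\frac{1}{\mu(B(x,9r_j))}\int_{B(x,9r_j)}|f(y)|\,d\mu(y)\lesssim \wz M_\alpha f(x).
\end{equation*}
Combining this pointwise bound with Corollary \ref{c6.9} gives the Morrey-norm estimate
$\|M_\alpha^\ast f\|_{\mpsqs}\lesssim\|f\|_{\mpq}\lesssim\|f\|_{\nmpq}$.

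The main step is to exhibit a Haj\l asz gradient of $M_\alpha^\ast f$ of the correct size. Following the construction in \cite[Section 6]{hknt}, I would show that a constant multiple of
\begin{equation*}
h(x):=\lf[\wz M_{\alpha r}(g^r)(x)\r]^{1/r}
\end{equation*}
is a Haj\l asz gradient of $M_\alpha^\ast f$. Indeed, the standard argument combines the weak $(1,r)$-Poincar\'e inequality (used to compare the averages $|f|_{B(x_i,3r_j)}$ with $|f|_{B(y_i,3r_j)}$ at nearby points via an integral of $g^r$) with the fact that each $\vz_i$ is Lipschitz with constant $L/r_j$, which converts the difference of discrete convolutions at scale $r_j$ into an expression involving $r_j^\alpha$ times an average of $g^r$ over a fixed dilate; taking $r$th roots and using the supremum over $j$ produces exactly the operator $\wz M_{\alpha r}$ applied to $g^r$.

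Finally, once $h$ is known to be a Haj\l asz gradient of $M_\alpha^\ast f$, I would estimate its Morrey norm using Corollary \ref{c6.9} with exponents $p/r>1$, $q/r\ge p/r$ and fractional parameter $\alpha r\in[0,(Q/q)r)\subset[0,Qr/q)$. Since $q/r<Q/(\alpha r)$ and $(q/r)^\ast=q^\ast/r$, $(p/r)^\ast=p^\ast/r$, we obtain
\begin{equation*}
\|h\|_{\mpsqs}=\lf\|\wz M_{\alpha r}(g^r)\r\|_{\cm_{p^\ast/r}^{q^\ast/r}(\mscX)}^{1/r}
\lesssim\|g^r\|_{\cm_{p/r}^{q/r}(\mscX)}^{1/r}\approx\|g\|_{\mpq}\lesssim\|f\|_{\nmpq}.
\end{equation*}
Combining this with the Morrey-norm estimate for $M_\alpha^\ast f$ itself and Definition \ref{def hmpu} gives $M_\alpha^\ast f\in HM_{p^\ast}^{q^\ast}(\mscX)$ with the desired norm bound, and Theorem \ref{thm-h=n} (applied at the pair $(p^\ast,q^\ast)$, which again requires only the weak $(1,p)$-Poincar\'e inequality since $p^\ast>p$) transfers this to the Newton-Morrey-Sobolev space, completing the proof. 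The main obstacle is the Haj\l asz-gradient construction in the third step: verifying cleanly that $h$ controls pointwise oscillations of $M_\alpha^\ast f$ requires a careful telescoping chain argument across the scales $r_j$, and it is here that the weak $(1,r)$-Poincar\'e inequality and the partition-of-unity properties of $\{\vz_i\}$ play together.
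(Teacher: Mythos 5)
Your proposal is correct, but it takes a genuinely different route from the paper's. The paper never leaves the Newtonian framework: it picks a $\modpq$-weak upper gradient $g$ of $f$ with $\|g\|_{\mpq}\le2\|f\|_{\nmpq}$, obtains $\|M_\alpha^\ast f\|_{\mpsqs}\ls\|f\|_{\mpq}$ from \cite[Lemma 5.1]{hknt} and \eqref{e6.2}, and then, citing the argument of \cite[Theorem 6.3]{hknt} together with the fact that the pointwise Lipschitz constant of a function is an upper gradient of it, asserts that a constant multiple of $[M_{\alpha\theta}^\ast(g^\theta)]^{1/\theta}$ is a $\modpq$-weak upper gradient of $M_\alpha^\ast f$, where $\theta\in(1,p)$ comes from the Keith--Zhong self-improvement; the concluding Morrey estimate is exactly your exponent arithmetic (which checks out: $(p/r)^\ast=p^\ast/r$, $(q/r)^\ast=q^\ast/r$, and $\alpha r<Q r/q$). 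You instead detour through the Haj\l asz scale: identify $\nmpq=\hmpq$ via Theorem \ref{thm-h=n}, build the Haj\l asz gradient $[\wz M_{\alpha r}(g^r)]^{1/r}$ of $M_\alpha^\ast f$ (the device of Theorem \ref{tf}), and transfer back. What your route buys: the hypotheses of completeness and the Poincar\'e inequality are consumed entirely by Theorem \ref{thm-h=n}, because a Haj\l asz gradient automatically yields a $(1,1)$-Poincar\'e inequality (integrate the pointwise inequality over $B\times B$), so the gradient-transfer step itself needs neither Keith--Zhong nor the assumed Poincar\'e inequality; moreover, for the return trip the unconditional embedding $\hmpsqs\hookrightarrow NM_{p^\ast}^{q^\ast}(\mscX)$ of Theorem \ref{t2} already suffices (your invocation of Theorem \ref{thm-h=n} at $(p^\ast,q^\ast)$, justified by H\"older since $p^\ast\ge p$, is also valid but more than is needed). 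What the paper's route buys: it produces a weak upper gradient directly and is independent of the $\nmpq$--$\hmpq$ identification. Two refinements to your sketch, neither fatal: first, you should not feed the Haj\l asz gradient $g$ into the weak $(1,r)$-Poincar\'e inequality, which is formulated for upper gradients --- either use the free $(1,1)$-Poincar\'e inequality just mentioned, or first pass to the representatives of \cite{jyy}; second, the Haj\l asz-gradient construction for $M_\alpha^\ast$ is the analogue of \cite[Theorem 4.5]{hlnt} (i.e., of Theorem \ref{tf}) rather than of the Newtonian argument in \cite[Section 6]{hknt}, with the Lipschitz bound $L/r_j$ of the partition of unity supplying, when $d(x,y)\le r_j$, the factor $d(x,y)/r_j$ that the annular decay property \eqref{d-adp} supplies for $\wz M_\alpha$, while for $d(x,y)>r_j$ one telescopes up to scale $d(x,y)$ and absorbs the weight $r_j^\alpha\le[d(x,y)]^\alpha$ into $\wz M_\alpha g\le[\wz M_{\alpha r}(g^r)]^{1/r}$; the a.e.-finiteness needed to pass the Haj\l asz inequality through $\sup_j$ follows from your Morrey bound for $M_\alpha^\ast f$. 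Finally, note that you read \eqref{e6.2} with the $Q$-corrected exponents, i.e., via $\wz M_\alpha f\ls M_{\alpha/Q}f$ under \eqref{Q-regular} and Corollary \ref{t6.1} with parameter $\alpha/Q\in[0,1/q)$; this is also how the paper uses \eqref{e6.2} here and in Theorem \ref{tf}, even though Corollary \ref{c6.9} as literally stated carries the targets $p/(1-\alpha q)$ and $q/(1-\alpha q)$, so you inherit, rather than introduce, that bookkeeping discrepancy.
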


\begin{proof}
Let $f\in\nmpq$ and $g\in\mpq$ be a $\modpq$-weak upper gradient of $f$ such that
\begin{equation}\label{g}
\|g\|_{\mpq}\le 2\|f\|_{\nmpq}.
\end{equation}
By \cite[Lemma 5.1]{hknt} and \eqref{e6.2}, we have
\begin{equation}\label{h}
\|M_\alpha^\ast f\|_{\cm_{p^\ast}^{q^\ast}(\mscX)}\ls\|f\|_{\mpq}.
\end{equation}
By the same reason as that used in the
proof \cite[Theorem 6.3]{hknt}, observing that the pointwise Lipschitz
constant of a function is also an upper gradient of that function,
we see that a positive constant multiple of
$(M_{\alpha \theta}^\ast g^{\theta})^{1/{\theta}}$ is a $\modpq$-weak upper gradient of
$M_\alpha^\ast f$, where $\theta$ lies in $(1,p)$ such that the weak
$(1,\theta)$-Poincar\'e inequality is supported by $\mscX$. By $g^{\tz}\in\cm_{p/\theta
}^{q/\theta}(\mscX)$ and $p/\theta>1$, together with \cite[Lemma 5.1]{hknt}
and \eqref{e6.2}, we know that
\begin{equation}\label{i}
\lf\|\lf[M_{\alpha \theta}^\ast(g^{\theta})\r]^{1/\theta}\r\|_{\cm_{p^\ast}^{q^\ast}(\mscX)}
\ls\|g\|_{\mpq}.
\end{equation}
Combining \eqref{g}, \eqref{h} and \eqref{i}, we obtain
\begin{eqnarray*}
\|M_\alpha^\ast f\|_{NM_{p^\ast}^{q^\ast}(\mscX)}
&&\ls\|M_\alpha^\ast f\|_{\cm_{p^\ast}^{q^\ast}(\mscX)}
+\lf\|\lf[M_{\alpha \theta}^\ast(g^{\theta})\r]^{1/\theta}\r\|_{\cm_{p^\ast}^{q^\ast}(\mscX)}\\
&&\ls\|f\|_{\mpq}+\|g\|_{\mpq}\ls\|f\|_{\nmpq},
\end{eqnarray*}
which completes the proof of Theorem \ref{thm-frac-max-bd1}.
\end{proof}

We remark that Theorem \ref{thm-frac-max-bd1} when $p=q$ goes back to
\cite[Theorem 6.3]{hknt}.

\vspace{0.3cm}

\noindent{\bf Acknowledgements.} The authors would like to deeply thank
Professor Pekka Koskela, Professor Nagewari Shanmugalingam
and Dr. Renjin Jiang
for some helpful discussions on the subject of this article, and
Dr. Luk\'a\v{s} Mal\'y for delivering
a copy of his Ph.\,D thesis to us.
The authors would also like to express their deep thanks  to
Professor Yoshihiro Sawano, who generously provides us
the proofs of Lemma \ref{lem-cover} and Theorem \ref{thm-cmk-b>1} of this article,
and to the referee for his/her carefully reading and so many helpful and useful comments
which essentially improves this article.

\bigskip

\noindent Yufeng Lu and Dachun Yang (Corresponding author)

\medskip

\noindent  School of Mathematical Sciences, Beijing Normal University,
Laboratory of Mathematics and Complex Systems, Ministry of
Education, Beijing 100875, People's Republic of China

\smallskip

\noindent {\it E-mails}: \texttt{yufeng.lu@mail.bnu.edu.cn} (Y. Lu)

\hspace{1.14cm}\texttt{dcyang@bnu.edu.cn} (D. Yang)

\bigskip

\noindent  Wen Yuan

\medskip

\noindent  School of Mathematical Sciences, Beijing Normal University,
Laboratory of Mathematics and Complex Systems, Ministry of
Education, Beijing 100875, People's Republic of China

and

\noindent  Mathematisches Institut, Friedrich-Schiller-Universit\"at Jena,
Jena 07743, Germany

\smallskip

\noindent{\it E-mail:} \texttt{wenyuan@bnu.edu.cn}
\end{document}